\newcommand{\pres}[3]{\textnormal{#1} \langle #2 \mid #3 \rangle}
\newcommand{\EDTOL}{\mathbf{EDT0L}}
\providecommand{\customgenericname}{}
\newcommand{\newcustomtheorem}[2]{%
  \newenvironment{#1}[1]
  {%
   \renewcommand\customgenericname{#2}%
   \renewcommand\theinnercustomgeneric{##1}%
   \innercustomgeneric
  }
  {\endinnercustomgeneric}
}
\newtheorem{theorem}{Theorem} 
\newtheorem*{theorem*}{Theorem} 
\newtheorem{lemma}[theorem]{Lemma}     
\newtheorem{corollary}[theorem]{Corollary}
\newtheorem{proposition}[theorem]{Proposition}
\numberwithin{theorem}{section}
\theoremstyle{definition}
\newtheorem{question}{Question}
\newtheorem*{question*}{Question}
\newtheorem{example}{Example}
\newtheorem{remark}{Remark}
\newtheorem{conjecture}{Conjecture}
\newcommand{\raag}{{\textsc{raag}}}
\newcommand{\raags}{{\textsc{raag}\textnormal{s}}}
\newcommand{\rabsag}{{\textsc{rabsag}}}
\newcommand{\rabsags}{{\textsc{rabsag}\textnormal{s}}}
\DeclareMathOperator{\BS}{BS}
\DeclareMathOperator{\NP}{NG}
\begin{document}

\title[]{On the Diophantine Problem in Some One-relator Groups}

\author{Carl-Fredrik Nyberg-Brodda}
\address{Alan Turing Building, Department of Mathematics, University of Manchester, UK.}
\email{carl-fredrik.nybergbrodda@manchester.ac.uk}

\thanks{The author wishes to thank the Dame Kathleen Ollerenshaw Trust, which funded his position as Research Associate at the University of Manchester while this research was carried out.}

\subjclass[2020]{20F05 (primary); 20F10, 20F32, 20M05(secondary)}

\date{\today}


\keywords{}

\begin{abstract}
We study the Diophantine problem, i.e. the decision problem of solving systems of equations, for some families of one-relator groups, and provide some background for why this problem is of interest. The method used is primarily the Reidemeister--Schreier method, together with general recent results by Dahmani \& Guirardel and Ciobanu, Holt \& Rees on the decidability of the Diophantine problem in general classes of groups. First, we give a sample of the methods of the article by proving that the one-relator group with defining relation $a^mb^n = 1$ is virtually a direct product of hyperbolic groups for all $m, n \geq 0$, and thus conclude decidability of the Diophantine problem in such groups. As a corollary, we obtain that the Diophantine problem is decidable in any torus knot group. Second, we study the two-generator, one-relator groups $G_{m,n}$ with defining relation a commutator $[a^m, b^n] = 1$, where $m, n \geq 1$. In doing so, we define and study a natural class of groups (\rabsags{}), related to right-angled Artin groups (\raags{}). We reduce the Diophantine problem in the groups $G_{m,n}$ to the Diophantine problem in groups which are virtually certain \rabsags{}. As a corollary of our methods, we show that the submonoid membership problem is undecidable in the group $G_{2,2}$ with the single defining relation $[a^2, b^2] = 1$. We use the recent classification by Gray \& Howie of \raag{} subgroups of one-relator groups to classify the \raag{} subgroups of some \rabsags{}, showing the potential usefulness of one-relator theory to this area. Finally, we define and study Newman groups $\NP(p,q)$, which are $(p+1)$-generated one-relator groups generalising the solvable Baumslag--Solitar groups. We show that all such groups are hyperbolic, and thereby also conclude decidability of their Diophantine problem. 
\end{abstract}

\maketitle

\noindent One of the main characters of combinatorial group theory in the past century is undoubtedly the class of one-relator groups, i.e. those groups which can be presented with a single defining relation. Arising as the fundamental group of $2$-manifolds and complements of torus knots, one-relator groups were a natural link between the topological origins of combinatorial group theory and its subsequent topology-free developments. For example, the word problem for groups, introduced by Dehn \cite{Dehn1911}, was motivated by the problem of deciding whether a given simple loop on an orientable $2$-manifold $S_g$ (of genus $g > 1$) can be contracted to a point or not -- indeed, this problem is equivalent to the algebraic solution of the word problem in the fundamental group of $S_g$ in question, which Dehn proved is always a one-relator group
\begin{equation}\label{Eq:surface-group}
\pi_1(S_g) \cong \pres{Gp}{a_1, b_1, \dots, a_g, b_g}{[a_1, b_1][a_2, b_2] \cdots [a_g, b_g] = 1},
\end{equation}
where $[a_i, b_i] = a_i b_i a_i^{-1} b_i^{-i}$ denotes the commutator ($\pi_1(S_g)$ is often called a \textit{surface} group). Dehn solved this word problem, and so the first non-trivial (i.e. non-free, non-abelian, infinite) example of groups with decidable word problem were one-relator groups. Ever since, one-relator groups have consistently appeared in any treatment on combinatorial group theory, see e.g. the books by Kurosh \cite[p. 271--272 (Russian)]{Kurosh1953}; Adian \cite[p. 5 (Russian)]{Adian1966}; Magnus, Karrass \& Solitar \cite[Chapter~IV]{Magnus1966}; and Lyndon \& Schupp \cite[II.\S5--6, III.\S9 \& IV.\S5]{Lyndon1977}.\footnote{In 1982, Chandler \& Magnus \cite[p.~121]{Chandler1982} commented ``our knowledge of one-relator groups is by now broad enough to justify a monograph dealing exclusively with this topic''. No such monograph has yet appeared.} In part, this is because they one-relator groups are, from the combinatorial viewpoint afforded by group presentations, one of the most natural classes of non-free groups. However, as we shall see in the present article, one-relator groups are of a much more rich, mysterious, and wondrous nature than their simple presentations may suggest.

The first success in one-relator group theory comes in the form of Magnus' 1930 proof of the \textit{Freiheitssatz} (Ger. ``freeness theorem'') \cite{Magnus1930}, relying on the Reidemeister--Schreier technique in a central way. The theorem shows, in essence, that many subgroups of one-relator groups are free (see \S\ref{Subsec:OR-groups}). Using the theorem, Magnus proved in his next paper \cite[\S4]{Magnus1931} that the word problem is decidable in one-relator groups with defining relator of the form $a^{\alpha_1}b^{\beta_1}a^{\alpha_2}b^{\beta_2} = 1$, for $\alpha_i, \beta_i \in \mathbb{Z}$. Extending this, Magnus' subsequently proved in 1932 that the word problem is decidable in \textit{all} one-relator groups \cite{Magnus1932}. We remark that this staggering success pre-dates by over two decades Novikov's construction \cite{Novikov1955} of the first known group with undecidable word problem.\footnote{Although it is today taken for granted that such undecidable groups exist, this was by no means a certainty for researchers at the time. For example (see \cite[p. 169]{Collins1985}), two announcements were made in the same week at the University of Manchester --  A. Turing announced: \textit{the word problem for groups is undecidable in general}, while B. H. Neumann announced: \textit{the word problem for groups is decidable} [!] \textit{in general}. Even further, \textit{both} authors shortly thereafter retracted their claims! (Turing could rescue his argument in the case of cancellative semigroups, which was later published as \cite{Turing1950}).} 

The theory of one-relator groups, following Magnus' remarkable results, saw somewhat of a dormant phase for the subsequent 30 years. In part, Baumslag thought (see \cite[p. 119]{Chandler1982}, this may have been due to the belief that Magnus had, in a sense, proved all important theorems that could be proved. In the 1960s, and following an important paper by Karrass, Magnus \& Solitar \cite{Karrass1960}, one-relator group theory saw a resurgence, in large part due to Baumslag. For example, in 1962, he and Solitar \cite{Baumslag1962} proved the existence of a non-Hopfian one-relator group (see below), and Baumslag produced a number of papers on the subject \cite{Baumslag1964, Baumslag1967, Baumslag1968, Baumslag1968b, Baumslag1969, Baumslag1971}; see also e.g. \cite{Murasugi1964, Fischer1972} from the same time. One of the key findings of this period was the discovery of a clear dividing line between two distinct classes of one-relator groups: the division of one-relator groups into those with torsion and those which were torsion-free. Coarsely speaking, more and more difficulties and unexpected behaviours appeared in the torsion-free case, while the torsion case (somewhat surprisingly) emerged as in many ways behaving analogously to free groups. In this latter category, the most noteworthy contribution comes from B. B. Newman, a student of Baumslag's, who proved in his Ph.D. thesis \cite{Newman1968} his celebrated ``Spelling Theorem'' (see \cite{NybergBrodda2021a} for the remarkable story behind this theorem). In modern language, this theorem has as corollaries that one-relator groups are (Gromov) hyperbolic and have a decidable conjugacy problem.

Generalising the conjugacy problem for groups is the \textit{Diophantine problem}, which is the second main character of the present article. Let $G$ be a group finitely generated (as a monoid) by a set $A$, and let $X$ be a finite set of formal variables. An \textit{equation} over $G$ is a word over $(A \cup X)^\ast$, and a \textit{system} of equations is a finite set of equations $w_i$. A \textit{solution} $\sigma$ to a system of equations $w_1, w_2, \dots, w_n$ is a homomorphism $\sigma \colon (A \cup X)^\ast \to G$ with the property that (1) $\sigma(w_i) = 1$ for all $1 \leq i \leq n$, and (2) $\sigma(a) = a$ for all $a \in A$. That is, informally speaking, given
\begin{equation}\label{Eq:Sample-eq}
XgXhYg^{-1}X^{-1}h^{-1}Y^{-1}
\end{equation}
where $g, h$ are words in the generators of $G$, and $X, Y$ are variables, then solving this equation simply means finding values in $G$ for $X$ and $Y$ which makes the expression \eqref{Eq:Sample-eq} equal to $1$ in $G$. Thus, for example, solving a system of equations over $\mathbb{Z}^n$ is just ordinary linear algebra, and solving systems of equations over finite groups can be done by trial and error. The Diophantine problem is the decision problem which takes as input a system of equations, and outputs a solution $\sigma$ if one exists, and otherwise outputs that no solution exists. As a particular case, if the Diophantine problem is solvable in a group $G$, then we can solve the equation $XgX^{-1}h^{-1} = 1$ for every pair of words $g, h$ in the generating set of $G$; that is, we can solve the conjugacy problem. We remark that there is an obvious generalisation of the Diophantine problem to semigroups and monoids by using pairs of words instead of a single word.

Recently, the Diophantine problem has seen a good deal of study in various classes of groups (see e.g. \cite{Ciobanu2016, Evetts2022, Kharlampovich2020, Garreta2020, Garreta2021}). Most notably, we can trace the following line of development. The Diophantine problem, though initially suspected to be undecidable even for free monoids (cf. e.g. \cite{Matiyasevich1968}), was proved to be decidable for all free monoids in a remarkable paper by Makanin \cite{Makanin1977}. Five years later, Makanin proved the decidability of the problem also in free groups \cite{Makanin1982}. Razborov (also a student of S. I. Adian) later simplified this proof and built the theory of the so-called \textit{Makanin--Razborov diagrams}. The theory which grew out of this work became crucial in the work by Kharlampovich \& Myasnikov (e.g. \cite{Kharlampovich2006}) and Sela (e.g. \cite{Sela2001}) in solving the Tarski problems (far beyond the scope of this article). Via this, recently Dahmani \& Guirardel \cite{Dahmani2010} proved that the Diophantine problem is decidable in all hyperbolic groups. We remark that this result, and more generally the Diophantine problem in groups, has been analysed from the point of view of formal language theory (e.g. $\EDTOL$ languages), see e.g. \cite{Ciobanu2016, Ciobanu2020, Ciobanu2021, Evetts2022}

Thus, in passing from free groups to hyperbolic groups, much work has been done. However, given the natural step from free groups to one-relator groups, comparatively little work has been done to directly attack the following problem:

\begin{question*}
Is the Diophantine problem decidable in every one-relator group?
\end{question*}

The goal of this present article is to demonstrate that this problem can be connected with some interesting new classes of groups, and that many techniques from classical one-relator group theory can be applied to attack this question. As mentioned above, every one-relator group with torsion is hyperbolic as a consequence of the B. B. Newman Spelling Theorem, and hence every such group has decidable Diophantine problem by \cite{Dahmani2010}. Thus, the above question is really a question about torsion-free one-relator groups. In general, even the conjugacy problem (a very particular case of the Diophantine problem) remains open for this class.

Hence, one theme of this present article, in pursuit of partial solutions to the above question, will be an investigation of the subgroup (and, to some extent, submonoid) structure of some one-relator groups. One recently classified class of subgroups of one-relator groups is the following. For any undirected graph $\Gamma$, we may define a \textit{right-angled Artin group} $A(\Gamma)$, which, loosely speaking, have defining relations encoding the edges of the graph $\Gamma$ (see \S\ref{Subsec:RAAG-intro} for details). Right-angled Artin groups, or \raags{} (see \S\ref{Subsec:RAAG-intro}), form a very rich class of groups, particularly due to their subgroup structure and wide applicability. In 2021, as part of his proof of the undecidability of the word problem for one-relator inverse monoids, Gray \cite{Gray2020} gave a classification of the right-angled Artin subgroups of one-relator groups. This classification says that if $\Gamma$ is a graph, then $A(\Gamma)$ embeds in some one-relator group only if $\Gamma$ is a finite forest. Furthermore, if $\Gamma$ is a finite forest, then there exists a fixed one-relator group which contains $A(\Gamma)$.

The author became interested in understanding which one-relator groups are \textit{virtually} right-angled Artin groups after looking at the Diophantine problem in Baumslag-Solitar groups, i.e. the family of groups
\[
\BS(m,n) = \pres{Gp}{a,b}{ba^mb^{-1} = a^n}
\]
where $m, n \in \mathbb{Z}$. These groups, which were introduced in \cite{Baumslag1962}, have, despite their deceptively simple appearance, a number of devious properties -- for example, $\BS(2,3)$ is not Hopfian, and was the first one-relator group known to have this property. The Diophantine problem was recently proved decidable in the (metabelian) groups $\BS(1,n)$, i.e. when $m=1$, see \cite{Kharlampovich2020}, and, with one class of exceptions, remains open for $\BS(m,n)$ in general. The class of exceptions is the class of ``unimodular'' groups $\BS(m,m)$ (as well as $\BS(m,-m)$), and this observation was the starting point for the present investigation. The argument for the decidability of the Diophantine problem in $\BS(m,m)$ is as follows. The group $\BS(m,m)$ has an index $m$ subgroup isomorphic to $F_m \times \mathbb{Z}$. As $F_m \times \mathbb{Z}$ is a direct product of finitely many hyperbolic groups, $\BS(m,m)$ is virtually such a direct product; any group in this latter class has decidable Diophantine problem by a recent result due to Ciobanu, Holt \& Rees \cite{Ciobanu2020}. 

In this article, we will generalise this result for $\BS(m,m)$ in various ways, primarily by using the Reidemeister--Schreier method for computing a presentation for subgroups. First, in \S\ref{Sec:Warmup-torus-knots}, we shall prove as a warm-up that $\pres{Gp}{a,b}{a^mb^n=1}$ is virtually $F_{mn} \times \mathbb{Z}$ for all $m, n \geq 2$, generalising a result of Katayama \cite{Katayama2017}, and hence find the following result: 

\begin{customthm}{\ref{Thm:warmup-am-bn-diophantine}}
For every $m, n \geq 0$ the group $\pres{Gp}{a,b}{a^mb^n=1}$ has decidable Diophantine problem. In particular, the Diophantine problem is decidable in the fundamental group of the complement of any torus knot.
\end{customthm}

Following this, in \S\ref{Sec:Moldavanskii-sec} we will study the Diophantine problem in the Moldavanskii-Tieudjo groups $G_{m,n} = \pres{Gp}{a,b}{[a^m, b^n]=1}$ where $m, n \geq 0$. To do this, we will first in \S\ref{Subsec:RABSAG-intro} introduce a new class of groups: right-angled Baumslag-Solitar-Artin groups $B(\Gamma)$, or \rabsags{} for short. These are defined in a manner similar to right-angled Artin groups, but are instead defined with respect to a directed $\mathbb{Z}$-labelled graph $\Gamma$. We will prove that \rabsags{} can behave very exotically compared to \raags, especially with respect to the submonoid (\S\ref{Subsec:SMMP}) and subgroup (\S\ref{Subsec:SGMP}) membership problems. We conjecture (but do not prove) that the Diophantine problem is decidable in all \rabsags{}, just like it is for all \raags. We study a certain class of \rabsags{} $B(S_{k,\ell})$, defined by an underlying ``star-like'' graph $S_{k,\ell}$, and characterise precisely when the submonoid membership and rational subset membership problems are decidable in such groups. These groups appears to be similarly well-behaved to the \raags{} $F_k \times \mathbb{Z}$. For this reason, we conjecture (see Conjecture~\ref{Conj:all_rabsag_virtually_star_dec_DP}) that, analogous to the recent result by Ciobanu, Holt \& Rees \cite{Ciobanu2020}, the Diophantine problem is decidable in any group that is virtually $B(S_{k,\ell})$. 

Applying this theory to one-relator groups, we manage to prove:

\begin{customthm}{\ref{Thm:[am,bn]-contains-BS-star}}
Let $G_{m,n} = \pres{Gp}{a,b}{[a^m, b^n] = 1}$ with $m, n \geq 1$. Then $G_{m,n}$ contains an index $m$ normal subgroup isomorphic to $B(S_{m,n})$.
\end{customthm}

Consequently, we find that if Conjecture~\ref{Conj:all_rabsag_virtually_star_dec_DP} is true, then $G_{m,n}$ has decidable Diophantine problem for all $m,n \geq 1$. We are able to use the above Theorem~\ref{Thm:[am,bn]-contains-BS-star} for two applications. In the first case, we are able to use our earlier results on membership problems in \raags{} to show:

\begin{customcor}{\ref{Cor:Gmn-has-undec-SMMP}}
The submonoid membership problem is undecidable in the one-relator group
\[
G_{m,n} = \pres{Gp}{a,b}{[a^m, b^n] = 1}
\]
whenever $m \geq 2$ and $n \geq 2$. 
\end{customcor}

In particular, the submonoid membership problem is undecidable in the one-relator group $G_{2,2} = \pres{Gp}{a,b}{[a^2, b^2] = 1}$. This produces a simpler example of a one-relator group with this property than that due to Gray \cite{Gray2020}. Furthermore, unlike that example, the groups $G_{m,n}$ have an additional structural property, yielding:

\begin{customcor}{\ref{Cor:FP-with-commuting-can-be-undec}}
There exists a free product $G$ of two infinite cyclic groups with commuting subgroups such that the submonoid membership problem in $G$ is undecidable. 
\end{customcor}

In \S\ref{Subsec:applying-MT-to-rabsags}, we apply some results on one-relator groups to characterise the right-angled Artin subgroups of a certain class of \rabsags{}.

Finally, in \S\ref{Sec:Torsion-subgroups}, we find embeddings of certain torsion-free one-relator groups into some one-relator groups with torsion, and thereby produce new classes of torsion-free one-relator groups with decidable Diophantine problem. In particular, we study two classes of generalisations of the solvable Baumslag--Solitar groups $\BS(1,k)$. First, generalising an example due to B. B. Newman, we define for $p, q \geq 1$ the \textit{Newman groups}
\begin{equation}
\NP(p,q) = \pres{Gp}{y, x_1, x_2, \dots, x_p}{y = (y^q)^{x_1} (y^q)^{x_2} \cdots (y^q)^{x_p}}\tag{\ref{Eq:NewmanPridegroup-def}}
\end{equation}
Here, we use the convention that $x^y = yxy^{-1}$. As $\BS(1,k) = \NP(1,k)$, this class of groups indeed generalises the solvable Baumslag--Solitar groups. We further define a class one-relator groups which torsion, which we call \textit{inflated} Baumslag--Solitar groups. These are defined in the same way as $\BS(m,n)$, but with the defining relator a power of the relator of $\BS(m,n)$. We prove (Proposition~\ref{Prop:Newman-group-in-inflated}) that $\NP(p,q)$ embeds in $\BS^{(p)}(1,pq)$ as an index $p$ subgroup. We hence obtain:

\begin{customcor}{\ref{Cor:NP-have-dec-DP}}
For every $p, q \geq 1$, the Diophantine problem is decidable in the group
\[
\NP(p,q) = \pres{Gp}{x_1, x_2, \dots, x_n}{x_1 = x_1^{x_2} x_1^{x_3} \cdots x_1^{x_n}}.
\]
\end{customcor}

Finally, we remark that the present article is not intended as a comprehensive treatment of the full extent of the methods herein, but rather as a way of showcasing the usefulness of the Reidemeister--Schreier method, as well as the rich depth of one-relator group theory in its connections to right-angled Artin groups and generalisations. If the article makes the reader already interested in the Diophantine problem more interested in the subject of one-relator group theory, or vice versa, then it will have fulfilled its primary goal. 

\section{Preliminaries}

\noindent We assume the reader is reader with the basics of group presentations. The textbooks \cite{Magnus1966, Lyndon1977} and the monographs \cite{Adian1966, Baumslag1993} are all excellent places for learning about this subject. We also assume some familiarity with concepts such as hyperbolic groups and algorithmic problems in groups. All groups in this article, if not explicitly stated otherwise, will be assumed to be finitely presented. The notation used for a group $G$ presented with generating set $A$ and defining relations $R$ will be $\pres{Gp}{A}{R}$. The free group on $A$ will be denoted $F_A$. The words \textit{reduced word, cyclically reduced word, cyclic conjugate of a word}, etc. are all used in the usual sense. Conjugation is sometimes written $x^y = yxy^{-1}$, and we use the convention $[x,y] = xyx^{-1}y^{-1}$ for the commutator. 

\subsection{The Reidemeister--Schreier Method}\label{Subsec:Reidemeister}

The Reidemeister-Schreier method dates back to the 1927 papers by Reidemeister \cite{Reidemeister1927} and Schreier \cite{Schreier1927}.\footnote{This latter paper is noteworthy to semigroup theorists, as a key method therein was used already by Hoyer \cite{Hoyer1902} in 1902, who derived the famous Schreier index formula; the method used by Hoyer, remarkably, goes via left (but not right) cancellative semigroups. See \cite[p.48--49]{Chandler1982}.} We only give brief details of this method, borrowing from \cite{Baumslag2008} (an article which served, in part, as direct inspiration of the present article). More details and plenty of examples can be found in e.g. \cite[Chapter~III]{Baumslag1993}. 

Let $A$ be a finite set, and $F_A$ be the free group with basis $A$. Let $G = \pres{Gp}{A}{R_i = 1 \:(i \in I)}$ be a finitely presented group. Let $H$ be subgroup of $G$, generated by the finitely many words $\{ h_1,h_2, \dots, h_k \}$. Let $T$ be a right transversal for $H$ in $G$ which is closed under taking prefixes. That is, we require the following of $T$:
\begin{enumerate}
\item from each coset $Hg$ of $H$ in $G$, there is exactly one element, which we denote by $\overline{g}$, in $T$.
\item if $w \equiv a_1^{\varepsilon_1} a_2^{\varepsilon_2} \cdots a_\ell^{\varepsilon_\ell} \in T$, where $w$ is reduced and $a_i \in A, \varepsilon_i = \pm 1$, then $a_1^{\varepsilon_1} a_2^{\varepsilon_2} \cdots a_k^{\varepsilon_k}$ for every $k \leq \ell$. 
\end{enumerate}
If $T$ satisfies (1) and (2), then we say that $T$ is a \textit{Schreier transversal} for $H$ in $G$, and the element $\overline{g} \in T$ will be called the \textit{Schreier representative} for the coset $Hg$. 

For every $t \in T$ and $a \in A$, we define a set of generators:
\[
s(t, a) = ta \overline{ta}^{-1} \in F_A,
\]
and it is not difficult to prove (as Reidemeister and Schreier did, more or less explicitly), that 
\[
S = \{ s(t,a) \mid t \in T \text{ and } a \in A \}
\]
is a generating set for $H$. Furthermore, we can find a presentation for $H$ in terms of these generators, which we will now do. 

Let $\Sigma$ be a new set of symbols, in one-to-one correspondence with $S$ via $s(t,x) \mapsto \sigma(t,x)$. Then the \textit{rewriting mapping} $\tau \colon F_A \to F_\Sigma$ will be defined as follows.

Let $w = a_1^{\varepsilon_1} a_2^{\varepsilon_2} \cdots a_\ell^{\varepsilon_\ell}$, where $a_i \in A$ and $\varepsilon_i = \pm 1$. Let $w_i$ denote the $i$th prefix of $w$, i.e. the word defined by $w_0 = 1$, and $w_i = a_1^{\varepsilon_1} a_2^{\varepsilon_2} \cdots a_i^{\varepsilon_i}$ for $i>0$. Then $\tau$ is defined by replacing in $w$ each term $a_i^{\varepsilon_i}$ by 
\[
a_i^{\varepsilon_i} \mapsto
\begin{cases*}
\sigma(\overline{w_{i-1}}, a_i) & if $e_i = 1$, \\
\sigma(\overline{w_{i}}, a_i)^{-1} & if $e_i = 1$,
\end{cases*}
\]
Thus $\tau(w)$ will be a word in $F_\Sigma$. Any $F_X$-word representing an element of $H$ can be rewritten by $\tau$. The following useful elementary properties now hold:

\begin{lemma}
Let $T$ be a Schreier transversal for a subgroup $H$ of $G = \pres{Gp}{A}{R_i=1 \: (i \in I)}$. Then for every $w \in F_A$, $t, t' \in T$ and $a, a' \in A$:

\begin{enumerate}
\item $\overline{wx} = \overline{\overline{w}x}$.
\item $\sigma(t,a)=1$ in $F_A$ if and only if $ta \in T$. In particular, $\tau(s(t,a)) = \sigma(t,a)$.
\item If neither $\sigma(t,a)$ nor $\sigma(t',a')$ equal the identity of $F_A$, then $\sigma(t,a) = \sigma(t',a')$ only if 
\[
t = t' \quad \text{and} \quad x = x'.
\]
\end{enumerate}
\end{lemma}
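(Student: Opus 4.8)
The three assertions all rest on the single structural feature of a Schreier transversal: that $T$ is closed under taking prefixes, and hence that the empty word is the representative of the trivial coset $H$. Parts (1) and (2) are essentially definitional, so I would dispatch them first and reserve the real work for part (3). For (1), the point is that $\overline{w}$ depends only on the coset $Hw$: since $Hw = H\overline{w}$ by definition of the representative, right-multiplication by $x$ gives $Hwx = H\overline{w}x$, and uniqueness of the representative in $T$ yields $\overline{wx} = \overline{\overline{w}x}$. For (2), unwinding $s(t,a) = ta\,\overline{ta}^{-1}$ shows this equals $1$ in $F_A$ exactly when the reduced word $ta$ coincides with $\overline{ta}$, i.e. when $ta$ is itself the representative of its coset, i.e. when $ta \in T$. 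For the displayed consequence $\tau(s(t,a)) = \sigma(t,a)$, I would feed the word $ta\,\overline{ta}^{-1}$ into the rewriting map one letter at a time: because $T$ is prefix-closed, every prefix of $t$ and every prefix of $\overline{ta}$ lies in $T$, so by the first half of (2) each letter coming from the block $t$ or from the block $\overline{ta}^{-1}$ rewrites to the identity of $F_\Sigma$, while the single central letter $a$, read off the prefix $t=\overline{t}$, contributes exactly $\sigma(\overline{t},a)=\sigma(t,a)$.

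Part (3) is the only assertion needing genuine combinatorics, and I would organise it in three steps. First, a nontrivial generator forces $ta$ to be freely reduced and to lie outside $T$: if $t$ ended in $a^{-1}$, then writing $t=t_0a^{-1}$ its prefix $t_0$ lies in $T$, so $ta=t_0\in T$ and $s(t,a)=1$; likewise $ta\in T$ gives $s(t,a)=1$ directly. Second, and this is the crux, I would show that for nontrivial $s(t,a)$ the product $ta\,\overline{ta}^{-1}$ is \emph{already freely reduced}. Let $\gamma$ be the longest common terminal segment of the reduced words $ta$ and $r:=\overline{ta}$, and suppose $\gamma\neq\emptyset$. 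Its last letter is the last letter $a$ of $ta$, so $r$ ends in $a$; deleting that final $a$ produces a prefix $r_-$ of $r$, so $r_-\in T$ by prefix-closure, and $Hr_- = Hra^{-1} = Ht$, whence uniqueness of the representative of $Ht$ forces $r_-=t$. Performing the same deletion on $ta$ also yields $t$, and comparing the two resulting expressions collapses $\gamma$ and gives $s(t,a)=1$, a contradiction. Hence $\gamma=\emptyset$, no cancellation occurs, and $ta$ appears as an honest prefix of the reduced word $s(t,a)$. Third, suppose $s(t,a)=s(t',a')$ with both nontrivial; then the two reduced expressions are literally the same reduced word $w$, on which $ta$ and $t'a'$ occur as prefixes of lengths $|t|+1$ and $|t'|+1$. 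Assuming $|t|\leq|t'|$, the shorter prefix $ta$ is a prefix of $t'a'$; if the lengths agree then $ta=t'a'$ and we read off $t=t'$ and $a=a'$, while if $|t|<|t'|$ then $ta$ is a prefix of $t'$, hence $ta\in T$ by prefix-closure, contradicting Step~1.

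The main obstacle is the second step of part (3): controlling the free cancellation in $ta\,\overline{ta}^{-1}$. Everything else is bookkeeping, but this no-cancellation lemma is precisely where the Schreier hypothesis does its work, through the observation that stripping the last letter of a transversal element again lands inside $T$, combined with the uniqueness of coset representatives. Once cancellation has been ruled out, the injectivity in Step~3 reduces to a one-line comparison of prefixes.
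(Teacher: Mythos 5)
Your proposal cannot be compared against a proof in the paper for a simple reason: the paper does not prove this lemma. It is stated bare, as a list of ``useful elementary properties'' of Schreier transversals, with the surrounding text only attributing the circle of ideas to Reidemeister and Schreier. Judged on its own terms, your argument is correct and complete, and it is the standard one. Parts (1) and (2) are, as you say, definitional: (1) follows since $\overline{\,\cdot\,}$ depends only on the coset, and (2) from unwinding $s(t,a)=ta\,\overline{ta}^{-1}$ together with prefix-closure. Your key step for (3) --- that $ta\,\overline{ta}^{-1}$ is freely reduced as written whenever $s(t,a)\neq 1$, proved by stripping the last letter of $\overline{ta}$, landing in $T$ by prefix-closure, and invoking uniqueness of coset representatives --- is exactly the no-cancellation lemma that underlies the classical treatment of Schreier generators (it is the same mechanism that makes the Schreier generators a Nielsen-reduced basis in the Nielsen--Schreier theorem), and your prefix comparison in Step 3 correctly finishes the injectivity. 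Two remarks, neither a gap. First, the lemma as printed conflates $s(t,a)$ with $\sigma(t,a)$: the hypothesis ``$\sigma(t,a)=1$ in $F_A$'' can only mean ``$s(t,a)=1$ in $F_A$'', since a generator of $F_\Sigma$ is never trivial there (as the paper notes immediately after the statement); you silently adopt the correct reading, which is the right thing to do. Second, in your proof that $\tau(s(t,a))=\sigma(t,a)$, the letters coming from the blocks $t$ and $\overline{ta}^{-1}$ do not literally ``rewrite to the identity of $F_\Sigma$'': they rewrite to symbols lying in $\Sigma_1$, which are then deleted (set equal to $1$) in the presentation \eqref{Eq:RM-method}. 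The identity $\tau(s(t,a))=\sigma(t,a)$ only holds modulo that identification, which is precisely the notational convention the paper announces it will use, so this is a matter of phrasing rather than substance.
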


Even if the word $s(t, a)$ is the identity in $F_A$, the symbol $\sigma(t,a)$ is of course never the identity in $F_\Sigma$. Let therefore
\[
\Sigma_1 = \{ \sigma(t,x) \mid s(t,x) = 1 \text{ in $F_X$}\} \subseteq \Sigma.
\]
The Reidemeister--Schreier method gives a presentation for $H$ as follows:
\begin{equation}\label{Eq:RM-method}
H \cong \pres{Gp}{\Sigma - \Sigma_1}{\tau(tR_it^{-1}) = 1 \: (i \in I, t \in T)}
\end{equation}

There is a shortcut to the often rather laborious task of rewriting $\tau(tR_it^{-1}$, which will be useful quite often in the present article. Let $w \in F_A$ be such that $w$ represents an element of $H$. Then, by the above properties, it follows that one can find some Schreier generators $s(t_1,a_1), s(t_2,a_2), \dots, s(t_k,a_k) \in S$ such that 
\[
w = s(t_1,a_1) s(t_2,a_2) \cdots s(t_k,a_k)
\]
where the equality is in $F_A$. Then $\tau(w) = \sigma(t_1,a_1) \sigma(t_2,a_2) \cdots \sigma(t_k,a_k)$. 

Note that if $[G : H] < \infty$, i.e. if $H$ has finite index in $G$, then $|T|$ is finite, so the presentation \eqref{Eq:RM-method} is a finite presentation for $H$. That is, every subgroup of finite index in a finitely presented group is itself finitely presented. Furthermore, given a transversal $T$ for $H$ in $G$, then for any $w \in F_A$ it is straightforward to compute $\overline{w}$. 

Finally, we remark that we will often, for ease of notation, be somewhat sloppy in distinguishing elements of $F_A$ and $F_\Sigma$. For example, if $\tau(s(t,x)) = \sigma(t,x)$, then we may write this as ``let $\sigma(t,x)$ denote $s(t,x)$''. Context will always make our choices clear.

\subsection{Right-angled Artin groups}\label{Subsec:RAAG-intro}

A group is said to be a \textit{right-angled Artin group} if it can be constructed in the following way: let $\Gamma$ be an undirected graph with vertices $x_1, x_2, \dots, x_n$, and edges $E$. Then the right-angled Artin group $A(\Gamma)$ defined by $\Gamma$ is the group given by a presentation with generators $x_1, x_2, \dots, x_n$ and defining relations $[x_i, x_j] = 1$ whenever $(x_i, x_j) \in E$. Thus, for example, if $K_n, E_n$ are the complete resp. the empty graph on $n$ vertices, then $A(K_n) \cong \mathbb{Z}^n$ and $A(E_n) \cong F_n$, the free group on $n$ generators. For various $\Gamma$, the group $A(\Gamma)$ is thus ``intermediately'' or ``partially'' commutative.

Right-angled Artin groups were introduced in 1981 by Baudisch \cite{Baudisch1981} under the name of \textit{semifree} groups. In subsequent years, right-angled Artin groups (or \raags) have seen a great deal of study; their importance seems to derive mainly from the fact that their subgroup structure has been found to be incredibly rich. For instance, the class of \textit{special} groups defined by Haglund \& Wise \cite{Haglund2008} all embed in \raags, and this class of groups is crucial in the recent resolution \cite[\S16.1]{Wise2012} to Baumslag's conjecture that all one-relator groups with torsion are residually finite. Additionally, \raags{} play a r\^ole in Gray's recent proof (see \cite{Gray2020}) that the word problem in one-relator inverse monoids can be undecidable (see also \S\ref{Subsec:SMMP}).

\subsection{Subgroups of one-relator groups}\label{Subsec:OR-groups}

Subgroups of one-relator groups is one of the central topics of this article. A first natural thought may be that \textit{all subgroups of one-relator groups are themselves one-relator groups}, by analogy with the fact that any subgroup of a zero-relator group (i.e. a free group) is again a zero-relator group, which is the Nielsen-Schreier theorem \cite{Nielsen1921, Schreier1927}. However, this is not the case. For example, let $G = \pres{Gp}{a,b}{a^2=1}$. Then $G \cong \langle b \rangle \ast C_2 \cong \mathbb{Z} \ast C_2$. By the Kurosh subgroup theorem, for every $k \geq 0$ the group generated by all $b^i a b^{-i}$ with $0 \leq i \leq k$ is isomorphic to the free product of $k+1$ copies of $C_2$. Clearly this product cannot be defined using fewer than $k+1$ defining relations. Though this example shows that not every one-relator group is \textit{locally} a one-relator group (i.e. every finitely generated subgroup is one-relator), there are some examples of when this does occur. The most prominent example, other than free groups, of locally one-relator groups are surface groups: it is not hard to show via topological methods that every finitely generated subgroup of \eqref{Eq:surface-group} is either free or again a group of type \eqref{Eq:surface-group} (with some genus $g' > 1$). Outside of one-relator subgroups of one-relator groups, we have the following recent result:

\begin{lemma}[Howie \cite{Gray2020}]\label{Lem:Howie'sLemma}
Let $G = \pres{Gp}{A}{w=1}$ be a one-relator group, and let $\Gamma$ be a graph. Then $G$ contains the right-angled Artin group $A(\Gamma)$ as a subgroup only if $\Gamma$ is a forest. 
\end{lemma}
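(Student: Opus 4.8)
The plan is to prove the contrapositive: if $\Gamma$ is not a forest, then $A(\Gamma)$ embeds in no one-relator group. A graph that is not a forest contains a cycle, and hence contains a chordless (induced) cycle $C_n$ for some $n \geq 3$. Since the inclusion of a full subgraph induces an embedding of the associated right-angled Artin groups, $A(C_n)$ embeds in $A(\Gamma)$; thus it suffices to show that $A(C_n)$ embeds in no one-relator group $G$, for any $n \geq 3$. I first record the features of these groups that will be used throughout: every edge of $C_n$ yields a pair of commuting generators, so $A(C_n)$ contains $\mathbb{Z}^2$; for $n \geq 4$ two non-adjacent generators generate a free group of rank two, so $A(C_n)$ also contains $F_2$ (indeed $A(C_4) \cong F_2 \times F_2$); and for $n = 3$ the group is $\mathbb{Z}^3$.

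First I would dispose of the case where the defining relator is a proper power, so that $G$ has torsion. By the B. B. Newman Spelling Theorem $G$ is then hyperbolic, and hyperbolic groups contain no copy of $\mathbb{Z}^2$; since $A(C_n) \supseteq \mathbb{Z}^2$, this case is immediate. The substantial case is that of a torsion-free one-relator group, where I would run an induction on the length of the relator via the Magnus--Moldavanskii hierarchy: such a $G$ is either a small base group (a free group, or a one-relator group of minimal relator length such as $\mathbb{Z}^2$, the Klein bottle group, or a solvable $\BS(1,k)$), or it splits as an HNN extension or amalgam over free Magnus subgroups whose base is a one-relator group with strictly shorter relator. In each terminal case the group is either hyperbolic (a free group or a genus $\geq 2$ surface group, hence with no $\mathbb{Z}^2$) or contains no non-abelian free subgroup and no $\mathbb{Z}^3$ (the metabelian and $\mathbb{Z}^2$-like cases); since $A(C_n)$ contains $\mathbb{Z}^2$ together with $F_2$ when $n \geq 4$, and $\mathbb{Z}^3$ when $n = 3$, none of these can contain $A(C_n)$.

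For the inductive step I would let $G$ act on the Bass--Serre tree $T$ of its Magnus splitting. The edge stabilizers are conjugates of free Magnus subgroups, so they contain no $\mathbb{Z}^2$; consequently every $\mathbb{Z}^2 \leq A(C_n)$ — in particular each $\langle x_i, x_{i+1}\rangle$ — fixes a vertex of $T$, so each generator $x_i$ is elliptic and, for consecutive indices, $\mathrm{Fix}(x_i) \cap \mathrm{Fix}(x_{i+1}) \neq \emptyset$. If I can show that the whole of $A(C_n)$ fixes a vertex, then it is conjugate into the base group, which is a one-relator group with shorter relator, and the induction closes. To force a common fixed point I would invoke acylindricity of the Magnus splitting, which follows from malnormality of the Magnus subgroups and bounds the diameter of the fixed subtree of each $x_i$; in the special case $n = 4$, where $A(C_4) \cong F_2 \times F_2$, one argues that one free factor is elliptic and that the second factor, commuting with it, preserves and hence fixes the centre of the (bounded) fixed subtree of the first.

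The hard part will be exactly this last step for a general cycle. The fixed subtrees $\mathrm{Fix}(x_1), \dots, \mathrm{Fix}(x_n)$ form a cyclic chain in which only consecutive members are known to meet, while non-adjacent generators genuinely generate free groups whose fixed sets may be disjoint; this is precisely a configuration of subtrees failing the Helly hypothesis, so a common fixed point does not follow formally. Indeed, $A(C_4) \cong F_2 \times F_2$ does split over a free subgroup (into two copies of $\mathbb{Z} \times F_2$), which shows that one cannot simply argue that $A(C_n)$ admits no splitting over a free group. The crux is therefore to exploit the precise structure of the Magnus splitting — its acylindricity and the malnormality of its edge groups — to rule out a nontrivial, cyclically patterned action of $A(C_n)$ on $T$, and thereby secure the common fixed vertex needed to descend the hierarchy.
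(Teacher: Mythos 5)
The paper itself offers no proof of Lemma~\ref{Lem:Howie'sLemma}: it is quoted as a theorem of Howie via \cite{Gray2020} and used as a black box, so your attempt has to stand on its own. Your preliminary reductions are correct --- a non-forest contains an induced cycle $C_n$ ($n\geq 3$), induced subgraphs give embeddings of the corresponding \raag{}s, the torsion case follows from hyperbolicity since $A(C_n)$ contains $\mathbb{Z}\times\mathbb{Z}$, and in the torsion-free case every $\mathbb{Z}\times\mathbb{Z}$ inside $A(C_n)$ is elliptic in the Bass--Serre tree of a Magnus--Moldavanskii splitting because the edge groups are free. But all of that is the easy part, and the proposal stops exactly where the theorem begins: the promotion of ellipticity of the consecutive pairs $\langle x_i,x_{i+1}\rangle$ to a global fixed point for all of $A(C_n)$. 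As your own observation $F_2\times F_2\cong(\mathbb{Z}\times F_2)\ast_{F_2}(\mathbb{Z}\times F_2)$ shows, no soft Helly-type or ``no splitting over a free group'' argument can supply this, so this single step carries the entire weight of the lemma; a write-up that ends by naming it as ``the crux'' is an outline, not a proof.

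Moreover, the tools you nominate to close the gap --- malnormality of Magnus subgroups and acylindricity of the Magnus splitting --- are simply false in general, and the counterexamples are the very groups studied in this paper. In $G_{m,n}=\pres{Gp}{a,b}{[a^m,b^n]=1}$ the Magnus subgroup $\langle a\rangle$ satisfies $b^n\langle a\rangle b^{-n}\cap\langle a\rangle\supseteq\langle a^m\rangle$ although $b^n\notin\langle a\rangle$, so malnormality fails. Worse, in $\BS(m,m)=G_{m,1}$, whose Moldavanskii splitting is the HNN extension of $T_{m,m}=\pres{Gp}{a_0,a_1}{a_0^m=a_1^m}$ with edge group $\langle a_0\rangle$ and stable letter $b$, the element $a_0^m=a^m$ is central in the whole group and lies in the edge group; being central, it lies in \emph{every} conjugate of the edge group, hence fixes every edge of the Bass--Serre tree. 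A single elliptic element whose fixed subtree is the entire (unbounded) tree rules out any acylindricity constant whatsoever. So precisely for the one-relator groups with centre appearing in \S\ref{Sec:Warmup-torus-knots} and \S\ref{Sec:Moldavanskii-sec}, your key hypothesis fails outright. What is actually available in the literature is far weaker: by Collins' theorem, intersections of (conjugates of) Magnus subgroups equal the span of the common generators up to at most one exceptional cyclic factor, and any genuine proof --- including the one cited --- has to be engineered around these exceptional intersections and around the groups with centre. That is a missing idea, not a missing verification, so the proposal does not constitute a proof of the lemma.
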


Recall (see e.g. \cite{Magnus1966}) that a one-relator group has torsion if and only if its presentation is of the form $\pres{Gp}{A}{w^n = 1}$ for some $n>1$ with $w$ non-empty and cyclically reduced. If $G$ is a one-relator group with torsion, then the only right-angled Artin groups contained in $G$ are free, as $G$ is hyperbolic (see below). Thus, Lemma~\ref{Lem:Howie'sLemma} has content only in the case of $G$ being torsion-free (in which case it has rather a good deal of content). The abelian subgroups of one-relator groups \cite{Newman1968b} and the solvable subgroups of one-relator groups with torsion \cite{Newman1973} have also been classified. 

Perhaps the most striking theorem about subgroups of one-relator groups is Magnus' \textit{Freiheitssatz}, proved in 1930. This states the following: let $G = \pres{Gp}{a,b,c, \dots}{w=1}$ where $w$ is cyclically reduced, and $a^{\pm 1}$ occurs somewhere in $w$. Then $\{ b, c, \dots \}$ freely generates a subgroup of $G$. Since then, proving truly general theorems about subgroups of one-relator groups has proved rather difficult.\footnote{It is worth mentioning that B. B. Newman proved a strong generalisation of the \textit{Freiheitssatz} for one-relator group with torsion, see \cite[Corollary~2.1.6]{Newman1968}.} The proof of the \textit{Freiheitssatz} is originally via the Reidemeister--Schreier method, and consists (with minor modifications, if necessary) in computing a presentation for the normal subgroup $H$ normally generated by all generators except $a$, i.e. the subgroup generated by $\{ a^i b a^{-i}, a^i c a^{-i}, \dots\}$, of $G$. The relators $\tau(a^i R a^{-i})$ are all shorter than $R$, and the group $H$ essentially has the structure of a direct limit of a chain of amalgamated free products of a fixed one-relator group $G'$, where $N$ is defined by the relator $\tau(R)$. As $|\tau(R)| < |R|$, this gives, as long as one understands the structure of the amalgamated free products (which is not too difficult), an inductive procedure for proving statements about $G$, by passing to the new one-relator group $G'$, then $G''$, eventually reaching a free group.

Given the above, it is natural, given a one-relator group $G= \pres{Gp}{a,b,c,\dots}{w=1}$, to ask about the subgroup of $G$ normally generated by all generators $b, c, \dots$ \textit{and $a^m$} for some $m > 0$. This is clearly a finite index subgroup of $G$, and will always be finitely generated as a group. It turns out that this subgroup -- and variations of it -- often has interesting properties (for example: it is sometimes a right-angled Artin group). We will use such subgroups to prove many of the results in \S\ref{Sec:Moldavanskii-sec} and \S\ref{Sec:Torsion-subgroups}.

We will make some brief remarks about some recent (and some not so recent) theorems about one-relator groups with torsion, which we shall use in \S\ref{Sec:Torsion-subgroups}. Generally speaking, it has turned out that one-relator groups with torsion behave more like free groups, and hence are easier to understand, than their torsion-free counterparts. This includes their subgroup structure. One way in which this is true, other than the aforementioned result on solvable subgroups, comes via hyperbolicity. First, we note a very useful result due to Gersten. 

\begin{theorem}[{Gersten \cite{Gersten1996}}]
Let $H$ be any finitely presented subgroup of a hyperbolic one-relator group. Then $H$ is hyperbolic.
\end{theorem}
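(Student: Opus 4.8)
The plan is to reduce the statement to a linear isoperimetric inequality and then to exploit the low dimensionality of one-relator groups. Recall that a finitely presented group is hyperbolic precisely when its Dehn function is linear, and that by the gap theorem (Gromov, with proofs by Olshanskii, Bowditch and Papasoglu) any \emph{subquadratic} Dehn function is automatically linear; so it suffices to produce a subquadratic isoperimetric inequality for the subgroup $H$. A preliminary reduction handles torsion: since one-relator groups with torsion are virtually torsion-free (indeed virtually special, by Wise), and since hyperbolicity and finite presentability are invariant under commensurability, I may pass to a torsion-free finite-index subgroup $G_0 \le G$ and replace $H$ by $H \cap G_0$. By Lyndon's asphericity theorem a torsion-free one-relator group has cohomological dimension at most two, and more generally every $G_0$ arising here satisfies $\mathrm{cd}(G_0) \le 2$, since one-relator groups have virtual cohomological dimension at most two. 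The statement to prove is therefore the general geometric fact that a finitely presented subgroup of a hyperbolic group of cohomological dimension two is hyperbolic.

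Next I would set up the filling problem. Since cohomological dimension is monotone under subgroups, $\mathrm{cd}(H) \le 2$ as well, and $H$ is finitely presented, so I may work with the Cayley two-complex of $H$ and its filling functions. Given a null-homotopic word $w$ over the generators of $H$, of length $n$, I would first view $w$ as a word over $G$; hyperbolicity of $G$ furnishes a van Kampen diagram over $G$ filling $w$ of area at most $Cn$. The goal is to convert this \emph{linear} filling over $G$ into a linear (or at least subquadratic) filling over $H$.

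The heart of the argument, and the place where dimension two is indispensable, is to carry out this conversion homologically. In cohomological dimension two the homological filling function $FV^{(2)}$ is Lipschitz-equivalent to the ordinary Dehn function, both for $G$ and for $H$; this equivalence fails in higher dimensions and is exactly what makes the two-dimensional case tractable. Working at the level of cellular $2$-chains, a $1$-cycle in the Cayley complex of $H$ may be filled by a $2$-chain, and the linear homological filling in $G$ --- together with the controlled chain map induced by the inclusion $H \hookrightarrow G$ and the vanishing of higher syzygies when $\mathrm{cd} \le 2$ --- lets one bound the $\ell^1$-mass of a filling of the cycle in $H$ linearly in $n$. Translating back through the dimension-two equivalence then yields a linear Dehn function for $H$, whence $H$ is hyperbolic.

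\textbf{The main obstacle} is distortion. A finitely presented subgroup of a hyperbolic group may be exponentially distorted, and a naive attempt to pull back a filling would inflate the area exponentially rather than linearly; indeed Brady's examples of finitely presented, non-hyperbolic subgroups of hyperbolic groups show that no such theorem can hold without the dimension hypothesis (those examples necessarily live in dimension $\ge 3$). The crux is therefore to bound the area of diagrams in $H$ \emph{without} first controlling the distortion of $H$ in $G$, and this is achieved precisely by passing to homology, where two-dimensionality forces the homological and homotopical filling functions to agree and to be inherited along the inclusion. Making the chain-level filling estimate precise, and verifying the equivalence between the homological and homotopical filling functions in this generality, is the genuine technical core of the proof.
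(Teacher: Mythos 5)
First, a point of order: the paper does not prove this theorem at all — it is quoted as a black box from Gersten's 1996 paper — so the only meaningful comparison is with Gersten's own argument. Your outer frame does match that argument's structure: reduce to a torsion-free finite-index situation, invoke $\mathrm{cd} \le 2$ for torsion-free one-relator groups (Lyndon), and then prove the general statement that a finitely presented subgroup of a hyperbolic group of cohomological dimension $2$ is hyperbolic. That reduction is sound (though virtual torsion-freeness of one-relator groups with torsion is Fischer--Karrass--Solitar and does not need Wise).

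The genuine gap is in your middle paragraph, i.e. in the proof of the core dimension-$2$ statement. Two problems. First, the asserted ``Lipschitz equivalence'' of $FV^{(2)}$ with the ordinary Dehn function for \emph{all} groups of cohomological dimension $2$ is not a known fact; what is true, and all that is needed, is the one-directional statement that a finitely presented group with linear (or subquadratic) homological filling function is hyperbolic (Gersten in dimension $2$, Mineyev in general). Second, and more seriously, the actual transfer of the linear filling from $G$ to $H$ — the step you yourself identify as the crux — is not carried out: a ``controlled chain map induced by the inclusion $H \hookrightarrow G$'' goes the wrong way (one needs to push $G$-fillings into $H$-chains, not the reverse), and no norm-controlled such map can be constructed naively, precisely because of the distortion you flag. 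The idea that actually closes this gap is \emph{uniqueness of fillings in dimension two}: take a contractible $2$-dimensional $G$-complex $\widetilde{X}$ (it exists since $\mathrm{cd}(G) \le 2$; for a torsion-free one-relator group the universal cover of the presentation complex works, by Lyndon's asphericity theorem). Since $\widetilde{X}$ is $2$-dimensional and contractible, $H_2(\widetilde{X}) = 0$, so a $1$-cycle admits \emph{at most one} $2$-chain filling. Finite presentability of $H$ yields an $H$-cocompact simply connected subcomplex $\widetilde{K} \subseteq \widetilde{X}$; a loop $\gamma$ of length $n$ in $\widetilde{K}$ bounds some $2$-chain inside $\widetilde{K}$ (simple connectivity) and bounds a $2$-chain of mass at most $Cn$ in $\widetilde{X}$ (hyperbolicity of $G$), and by uniqueness these two chains coincide. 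Hence the efficient filling is automatically supported in $\widetilde{K}$, distortion never enters, and $H$ has linear homological filling, so $H$ is hyperbolic. Without this uniqueness mechanism (or Gersten's equivalent $\ell^\infty$-cohomology formulation), your sketch does not close; with it, the ``technical core'' you deferred becomes a three-line argument.
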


We may now combine the above theorem with the following three facts:
\begin{enumerate}
\item One-relator groups with torsion are hyperbolic, by the Spelling Theorem \cite{Newman1968, NybergBrodda2021a};
\item One-relator groups with torsion are coherent, i.e. every finitely generated subgroup is finitely presented, as recently proved \cite{Louder2020}; and
\item The Diophantine problem is decidable in all hyperbolic groups \cite{Dahmani2010}. 
\end{enumerate}
This combination yields the following corollary, which will serve as the starting point for our investigations and results in \S\ref{Sec:Torsion-subgroups}. 

\begin{corollary}\label{Cor:f.g.-ORwt-is-hyperbolic}
Let $H$ be any finitely generated subgroup of a one-relator group with torsion. Then $H$ has decidable Diophantine problem. 
\end{corollary}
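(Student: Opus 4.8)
The plan is to chain together the four ingredients assembled immediately above the statement, since each of them handles exactly one link in the argument. Let $G$ be a one-relator group with torsion and let $H \leq G$ be finitely generated. First I would invoke fact (1): by the Spelling Theorem, $G$ is hyperbolic. This is the deep input that places us in the regime where the remaining machinery applies, and it is what makes the torsion hypothesis essential (a torsion-free one-relator group need not be hyperbolic).

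The crucial bridging step is to upgrade the hypothesis on $H$ from \emph{finitely generated} to \emph{finitely presented}. This is precisely what fact (2), the coherence of one-relator groups with torsion, supplies: since $H$ is a finitely generated subgroup of the coherent group $G$, it is finitely presented. I would emphasise that this is the step on which the whole argument genuinely hinges. Gersten's theorem takes finite presentability as a hypothesis, whereas the corollary assumes only finite generation; without coherence there would be a real gap between what we are given and what the next theorem requires, so this is the point deserving the most care.

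With $H$ now known to be finitely presented, I would apply Gersten's theorem directly: a finitely presented subgroup of a hyperbolic one-relator group is itself hyperbolic, and we have just verified both that $G$ is a hyperbolic one-relator group and that $H$ is finitely presented, so $H$ is hyperbolic. Finally, fact (3), the Dahmani--Guirardel result that the Diophantine problem is decidable in every hyperbolic group, applies to $H$ and yields the conclusion. The argument is thus a short composition of known results, and the only subtlety worth flagging is the reliance on coherence to meet the hypotheses of Gersten's theorem; there is no substantive combinatorial obstacle beyond marshalling these inputs in the correct order.
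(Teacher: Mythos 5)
Your proposal is correct and follows exactly the paper's intended argument: the corollary is stated there as the immediate combination of Gersten's theorem with the three listed facts, chained in precisely the order you give (hyperbolicity of $G$ via the Spelling Theorem, finite presentability of $H$ via coherence, hyperbolicity of $H$ via Gersten, then Dahmani--Guirardel). Your emphasis on coherence as the step bridging ``finitely generated'' to ``finitely presented'' is exactly the right point to flag, since that is what makes Gersten's theorem applicable.
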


The above theorem and its corollary were pointed out to me by H. Wilton. In fact, we will not need the full strength of Corollary~\ref{Cor:f.g.-ORwt-is-hyperbolic} for the results of \S\ref{Sec:Torsion-subgroups}, as we shall only consider finite index subgroups of hyperbolic groups there, which are easily seen to be themselves hyperbolic; however, as part of the general theme of studying subgroups of one-relator groups with torsion to find new torsion-free one-relator groups with decidable Diophantine problem, the above theorem will be crucial. 

\begin{remark}
It is a famous open problem, posed by Baumslag \cite{Baumslag1971}, whether all one-relator groups are coherent. Recently, this problem has shown to be equivalent to a problem concerning the group of units of certain one-relation inverse semigroups, and thereby also to the word problem for one-relation monoids, see \cite{Gray2021}; see also \cite{NybergBrodda2022, NybergBrodda2021}. 
\end{remark}

\section{Warmup -- Torus knot groups}\label{Sec:Warmup-torus-knots}

\noindent We will begin this article by considering a well-studied class of one-relator groups. These are the groups with two generators and, for some $m,n \geq 2$, the single defining relation $a^m b^n=1$. This class of groups is one of the first-studied classes of one-relator groups -- indeed, it is one of the first studied classes of groups in all combinatorial group theory, after free groups. The groups were studied in a combinatorial manner by Schreier \cite{Schreier1924} in 1924, who determined their center and automorphism group. Dehn \cite{Dehn1914}, as part of his investigations of the trefoil knot, had also studied a subclass of this class. Indeed, the link between knots and the groups noted above is quite direct: the fundamental group of the complement of the $(p,q)$-torus knot is isomorphic to the group with the single relation $a^pb^q = 1$. Note, however, that topological considerations force $\gcd(p,q) = 1$, whereas Schreier's combinatorial methods required no such restriction. In recent years, in spite of their seemingly simple nature, have appeared, along with various generalisations, in works in geometric group theory, see e.g. \cite{Niblo2001, Picantin2003, Katayama2017}

\begin{proposition}\label{Prop:Torus-knot-virtually-is-Fn-Z}
Let $m,n \geq 2$, and let $T_{m,n} = \pres{Gp}{a,b}{a^mb^n =1}$. Then $T_{m,n}$ has an index $mn$ characteristic subgroup isomorphic to the right-angled Artin group $\mathbb{Z} \times F_{(m-1)(n-1)}$.
\end{proposition}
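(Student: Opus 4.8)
The plan is to realise the required subgroup as the preimage, under the quotient by the centre, of the commutator subgroup of a free product of two finite cyclic groups, and to exploit the central-extension structure of $T_{m,n}$ throughout. First I would note that $z := a^m = b^{-n}$ is central, being simultaneously a power of $a$ and a power of $b$. Since $m,n \geq 2$, the group $T_{m,n}$ is a genuine amalgam $\langle a\rangle \ast_{\langle a^m\rangle = \langle b^{-n}\rangle} \langle b\rangle \cong \mathbb{Z} \ast_{\mathbb{Z}} \mathbb{Z}$, so the amalgamated subgroup $\langle z\rangle$ is infinite cyclic and in fact $Z(T_{m,n}) = \langle z\rangle \cong \mathbb{Z}$. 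Killing the centre gives
\[
T_{m,n}/\langle z\rangle \cong \pres{Gp}{a,b}{a^m = 1,\ b^n = 1} \cong C_m \ast C_n,
\]
so that $T_{m,n}$ sits in a central extension $1 \to \mathbb{Z} \to T_{m,n} \xrightarrow{\pi} C_m \ast C_n \to 1$.

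Next I would analyse $K := [\,C_m \ast C_n,\, C_m \ast C_n\,]$. As the abelianisation of a free product is the direct product of the abelianisations, $K$ is exactly the kernel of the canonical surjection $C_m \ast C_n \to C_m \times C_n$, hence $[C_m \ast C_n : K] = mn$. By the Kurosh subgroup theorem $K$ is free, since it meets each conjugate of a factor trivially (an element of a conjugate of $C_m$ or $C_n$ lies in $K$ only if it is already trivial in that factor). Its rank is fixed by the rational Euler characteristic: from $\chi(C_m \ast C_n) = \tfrac1m + \tfrac1n - 1$ and multiplicativity under finite index, $\chi(K) = mn\bigl(\tfrac1m + \tfrac1n - 1\bigr) = m + n - mn$, so that $K \cong F_{(m-1)(n-1)}$. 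In keeping with the theme of the article, one may instead obtain $K$ — and its rank — by a direct Reidemeister--Schreier computation, using the Schreier transversal $\{\, a^i b^j : 0 \leq i < m,\ 0 \leq j < n \,\}$ for the index-$mn$ subgroup $H := \pi^{-1}(K) = \ker(T_{m,n} \to C_m \times C_n)$, where after discarding the $mn-1$ trivial Schreier generators one is left with $mn+1$ generators and $mn$ relators that Tietze-reduce to the standard presentation of $\mathbb{Z} \times F_{(m-1)(n-1)}$.

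With $H := \pi^{-1}(K)$ of index $mn$, restricting $\pi$ presents $H$ as a central extension $1 \to \langle z\rangle \to H \to K \to 1$ with $K \cong F_{(m-1)(n-1)}$ free. Free groups are projective, so this extension splits; fixing a splitting $\sigma$ and using that $z$ is central, the assignment $(z^e, f) \mapsto z^e\,\sigma(f)$ is an isomorphism $\mathbb{Z} \times F_{(m-1)(n-1)} \xrightarrow{\ \sim\ } H$ (centrality is precisely what upgrades a split extension to a direct product). Finally $\mathbb{Z} \times F_{(m-1)(n-1)}$ is the right-angled Artin group on the star graph with one central vertex joined to $(m-1)(n-1)$ leaves, as claimed.

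That $H$ is characteristic follows formally: the centre $\langle z\rangle$ is characteristic in $T_{m,n}$, so every automorphism descends to the quotient $C_m \ast C_n$, and the commutator subgroup $K$ is fully invariant there; hence $H = \pi^{-1}(K)$ is preserved by every automorphism of $T_{m,n}$. The main obstacle I expect is not conceptual but one of bookkeeping in the explicit Reidemeister--Schreier route — correctly rewriting the relator images $\tau(a^i b^j\, a^m b^n\, b^{-j} a^{-i})$ and tracking which Schreier generators vanish — together with the (easy but essential) point that the central element $z$ is genuinely what forces the split extension to be a direct product rather than a semidirect one.
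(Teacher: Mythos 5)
Your proposal is correct, and it identifies exactly the same subgroup as the paper does --- namely $H = \pi^{-1}\bigl([C_m \ast C_n, C_m \ast C_n]\bigr)$, the kernel of $T_{m,n} \to C_m \times C_n$ --- and proves it characteristic by the same observation (the centre $\langle a^m\rangle$ is characteristic, and the commutator subgroup of the quotient is fully invariant). Where you genuinely diverge is in how the isomorphism type of $H$ is established. The paper carries out the full Reidemeister--Schreier computation: it writes down the Schreier generators $\alpha_{i,j}, \beta_i, \gamma_j$ over the transversal $\{a^i b^j\}$, proves a small combinatorial lemma ($\prod_{k=0}^{\ell} a^k[b^j,a]a^{-k} = [b^j,a^{\ell+1}]$) to rewrite the conjugated relators, and then Tietze-reduces the resulting $mn$ relations to the star-graph presentation of $\mathbb{Z} \times F_{(m-1)(n-1)}$. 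You instead argue structurally: $H$ is a central extension of the free group $K = \ker(C_m \ast C_n \to C_m \times C_n)$ by $\mathbb{Z}$, such an extension splits because free groups are projective, a split \emph{central} extension is a direct product, and the rank of $K$ is $(m-1)(n-1)$ by Kurosh plus multiplicativity of the rational Euler characteristic. Your route is shorter and cleaner, and it isolates the conceptual reason the direct-product structure appears (centrality of $a^m$ over a free quotient); what it does not produce is the explicit generating set and presentation for $H$, and it bypasses the Reidemeister--Schreier machinery that the paper is deliberately showcasing in this warm-up section as the main technical tool for the later, harder computations (Theorem~\ref{Thm:[am,bn]-contains-BS-star}, Proposition~\ref{Prop:Newman-group-in-inflated}), where no comparable structural shortcut is available.
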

\begin{proof}
Let $G = T_{m,n}$, and consider the subgroup of $G$ given by 
\[
H = \langle a^m, [a^i, b^j] \: (i, j \in \mathbb{Z})\rangle
\]
First, $H$ is characteristic in $G$. Indeed, $\langle a^m \rangle = Z(G)$ by easy and classical results \cite{Schreier1924}. Hence
\[
G / Z(G) \cong \pres{Gp}{a,b}{a^mb^n = 1, a^m = 1} \cong C_m \ast C_n,
\]
and taking the quotient of this group by its commutator subgroup, we find $(G/Z(G))^{\text{ab}} \cong C_m \times C_n$. As the commutator subgroup of $G / Z(G)$ is generated by the basic commutators $[a^i, b^j]$, we have that $H$ coincides with this group. Consequently $H$ is a characteristic subgroup of a characteristic subgroup of $G$, so $H$ is characteristic with index $mn$. Alternatively, we may use Schreier's classification of all automorphisms of $G$ (see \cite{Schreier1924}) and note that $H$ is invariant under all of them. 

As $G/H \cong C_m \times C_n$, a Schreier transversal for $H$ in $G$ is given by 
\[
T = \{ a^i b^j \mid 0 \leq i < m, \: 0 \leq j < n\}.
\]
To compute the representative $\overline{a^ib^j \cdot a}$, note that 
\[
(a^i b^j)^{-1} a^ib^ja \equiv [b^j, a^i]a \in Ha \quad \implies \quad a^ib^ja \in Ha^{i+1}b^j,
\]
using the fact that $Ha = aH$. As $a^m \in H$, and by using a similar argument for the action on $b$, we find
\begin{align*}
\overline{a^ib^j \cdot a} &= a^{i+1\bmod{m}} b^j \\
\overline{a^ib^j \cdot b} &= a^{i}b^{j+1\bmod{n}}.
\end{align*}
Consider the generators $s(t,x) = tx \overline{tx}^{-1}$ for $H$, where $t \in T$ and $x \in \{ a, b\}$. It is easy to verify (using the fact that $Hb^ja^i = Ha^ib^j$) that for all $0 \leq i \leq m-1$ and $0 \leq j \leq n-1$:
\[
s(a^ib^j,a) = \begin{cases*} a^i[b^j, a]a^{-i} & if $i \neq m-1$, \\ 
a^{m-1} b^j a b^{-j} & if $i = m-1.$ 
\end{cases*}
\]
We let $\alpha_{i,j} = a^i[b^j, a]a^{-i}$ for $i\neq m-1$, and let $\gamma_j = a^m[a^{-1}, b^j]$. Similarly 
\[
s(a^ib^j,b) = \begin{cases*} 1 & if $j \neq n-1$, \\ 
a^ib^na^{-i} & if $j = n-1$.
\end{cases*}
\]
We let $\beta_i = a^ib^na^{-i}$. We note two special cases: $a^m = \gamma_0$, and $b^n = \beta_0$. 

Let $R = a^mb^n$, the defining relator of $G$. It is now rather straightforward to verify, but somewhat tedious to discover, how to rewrite the defining relations $\tau(a^ib^j Rb^{-j}a^{-i})$ over these generators. Fortunately, the reader need only do the former. There are two types of relations. The first is in the case of $j=0$. Then the relation is particularly simple, as:
\begin{equation}\label{Eq:tau-ai-R-ai}
\tau(a^iRa^{-i}) = \tau(a^i(a^mb^n)a^{-i}) = \tau(a^ma^ib^na^{-i}) = \gamma_0 \beta_i.
\end{equation}
Suppose $j>0$. Let us consider first the case of $i=0$. 
\[
\tau(b^jRb^{-j}) = \tau(b^ja^mb^nb^{-j}) = \tau(b^ja^mb^{-j}b^n).
\]
Thus, if we can find a product of generators $s(t,x)$ which equals $b^j a^m b^{-j}$ in the free group, then we shall be done. The key observation is now the following lemma.

\begin{lemma}\label{Lem:Free-group-key-lemma}
Let $\ell \geq 0$. Then 
\begin{equation}\label{Eq:free-group-key-lemma}
\prod_{k=0}^{\ell}a^k [b^j, a] a^{-k} = [b^j, a^{\ell+1}].
\end{equation}
\end{lemma}
\begin{proof}
The proof is by induction on $\ell$. If $\ell=0$, then both sides are $[b^j, a]$. Suppose \eqref{Eq:free-group-key-lemma} holds for $\ell < \ell_0$, where $\ell_0 > 0$. We shall prove \eqref{Eq:free-group-key-lemma} for $\ell = \ell_0$. We have, using the inductive hypothesis:
\begin{align*}
\prod_{k=0}^{\ell_0} a^k [b^j, a] a^{-k} &= [b^j, a^{\ell_0}] \left( a^{\ell_0} [b^j, a] a^{-\ell_0} \right) \\
&= b^j a^{\ell_0} b^{-j} a^{-\ell_0} a^{\ell_0} b^j a b^{-j} a^{-1} a^{-\ell_0} \\
&= b^j a^{\ell_0+1} b^{-j} a^{-\ell_0-1} = [b^j, a^{\ell+1}].
\end{align*}
This proves the claim.
\end{proof}
Hence, using Lemma~\ref{Lem:Free-group-key-lemma} we find the equality
\begin{align*}
\left( \prod_{k=0}^{m-2} a^k[b^j, a]a^{-k}\right) (a^{m-1} b^jab^{-j}) &= [b^j, a^{m-1}] a^{m-1}b^jab^{-j} \\
&= b^ja^{m-1}b^{-j}a^{-(m-1)}a^{m-1}b^jab^{-j} \\
&= b^j a^{m} b^{-j}.
\end{align*}
Note that the left-hand side is now just a product $\alpha_{0,j} \alpha_{1,j} \cdots \alpha_{m-2,j} \gamma_j$ of Schreier generators. Thus, replacing each such generator with its symbol, we find
\[
\tau(b^jR b^{-j}) = \tau(b^ja^mb^{-j}b^n) = \alpha_{0,j} \alpha_{1,j} \cdots \alpha_{m-2,j} \gamma_j \beta_0.
\]
Now, we may similarly treat the case of $i>0$. Indeed, it is easy to verify, much like above, that 
\begin{equation}\label{Eq:General-conjugation-torus-knot}
\left( \prod_{k=i}^{m-2} a^k[b^j, a]a^{-k}\right) (a^{m-1} b^jab^{-j}) \left( \prod_{k=0}^{i-1} a^k[b^j, a]a^{-k}\right) = a^i b^j (a^m) b^{-j} a^{-i}
\end{equation}
i.e. the added conjugation by $a^i$ simply cyclically permutes the order of the product. We conclude, for all $i$ and $j\neq 0$, that
\begin{align*}
\tau(a^ib^j (a^mb^n)b^{-j}a^{-i}) &= \tau\left( (a^ib^ja^mb^{-j}a^{-i})(a^ib^j b^n b^{-j} a^{-i})\right) \\ 
&= \tau\left( (a^ib^ja^mb^{-j}a^{-i})(a^ib^n a^{-i})\right) \\
&= \alpha_{i,j} \alpha_{i+1,j} \cdots \alpha_{m-2,j} \gamma_j \alpha_{0,j} \cdots \alpha_{i-1,j} \beta_i,
\end{align*}
where we have used \eqref{Eq:General-conjugation-torus-knot}. Thus we find the presentation:
\[
H \cong \pres{Gp}{\alpha_{i,j}, \beta_i, \gamma_j}{\gamma_0\beta_0 = \gamma_0\beta_1 = \cdots \gamma_0\beta_{n-1} = 1,  X_{i,j}}
\]
where, somewhat abusively, the generators $\alpha_{i,j}$ range over $0 \leq i < m-1$ and $0 < j \leq n-1$; the $\beta_i$ range over $0 \leq i \leq m-1$; and $\gamma_j$ over $0 \leq j \leq n-1$ (in all other cases, the generators are equal to $1$); and the relations $X_{i,j}$ for $0 < j \leq n-1$ are defined as follows (with added spacing for readability):
\[
\left\{
\begin{matrix}
\alpha_{0,j} & \alpha_{1,j} & \cdots & \alpha_{m-3,j} &\alpha_{m-2, j} & \gamma_j & \beta_0  = 1\\
\alpha_{1,j} & \alpha_{2,j} & \cdots & \alpha_{m-2,j} & \gamma_j & \alpha_{0,j} & \beta_1 = 1 \\
 \vdots  & \vdots  &  \vdots     &   \vdots    &    \vdots     &       \vdots       & \vdots \\
 \alpha_{m-2,j} & \gamma_j & \alpha_{0,j} & \cdots & \cdots & \alpha_{m-3,j} & \beta_{m-2} = 1 \\
 \gamma_j & \alpha_{0,j} & \cdots & \cdots  & \cdots & \alpha_{m-2,j} & \beta_{m-1} = 1.
\end{matrix}\right.
\]
That is, the $i$th row of $X_{i,j}$ (as presented above) is just the relation $\tau(a^ib^jRb^{-j}a^{-i})$. Analogously, the relations $\gamma_0\beta_i$ are just the relations \eqref{Eq:tau-ai-R-ai}. Of course, in using e.g. the notation $\alpha_{m-3,j}$ even when $m<3$ the meaning is still clear; for example, the $m$th row in the above relations is simply a product of $\gamma_j$ by $\alpha_{0,j} \cdots \alpha_{m-2,j} \beta_{m-1}$, which is well-defined as $m, n \geq 2$ by assumption. 

We now simplify the presentation. The relations $\gamma_0\beta_i = 1$ for $0 \leq i \leq n-1$ makes this simple. The generator $\gamma_0$ may be removed immediately, yielding that $\beta_i = \beta_{i'}$ for all $0 \leq i, i' \leq n-1$. We keep $\beta_0$, replacing all other $\beta_i$'s by it, yielding a new presentation (call this $H'$). 

Now, using the first row of $X_{i,j}$, we can add the relation $\alpha_{0,j} = (\alpha_{1,j} \cdots \alpha_{m-2,j} \gamma_j \beta_0)^{-1}$ to $H'$, and hence remove the generator $\alpha_{0,j}$ by a Tietze transformation. By considering the $i$th row of $X_{i,j}$, we first find the relation 
\[
\gamma_j (\alpha_{1,j} \cdots \alpha_{m-2,j} \gamma_j \beta_0)^{-1} \alpha_{1,j} \cdots \alpha_{m-2,j} \beta_{0} =  1
\]
which, upon cancelling the $\alpha$'s, simply becomes $\gamma_j \beta_0^{-1} \gamma_j^{-1} \beta_0 = 1$, or equivalently $[\beta_0, \gamma_j] = 1$. Going through the $(i-1)$st row, we find 
\[
 \alpha_{m-2,j} \gamma_j (\alpha_{1,j} \cdots \alpha_{m-2,j} \gamma_j \beta_0)^{-1} \alpha_{1,j}\cdots\alpha_{m-3,j} \beta_{0} = 1.
\]
Cancelling, we find
\[
\alpha_{m-2,j} \gamma_j \beta_0^{-1} \gamma_j^{-1} \alpha_{m-2,j}^{-1} \beta_{0} = 1.
\]
Using the fact that $\beta_0$ and $\gamma_j$ commute, we can hence cancel the $\gamma_j$'s and find
\[
\alpha_{m-2,j} \beta_0^{-1} \alpha_{m-2,j} \beta_0 = 1
\]
or equivalently $[\beta_0, \alpha_{m-2,j}] = 1$. The $(i-2)$nd row, using the fact that $[\beta_0, \gamma_j] = [\beta_0, \alpha_{m-2,j}] = 1$, can now be simplified in an entirely analogous way to yield $[\beta_0, \alpha_{m-3,j}] = 1$, etc. (note that $\gamma_j$ is analogous to ``$\alpha_{m-1,j}$''). Thus we find that the relators $X_{i,j}$ can all be simplified to the following system, for all $0 < j \leq n-1$:
\begin{align}\label{Eq:the-beta0-commute-with-alphas}
[\beta_0, \gamma_j] = [\beta_0, \alpha_{m-2,j}] = [\beta_0, \alpha_{m-3,j}] = \cdots =  [\beta_0, \alpha_{1,j}] = 1.
\end{align}
Now \eqref{Eq:the-beta0-commute-with-alphas} consists of $m-1$ defining relations, and each $1 \leq j \leq n-1$ gives rise to one (distinct) system \eqref{Eq:the-beta0-commute-with-alphas}. Thus $H$ is defined by a presentation which consists of the generators
\begin{align*}
&\alpha_{i,j}, \quad (1 \leq i \leq m-2, \: 1 \leq j \leq n-1) \\
&\gamma_j, \quad\:\:\: (1 \leq j \leq n-1) \\
\textnormal{and } &\beta_0.
\end{align*}
and the $(m-1)(n-1)$ relations given by \eqref{Eq:the-beta0-commute-with-alphas}. Note that $H$ has $(m-2)(n-1) + (n-1) = (m-1)(n-1)$ generators together with $\beta_0$, all of which commute with $\beta_0$. Thus, relabelling the generators, we find
\[
H \cong \pres{Gp}{\alpha, \beta_1, \dots, \beta_{(m-1)(n-1)}}{[\alpha, \beta_i] = 1 \: (\forall i)} \cong  \mathbb{Z} \times F_{(m-1)(n-1)}.
\]
As $H$ has index $mn$ in $G$, the claim is proved. 
\end{proof}

Now, for any $m,n \geq 2$, it follows from Proposition~\ref{Prop:Torus-knot-virtually-is-Fn-Z} that $T_{m,n}$ is virtually a direct product of the hyperbolic group $F_{(m-1)(n-1)}$ by the abelian group $\mathbb{Z}$. Thus, by the recent result due to Ciobanu, Holt \& Rees \cite{Ciobanu2020}, it follows that $T_{m,n}$ has decidable Diophantine problem in this case. When either of $m$ or $n$ is $0$ or $1$, then $T_{m,n}$ is virtually free, so the statement is also true in this case by \cite{Dahmani2010}. We conclude:

\begin{theorem}\label{Thm:warmup-am-bn-diophantine}
For every $m, n \geq 0$ the group $\pres{Gp}{a,b}{a^mb^n=1}$ has decidable Diophantine problem. In particular, the Diophantine problem is decidable in the fundamental group of the complement of any torus knot.
\end{theorem}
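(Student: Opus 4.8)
The plan is a two-regime case analysis: when both exponents are at least $2$, and when at least one of them lies in $\{0,1\}$. In each regime I would realise $T_{m,n} = \pres{Gp}{a,b}{a^m b^n = 1}$ as virtually a direct product of finitely many hyperbolic groups, and then invoke a known decidability result. Almost all of the work has already been done: the substance sits inside Proposition~\ref{Prop:Torus-knot-virtually-is-Fn-Z}, so the theorem itself is an assembly of that proposition with two cited black boxes.

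For the main regime $m,n \geq 2$, I would simply quote Proposition~\ref{Prop:Torus-knot-virtually-is-Fn-Z}, which furnishes an index-$mn$ subgroup of $T_{m,n}$ isomorphic to $\mathbb{Z} \times F_{(m-1)(n-1)}$. Both factors are hyperbolic (the infinite cyclic group and a free group of finite rank), so this subgroup is a direct product of finitely many hyperbolic groups, whence $T_{m,n}$ is \emph{virtually} such a direct product. The theorem of Ciobanu, Holt \& Rees \cite{Ciobanu2020} then immediately gives decidability of the Diophantine problem in $T_{m,n}$.

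For the degenerate regime $\min(m,n) \leq 1$, I would read the possibilities off the relation directly. If $m = 1$ or $n = 1$ the relation eliminates one generator and $T_{m,n} \cong \mathbb{Z}$; if $m = 0$ or $n = 0$ the relation reduces to a single power of one generator, so $T_{m,n}$ is a free product of an infinite cyclic group with a (finite or infinite) cyclic group. In every such case $T_{m,n}$ is virtually free, hence hyperbolic, and decidability of its Diophantine problem follows from Dahmani \& Guirardel \cite{Dahmani2010}. These subcases are not mutually exclusive, but the conclusion is uniform, so combining the two regimes proves the first assertion for all $m, n \geq 0$.

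The knot statement is then a corollary: the fundamental group of the complement of the $(p,q)$-torus knot is presented by $a^p b^q = 1$, and the topology forces $\gcd(p,q) = 1$, so a nontrivial torus knot has $p, q \geq 2$ and falls under the first regime, while the unknot gives $\mathbb{Z}$ and is covered by the second. The point worth stressing is that there is no genuine obstacle left at this stage: the hard analysis has already been absorbed into the Reidemeister--Schreier computation behind Proposition~\ref{Prop:Torus-knot-virtually-is-Fn-Z}, and the only care required here is the routine bookkeeping confirming that each of the finitely many degenerate shapes of the relator yields a virtually free, and therefore hyperbolic, group.
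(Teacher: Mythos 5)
Your proposal is correct and follows essentially the same route as the paper: Proposition~\ref{Prop:Torus-knot-virtually-is-Fn-Z} plus Ciobanu--Holt--Rees \cite{Ciobanu2020} for $m,n \geq 2$, and the virtually free (hence hyperbolic) degenerate cases handled via Dahmani--Guirardel \cite{Dahmani2010}. The only difference is cosmetic: you spell out the degenerate presentations ($\mathbb{Z}$, or $\mathbb{Z} \ast C_n$, or $F_2$) where the paper merely asserts virtual freeness.
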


Note that the case of the torus knot group is when $\gcd(m,n) = 1$. In this case, Proposition~\ref{Prop:Torus-knot-virtually-is-Fn-Z} has recently been proved by Katayama \cite[Theorem~1.5]{Katayama2017} by geometric means (namely, via $3$-manifold theory). In fact, if $\gcd(m,n) = 1$, then Katayama proves that $mn$ is the least index $k$ such that $T_{m,n}$ has an index $k$ subgroup which embeds in a \raag{}. When $\gcd(m,n) \neq 1$, the corresponding statement is not true -- for example, when $m=n$, the situation is significantly simpler; it is not too difficult to show (using a significantly shorter argument than for Proposition~\ref{Prop:Torus-knot-virtually-is-Fn-Z}) that $T_{m,m}$ has an index $m$ normal subgroup isomorphic to $F_m \times \mathbb{Z}$. Of course, this result, which may be well-known, generalises the fact that the infinite dihedral group $D_\infty$ is virtually abelian. When $\gcd(m,n) \neq 1$ (i.e. in the case not treated by Katayama), it would be interesting to know what the least index $k$ such that $T_{m,n}$ has an index $k$ subgroup either isomorphic to or embedding in a \raag{}. Our Proposition~\ref{Prop:Torus-knot-virtually-is-Fn-Z} provides an upper bound on $k$ as $mn$, which the example of $T_{m,m}$ shows is not sharp when $\gcd(m,n) \neq 1$.

\section{Moldavanskii groups and Baumslag-Solitar star-groups}\label{Sec:Moldavanskii-sec}

\subsection{Right-angled Baumslag-Solitar-Artin groups}\label{Subsec:RABSAG-intro}

Let $\Gamma = (V,E)$ be a finite, directed, edge-labelled graph. A (directed) edge $e$ from $v_1$ to $v_2$ (with $v_1, v_2 \in V$) will be labelled by a non-zero integer $m \in \mathbb{Z}$. We will write this as $e = (v_1, m, v_2)$, or graphically as $e \colon v_1 \xrightarrow{m} v_2$. We do not permit loops, or multiple edges with the same start- and endpoint. For every pair of vertices $v_i, v_j \in V$, if there is an edge $v_i \xrightarrow{m} v_j$ then we set $m_{i,j} = m$.  Otherwise, we set $m_{i,j} = 0$ (by convention). The matrix whose $(i,j)$th entry is $m_{i,j}$ is the \textit{(weighted) adjacency matrix of $\Gamma$}, and is denoted $M(\Gamma)$. From $\Gamma$ we will construct a group $B(\Gamma)$ as follows:
\begin{equation}\label{Def:RABSAG}
B(\Gamma) = \pres{Gp}{V}{v_i v_j^{m_{i,j}} = v_j^{m_{i,j}} v_i \quad \text{for all } (v_i, m_{i,j}, v_j) \in E }
\end{equation}

The group $B(\Gamma)$ is called a \textit{right-angled Baumslag-Solitar-Artin group}, or \rabsag{} for short. We will often speak of the ``underlying undirected graph'' of a graph $\Gamma$ as above; this is simply the graph $\Gamma$ without any direction or labels on the edges. We give some examples in Figure~\ref{Fig:RABSAG-examples}. By convention, any edge $(v_i, m_{i,j}, v_j)$ with $m_{i,j} = \pm 1$, i.e. giving rise to a relation $[v_i, v_j] = 1$, will be drawn as a single undirected edge. 

\begin{figure}[!h]
\centering
\begin{tikzpicture}[>=stealth',thick,scale=0.8,el/.style = {inner sep=2pt, align=left, sloped}]%
                        \node (v0)[label=above:$v_0$][circle, draw, fill=black!50,
                        inner sep=0pt, minimum width=8pt] at (-4,0) {};
                        \node (v1)[label=above:$v_1$][circle, draw, fill=black!50,
                        inner sep=0pt, minimum width=8pt] at (-2,0) {};
                        \node (v2)[label=below:$v_2$][circle, draw, fill=black!50,
                        inner sep=0pt, minimum width=8pt] at (-2,-2) {};
                        \node (v3)[label=below:$v_3$][circle, draw, fill=black!50,
                        inner sep=0pt, minimum width=8pt] at (-4,-2) {};
\path[->] 
    (v0)  edge node[above]{$2$}         (v1)
    (v2)  edge node[below]{$-3$}     (v3)
    (v3)  edge node[left]{$-2$}    (v0);
\path[-]
	(v1)  edge node[right]{}         (v2);

                        \node (w0)[label=below:$w_0$][circle, draw, fill=black!50,
                        inner sep=0pt, minimum width=8pt] at (2,-1) {};
                        \node (w1)[label=below:$w_1$][circle, draw, fill=black!50,
                        inner sep=0pt, minimum width=8pt] at (0,-1) {};
                        \node (w2)[label=below:$w_2$][circle, draw, fill=black!50,
                        inner sep=0pt, minimum width=8pt] at (4,-1) {};
                        
\path[->] 
    (w0)  edge node[above]{$2$}         (w1);
\path[-]
	(w2)  edge node[right]{}         (w0);
	
	\node (lab) at (0, -4) {$B(\Gamma_0) = \pres{Gp}{v_0, v_1, v_2, v_3}{ [v_0, v_1^2]  = [v_1, v_2] = [v_2, v_3^{-3}] = [v_3, v_0^{-2}] = 1}$};
		\node (lab) at (0, -4.75) {$B(\Gamma_1) = \pres{Gp}{w_0, w_1, w_2}{ [w_0, w_2]  = [w_0, w_1^2] = 1}$};
\node (lab) at (-3, -3) {$\Gamma_0$};
\node (lab) at (2, -3) {$\Gamma_1$};

\end{tikzpicture}
\caption{Two \rabsags{} $B(\Gamma_0)$ and $B(\Gamma_1)$ with corresponding graphs $\Gamma_0$ and $\Gamma_1$.}
\label{Fig:RABSAG-examples}
\end{figure}

We note some preliminary properties of $B(\Gamma)$. We let $n = |V|$.

\begin{enumerate}
\item If $\Gamma$ is non-empty, then $B(\Gamma)$ is infinite. Indeed, its abelianisation is clearly $\mathbb{Z}^n$. 
\item If $M(\Gamma)$ is the zero matrix, then $B(\Gamma)$ is a free group of rank $n$. 
\item If $M(\Gamma)$ is a binary matrix (i.e. $m_{i,j} \in \{ 0, 1\}$), then $B(\Gamma) = A(\Gamma)$, the right-angled Artin group on the (undirected) graph $\Gamma$. 
\item If $\Gamma$ is a graph on two vertices $u, v$, with one edge $(u, m, v)$, then $B(\Gamma) \cong \BS(m,m)$. 
\end{enumerate}

We let $B_m(\Gamma)$ denote the monoid with the same presentation as \eqref{Def:RABSAG}. The following follows immediately from Adian's classical theory and a result due to Sarkisian.

\begin{proposition}\label{Prop:Basics_of_BGamma}
If $\Gamma$ is a finite forest (as an undirected graph), then:
\begin{enumerate}
\item $B_m(\Gamma)$ embeds in $B(\Gamma)$;
\item the word problem is decidable in $\Gamma$. 
\end{enumerate}
\end{proposition}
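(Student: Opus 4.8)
The plan is to recognise the presentation \eqref{Def:RABSAG} as a \emph{cycle-free} presentation in the sense of Adian, and then to invoke the two relevant black-box theorems. Before doing so, I would first normalise the edge labels: since in the group $B(\Gamma)$ the relation $v_i v_j^{m} = v_j^{m} v_i$ is equivalent to $v_i v_j^{|m|} = v_j^{|m|} v_i$ (invert both sides and rearrange when $m < 0$), there is no loss in taking every label $m_{i,j}$ to be a positive integer. With this convention the defining relations of the monoid $B_m(\Gamma)$ are genuine positive words, so the monoid presentation is well-posed and $B_m(\Gamma)$, $B(\Gamma)$ have literally the same (positive) presentation.

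The key observation — and essentially the only real content of the argument — is the computation of Adian's left and right graphs of this presentation. Recall that these graphs have vertex set $V$, and that each defining relation contributes one edge to each: an edge joining the two \emph{initial} letters of its two sides to the left graph, and an edge joining the two \emph{terminal} letters to the right graph. For a relation $v_i v_j^{m} = v_j^{m} v_i$ with $m \geq 1$, the two sides begin with $v_i$ and $v_j$ and end with $v_j$ and $v_i$ respectively; moreover $i \neq j$, since $\Gamma$ has no loops. Hence this relation contributes the single undirected edge $\{v_i, v_j\}$ to \emph{both} the left graph and the right graph. Because $\Gamma$ has no multiple edges, both Adian graphs are therefore isomorphic to the underlying undirected graph of $\Gamma$. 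Since $\Gamma$ is a finite forest, both graphs are acyclic, so the presentation \eqref{Def:RABSAG} is cycle-free.

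With cycle-freeness established, both conclusions are immediate citations. Part (1) follows from Adian's embeddability theorem: the canonical homomorphism from the monoid of a cycle-free presentation to the group of the same presentation is injective, so $B_m(\Gamma)$ embeds in $B(\Gamma)$. Part (2) follows from Sarkisian's theorem that a group defined by a cycle-free presentation has decidable word problem. One should of course check that the remaining hypotheses of these theorems are met, but they are automatic: each relation is reduced, its two sides starting and ending with distinct letters, precisely because $\Gamma$ is a simple loop-free graph.

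I expect the only genuine obstacle to be bookkeeping rather than mathematics: one must state the hypotheses of Adian's and Sarkisian's theorems in exactly the form in which they are proved, and confirm term-by-term that \eqref{Def:RABSAG} satisfies them (reduced relations, distinct first and last letters, acyclic left and right graphs). An alternative, self-contained route would avoid these citations by realising $B(\Gamma)$ as the fundamental group of a graph of groups over the tree: each edge $(v_i, m, v_j)$ yields a piece $\langle v_i, v_j \mid [v_i, v_j^{m}] = 1\rangle \cong \langle v_j\rangle \ast_{\langle v_j^{m}\rangle} \bigl(\langle v_j^{m}\rangle \times \langle v_i\rangle\bigr)$, and assembling these along the forest gives a decidable word problem via normal-form arguments for graphs of groups. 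This is strictly more laborious than the two-line appeal to Adian and Sarkisian, so I would only fall back on it if the published statements turned out not to apply verbatim.
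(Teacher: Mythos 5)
Your part (1) and the cycle-freeness computation are exactly the paper's argument: both of Adian's graphs of the presentation \eqref{Def:RABSAG} coincide with the underlying undirected graph of $\Gamma$, hence are forests, and Adian's embedding theorem gives $B_m(\Gamma) \hookrightarrow B(\Gamma)$. Your normalisation of negative labels to positive ones is a sensible preliminary (indeed it is needed for the monoid presentation to make literal sense), and the paper passes over it in silence.

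The gap is in part (2). Sarkisian's theorem, in the form the paper uses it (Theorem~3 of \cite{Sarkisian1979}), does not say that every group with a cycle-free presentation has decidable word problem; it says that for a cycle-free presentation, decidability of the word problem \emph{and of the left and right divisibility problems in the monoid} $B_m(\Gamma)$ implies decidability of the word problem in the group $B(\Gamma)$. These algorithmic hypotheses cannot be dropped: since $B_m(\Gamma)$ embeds in $B(\Gamma)$ by part (1), a cycle-free presentation whose monoid had undecidable word problem would define a group with undecidable word problem, so cycle-freeness alone can never suffice, and the statement you attribute to Sarkisian is false as stated. Your list of ``remaining hypotheses'' (reduced relations, distinct first and last letters) misidentifies what actually needs checking. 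The missing step is the paper's observation that every defining relation $v_i v_j^{m} = v_j^{m} v_i$ is length-preserving, so that $B_m(\Gamma)$ is a homogeneous monoid in which the word problem and both divisibility problems are decidable by exhaustive search over words of bounded length; feeding this into Sarkisian's theorem then yields part (2). The repair is short, but without it your appeal to Sarkisian proves nothing.
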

\begin{proof}
As every relation in $B(\Gamma)$ is of the form $v_i v_j^{m_{i,j}} = v_j^{m_{i,j}} v_i$, it follows that the left and right graphs (in the sense of Adian \cite{Adian1966}) of $B_m(\Gamma)$ are both isomorphic as undirected graphs to $\Gamma$. In particular, $B_m(\Gamma)$ is a cycle-free monoid, and so embeds in $B(\Gamma)$ (via the identity map). 

As every relation $u=v$ in $B_m(\Gamma)$ satisfies $|u|=|v|$, the word problem as well as the left and right divisibility problems in $B_m(\Gamma)$ are almost trivial to solve. As $B_m(\Gamma)$ is cycle-free, the decidability of these problems suffices for the word problem in $B(\Gamma)$ to be decidable by \cite[Theorem~3]{Sarkisian1979}. 
\end{proof}

In view of Proposition~\ref{Prop:Basics_of_BGamma}, the following two problems naturally present themselves:

\begin{question}
Let $\Gamma$ be any finite, directed edge-labelled graph as above. Does $B_m(\Gamma)$ embed in $B(\Gamma)$? Is the word problem decidable in $B(\Gamma)$? Is $B(\Gamma)$ automatic?
\end{question}

All three questions have affirmative answers in the case of right-angled Artin groups, i.e. when $M(\Gamma)$ is a binary matrix: Paris \cite{Paris2002} proved that every right-angled Artin monoid (defined analogously) embeds in its corresponding right-angled Artin group (this can also be deduced in a simpler manner via rewriting systems \cite{Chouraqui2009}); Green \cite{Green1990} solved the word problem via normal forms; and Hermiller \& Meier \cite{Hermiller1995} constructed finite complete rewriting systems for right-angled Artin groups which additionally lead to an automatic structure.

Particularly relevant to the present article is the following question.

\begin{question}\label{Q:DP-in-rabsags?}
Is the Diophantine problem decidable in all \rabsags?
\end{question}

The Diophantine problem is decidable in all right-angled Artin groups \cite{Diekert2006}. A positive answer to the following question would yield a positive answer to \eqref{Q:DP-in-rabsags?}, by virtue of the result for \raags{} and the result by Levine \cite{Levine2021} that decidability of the Diophantine problem is inherited by taking finite extensions, 

\begin{question}
Is every \rabsag{} virtually a \raag{}?
\end{question}

The answer appears affirmative at least in the case that $\Gamma$ has only a single directed edge with a label $p>1$, in which case an index $p$ \raag{} subgroup can often be found in $B(\Gamma)$. 

One may at first glance expect there to be no substantial added differences when working with \rabsags{} rather than \raags{}. However, as we shall see in \S\ref{Subsec:SMMP}, there is a somewhat marked increase in difficulty for some decision problems. We will first prove some embeddability properties between \raags{} and \rabsags{} to demonstrate this difficulty, and connect it with the submonoid membership problem.

\subsection{The submonoid membership problem}\label{Subsec:SMMP}

We borrow some terminology from \cite{Lohrey2008}. We say that an undirected graph $\Gamma$ is $C_4$-\textit{free} (or \textit{chordal}) if it does not contain any induced subgraph isomorphic to $C_4$, the cycle on four vertices, i.e. if every cycle on four vertices in $\Gamma$ has a chord. We say that $\Gamma$ is \textit{$P_4$-free} if it does not contain any induced subgraph isomorphic to $P_4$, the path graph on four vertices. Finally, $\Gamma$ is a \textit{transitive forest} if it has no induced subgraph isomorphic to either $C_4$ or $P_4$. Note that every connected component of a transitive forest contains a vertex which is adjacent to every vertex in the component. In particular, if $\Gamma$ is a connected transitive forest, then $A(\Gamma)$ has non-trivial center. 

\begin{theorem}[Lohrey \& Steinberg, 2008]\label{Thm:Lohrey-Steinberg}
Let $G = A(\Gamma)$ be a right-angled Artin group. Then:
\begin{enumerate}
\item The subgroup membership problem in $G$ is decidable only if $\Gamma$ is $C_4$-free;
\item The submonoid membership problem in $G$ is decidable if and only if $\Gamma$ is $P_4$-free;
\item The submonoid membership problem in $G$ is recursively equivalent to the rational subset membership problem in $G$.
\end{enumerate}
\end{theorem}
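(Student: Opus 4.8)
The plan is to organise all three parts around a single structural fact: for an induced subgraph $\Lambda$ of $\Gamma$, the parabolic subgroup $A(\Lambda)\le A(\Gamma)$ is a retract, the retraction $\rho\colon A(\Gamma)\to A(\Lambda)$ being the homomorphism that fixes the generators lying in $\Lambda$ and sends the rest to $1$ (this respects every defining commutator precisely because $\Lambda$ is \emph{induced}). A retraction transports membership problems downward: if $g$ and the elements of $S$ all lie in $A(\Lambda)$, then $g\in\langle S\rangle$ holds in $A(\Gamma)$ if and only if it holds in $A(\Lambda)$, since applying $\rho$ to a witnessing expression leaves $g$ and $S$ unchanged. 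Hence any membership algorithm for $A(\Gamma)$ restricts to one for $A(\Lambda)$. For part~(1) I would take $\Lambda=C_4$; here $A(C_4)\cong F_2\times F_2$, whose subgroup membership problem is undecidable by Mihailova's fibre-product encoding of a finitely presented group with unsolvable word problem. The retract principle then makes subgroup membership in $A(\Gamma)$ undecidable, so decidability forces $\Gamma$ to be $C_4$-free.

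For part~(2) the ``only if'' direction has two sources. Since the subgroup generated by $S$ equals the submonoid generated by $S\cup S^{-1}$, subgroup membership reduces to submonoid membership, so the $C_4$ obstruction above already rules out decidable submonoid membership whenever $C_4$ is induced. The new content is the induced-$P_4$ case: I would reduce a fixed undecidable problem — the halting problem of a two-counter (Minsky) machine is convenient — to submonoid membership in $A(P_4)=\pres{Gp}{a,b,c,d}{[a,b]=[b,c]=[c,d]=1}$, using the three commutations to keep two counters (stored as exponents in the commuting pair $b,c$) synchronised while $a$ and $d$ perform increments, decrements, and zero-tests; the retract principle then propagates undecidability to every $\Gamma$ containing an induced $P_4$. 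For the converse I would induct on the cotree of the cograph $\Gamma$, using two closure lemmas: a disjoint union of factors gives a free product (decidability of submonoid, equivalently rational-subset, membership is preserved under free products), and a cone vertex gives a direct product with $\mathbb{Z}$ (preserved under $\times\mathbb{Z}$). The recursion succeeds, however, \emph{only} when every join in the cotree is a cone-vertex join: a join on two or more vertices reintroduces $F_2\times F_2$ and destroys decidability. I should therefore note that the ``if'' direction as literally phrased cannot hold — $C_4\cong K_{2,2}$ is itself $P_4$-free, yet $A(C_4)=F_2\times F_2$ has undecidable submonoid membership — so the argument in fact establishes decidability exactly for transitive forests, i.e. when $\Gamma$ is $C_4$-free as well as $P_4$-free.

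For part~(3) one reduction is immediate: a finitely generated submonoid is a rational subset, so submonoid membership is a special instance of rational-subset membership in the same group. For the reverse I would use the standard automaton encoding. Present a rational subset $R\subseteq A(\Gamma)$ by a finite automaton with states $q_0,\dots,q_n$ (start $q_0$, sole accepting state $q_n$) and transitions $q_i\xrightarrow{x}q_j$; introduce auxiliary free letters $t_0,\dots,t_n$ and, for each transition, the element $(x,\,t_i^{-1}t_j)$ of $A(\Gamma)\times F_{n+1}$. In a product of such generators the free coordinate telescopes to $t_{i_0}^{-1}t_{i_k}$ exactly when the chosen transitions form a genuine path, so $g\in R$ if and only if $(g,\,t_0^{-1}t_n)$ lies in the submonoid they generate. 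Since $A(\Gamma)\times F_{n+1}$ is again a right-angled Artin group, this exhibits rational-subset membership as a submonoid membership question, and together with the trivial reduction it makes the two problems recursively equivalent over the class of graph groups.

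The hard part is the induced-$P_4$ undecidability in part~(2). The $C_4$ case is essentially handed to us, because the parabolic $F_2\times F_2$ imports Mihailova's theorem directly; but $P_4$ is $C_4$-free, so $A(P_4)$ contains no such parabolic direct product and the undecidability must be built by hand from the finer trace combinatorics of the three commutations. The genuinely delicate point is faithfulness: one must verify that no spurious partial-commutation rewriting of a product of the chosen generators can produce the target element unless it corresponds to a real halting computation, so that the reduction has no false positives. By comparison, the decidability recursion and the automaton encoding of part~(3) are routine once the retract principle and the two preservation lemmas (free products and $\times\mathbb{Z}$) are in place.
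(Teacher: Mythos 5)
Your part~(1) is correct and is exactly the argument in Lohrey \& Steinberg: induced subgraphs give parabolic retracts, and Mihailova's fibre product in $A(C_4)\cong F_2\times F_2$ imports undecidability. Your objection to the literal statement of part~(2) is also a genuine and correct catch: $C_4$ is $P_4$-free while $A(C_4)$ has undecidable submonoid membership (a finitely generated subgroup is a finitely generated submonoid), so the ``if'' direction must be stated as ``$\Gamma$ is a transitive forest'', i.e.\ both $C_4$- and $P_4$-free, which is how the theorem actually reads in the source. Note the present paper only ever uses the ``only if'' direction and decidability for star graphs (which are transitive forests), so nothing downstream is affected.

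The serious gap is the core of part~(2), undecidability for $A(P_4)$, which you sketch but do not prove, and the sketch points in a direction that would fail. A submonoid-membership certificate is merely a product of generators equal to the target: it can constrain \emph{final} counter values, but it has no mechanism for enforcing \emph{zero tests} at intermediate stages of a run, and zero tests are exactly what separates Turing-complete two-counter machines from vector addition systems, whose reachability problem is decidable. So ``increments, decrements, and zero-tests'' names precisely the step that cannot be implemented naively, and the faithfulness verification you defer is not a routine check but the entire theorem. Lohrey \& Steinberg do not build a counter machine directly; they derive the $P_4$ case from trace theory, using the theorem of Aalbersberg \& Hoogeboom that the intersection problem for rational subsets of the trace monoid on $P_4$ is undecidable, and then lift this to a fixed finitely generated submonoid of the group. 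Your positive direction also leans on a lemma that is not available off the shelf: it is open in general whether decidability of rational subset membership is preserved under direct products with $\mathbb{Z}$, so the induction over the cotree must carry a stronger, effectively semilinear invariant through the free-product and $\times\,\mathbb{Z}$ constructors, which is what Lohrey \& Steinberg do. (Run the induction for rational subsets, not submonoids: preservation of submonoid membership under free products is precisely the open problem recorded in the remark following the theorem in this paper.)

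Part~(3) as you argue it is flawed twice. First, the encoding is unsound: in the direct product $A(\Gamma)\times F_{n+1}$ a loop transition at state $q_i$ encodes as $(x,1)$ and can be inserted anywhere, and more generally any closed path telescopes to trivial second coordinate, so $(hg,\,t_0^{-1}t_n)$ lies in the submonoid whenever $h$ labels a cycle at an arbitrary state and $g\in R$ --- false positives. Second, it lands in the wrong group: the statement asserts equivalence within the same $G$, whereas $A(\Gamma)\times F_{n+1}$ is the RAAG on the join of $\Gamma$ with $E_{n+1}$, which contains an induced $C_4$ (hence $F_2\times F_2$) as soon as $\Gamma$ has two non-adjacent vertices, so the target's submonoid problem is undecidable and the reduction transfers nothing back. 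The standard repair replaces the direct product by the free product $A(\Gamma)\ast F_{n+1}=A(\Gamma\sqcup E_{n+1})$ with generators of the form $t_i x t_j^{-1}$ (after eliminating loops and trivial labels): mismatched states then leave uncancellable $t$-syllables, and disjoint union, unlike the join, preserves the transitive-forest property. Equivalence for a fixed $G$ then follows by combining this with the classification in part~(2), or by observing that rational subset membership is recursively enumerable while the undecidable submonoid instances constructed are complete for that class.
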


\begin{remark}
It is an open problem whether there exists some group $G$ in which the rational subset membership problem is undecidable but the submonoid membership problem is decidable. Indeed, such a group exists if and only if decidability of the submonoid membership problem is \textbf{not} in general preserved under taking free products \cite{Lohrey2008}. It is also an open problem whether part (1) of Theorem~\ref{Thm:Lohrey-Steinberg} is sharp; for instance, it is unknown whether the subgroup membership problem is undecidable in $A(C_5)$ (and $A(C_5) \leq A(C_k) \leq A(C_5)$ for every $k \geq 5$ by \cite[Theorem~11]{Kim2013}). 
\end{remark}

Droms \cite{Droms1987} proved that if $\Gamma$ is a transitive forest, then every finitely generated subgroup of $A(\Gamma)$ is isomorphic to $A(\Gamma')$ for some transitive forest $\Gamma'$. We will now show (Propositions~\ref{Prop:BGamma_1-contains-P4} and \ref{Prop:Chordal-RABSAG-contains-C4}) that the analogous statement for right-angled Baumslag-Solitar-Artin groups fails remarkably easily.

\begin{proposition}\label{Prop:BGamma_1-contains-P4}
Let $G = B(\Gamma_1)$, where $\Gamma_1$ is the graph 
\begin{center}
\begin{tikzpicture}[>=stealth',thick,scale=1.3,el/.style = {inner sep=2pt, align=left, sloped}]%

                        \node (w0)[label=below:$w_0$][circle, draw, fill=black!50,
                        inner sep=0pt, minimum width=8pt] at (0,0) {};
                        \node (w1)[label=below:$w_1$][circle, draw, fill=black!50,
                        inner sep=0pt, minimum width=8pt] at (-1,0) {};
                        \node (w2)[label=below:$w_2$][circle, draw, fill=black!50,
                        inner sep=0pt, minimum width=8pt] at (1,0) {};
                        
\path[->] 
    (w0)  edge node[above]{$2$}         (w1);
\path[-]
	(w2)  edge node[above]{}         (w0);
\end{tikzpicture}
\end{center}
Then $A(P_5) \leq G$ as an index $2$ subgroup. In particular, the submonoid membership problem is undecidable in $G$.
\end{proposition}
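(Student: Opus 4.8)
The plan is to realise $A(P_5)$ explicitly as an index $2$ subgroup of $G = \pres{Gp}{w_0,w_1,w_2}{[w_0,w_1^2]=1,\ [w_0,w_2]=1}$ via the Reidemeister--Schreier method, and then transfer the undecidability of the submonoid membership problem from this subgroup to $G$ using Theorem~\ref{Thm:Lohrey-Steinberg}. The conceptual point is to pass to a subgroup that ``unfolds'' the single Baumslag--Solitar-type edge $w_0 \xrightarrow{2} w_1$ into ordinary commutation, so that the \rabsag{} becomes a genuine \raag.

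First I would take the homomorphism $\phi \colon G \to \mathbb{Z}/2\mathbb{Z}$ determined by $w_1 \mapsto 1$ and $w_0, w_2 \mapsto 0$; this is well defined because both defining relators are commutators and the image is abelian, so they lie in the kernel. Let $H = \ker\phi$, of index $2$, and take the Schreier transversal $T = \{1, w_1\}$. Computing the Schreier generators $s(t,x) = tx\,\overline{tx}^{-1}$ then yields (discarding the trivial $s(1,w_1)=1$) the five generators
\[
a = w_0,\quad b = w_1 w_0 w_1^{-1},\quad c = w_1^2,\quad d = w_2,\quad e = w_1 w_2 w_1^{-1},
\]
already matching the five vertices of $P_5$. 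Rewriting the four relators $\tau(tR_it^{-1})$ (for $t \in T$ and $R_1 = [w_0,w_1^2]$, $R_2 = [w_0,w_2]$) by the shortcut of \S\ref{Subsec:Reidemeister}, and using that conjugation by $w_1$ simply replaces $w_0$ by $w_1 w_0 w_1^{-1}$ while fixing $w_1^2$ and sending $w_2 \mapsto w_1 w_2 w_1^{-1}$, I expect to obtain exactly the four commutator relations $[a,c]=[b,c]=[a,d]=[b,e]=1$. Hence $H \cong A(\Gamma')$, where $\Gamma'$ has edge set $\{a,c\},\{b,c\},\{a,d\},\{b,e\}$; reading this as the path $d - a - c - b - e$ identifies $\Gamma'$ with $P_5$.

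For the final clause, since $P_5$ contains $P_4$ as an induced subgraph, $P_5$ is not $P_4$-free, so Theorem~\ref{Thm:Lohrey-Steinberg}(2) gives that the submonoid membership problem is undecidable in $A(P_5) \cong H$. I would then observe that this transfers at once to $G$: given a submonoid membership instance in $H$, rewrite the generators and the target (all elements of $H$) as words over $\{w_0,w_1,w_2\}$ through the explicit embedding; the submonoid they generate inside $G$ coincides \emph{as a set} with the one they generate inside $H$, and the target lies in $H$, so the two membership queries have identical answers. Thus a decision procedure for $G$ would decide the problem in $H$, forcing undecidability in $G$.

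The main obstacle is one of discovery rather than technical difficulty: guessing that $\ker\phi$ (rather than some other index-$2$ subgroup) is the one whose Reidemeister--Schreier presentation collapses to a \raag, and then bookkeeping the rewriting carefully enough to recognise the resulting graph as precisely $P_5$ rather than a star or some other tree. Once the correct subgroup is fixed, every computation is routine and the undecidability transfer is essentially formal.
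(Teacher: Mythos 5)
Your proposal is correct and follows essentially the same route as the paper: the subgroup you take (the kernel of the mod-$2$ map killing $w_0,w_2$) is exactly the paper's normal subgroup $\langle w_1^2, w_1^i w_0 w_1^{-i}, w_1^i w_2 w_1^{-i}\rangle$, the transversal $\{1,w_1\}$, the five Schreier generators, and the four rewritten commutator relations all coincide with the paper's computation, yielding the path $w_2 - w_0 - w_1^2 - w_1w_0w_1^{-1} - w_1w_2w_1^{-1}$. Your explicit justification of the undecidability transfer from $H$ to $G$ is a routine point the paper leaves implicit, and it is argued correctly.
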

\begin{proof}
Let $H$ be the subgroup of $G$ generated by 
\[
\{ w_1^2, w_1^i w_0 w_1^{-i}, w_1^i w_2 w_1^{-i} \: (i \in \mathbb{Z}) \}.
\] 
This is obviously a normal subgroup. A presentation for $G/H$ is found by a simple Tietze transformation to be $\pres{Gp}{w_1}{w_1^2 = 1}$, so $H$ has index $2$ in $G$. We will show that $H \cong A(P_5)$. A Schreier transversal for $G/H$ is $T = \{ 1, w_1 \}$. Let $s_{t,i} = tw_i \overline{tw_i}^{-1}$ for $t \in T, i \in \{ 0,1,2\}$. Then the collection $S$ of all such $s_{t,i}$ generates $H$. Now as $w_0, w_2 \in H$, we find
\begin{equation*}
\overbrace{s_{1,0} = w_0}^{\beta_1}, \qquad s_{1,1} = 1, \qquad \overbrace{s_{1,2} = w_2}^{\beta_0}
\end{equation*}
\begin{equation*}
\underbrace{s_{w_1,0} = w_1 w_0 w_1^{-1}}_{\beta_3}, \qquad \underbrace{s_{w_1,1} = w_1^2}_{\beta_2}, \qquad \underbrace{s_{w_1,2} = w_1 w_2 w_1^{-1}}_{\beta_4}.
\end{equation*}
A defining set of relators for $H$ is now, by the Reidemeister-Schreier method, the set of relators $tRt^{-1}$, where $t \in T$ and $R$ is a relator for $G$, rewritten using a rewriting function $\tau$ from the alphabet $\{ w_0, w_1, w_2\}$ to the alphabet $\{ \beta_0, \beta_1, \dots, \beta_4\}$. Let $R_1 = [w_0, w_1^2]$, and $R_2 = [w_0,w_2]$. Then the defining relators for $H$ become:
\begin{align*}
\tau(&R_1) = \tau(w_0 w_1^2 w_0^{-1} w_1^{-2}) \equiv \tau \left((w_0)(w_1^2)(w_0)^{-1}(w_1^{2})^{-1}\right) = \beta_1 \beta_2 \beta_1^{-1} \beta_2^{-1}, \\
\tau(w_1 &R_1 w_1^{-1}) = \tau(w_1(w_0 w_1^2 w_0^{-1} w_1^{-2})w_1^{-1}) \equiv \tau \left( (w_0^{w_1}) (w_1^2) (w_0^{w_1})^{-1} (w_1^2)^{-1} \right) = \beta_3 \beta_2 \beta_3^{-1} \beta_2^{-1}, \\
\tau(&R_2) = \tau(w_0 w_2 w_0^{-1} w_2^{-1}) = \beta_1 \beta_0 \beta_1^{-1} \beta_0^{-1}, \\
\tau(w_1&R_2 w_1^{-1}) = \tau(w_1 w_0 w_2 w_0^{-1} w_2^{-1} w_1^{-1}) \equiv \tau\left( w_0^{w_1} w_2^{w_1} (w_0^{-1})^{w_1} (w_2^{-1})^{w_1} \right) = \beta_3 \beta_4 \beta_3^{-1} \beta_4^{-1}.
\end{align*}
Hence a presentation for $H$ is given by 
\[
H \cong \pres{Gp}{\beta_0, \dots, \beta_4}{[\beta_i, \beta_{i+1}] = 1 \:\: (0 \leq i \leq 3} \cong A(P_5),
\]
i.e. $H$ is isomorphic to the \raag{} with underlying graph a path on the five vertices $\beta_0, \dots, \beta_4$. 
\end{proof}

More generally, we denote by $P_{3,k}$ the following graph with three vertices:
\begin{center}
\begin{tikzpicture}[>=stealth',thick,scale=1.3,el/.style = {inner sep=2pt, align=left, sloped}]%

                        \node (w0)[label=below:$w_0$][circle, draw, fill=black!50,
                        inner sep=0pt, minimum width=8pt] at (0,0) {};
                        \node (w1)[label=below:$w_1$][circle, draw, fill=black!50,
                        inner sep=0pt, minimum width=8pt] at (-1,0) {};
                        \node (w2)[label=below:$w_2$][circle, draw, fill=black!50,
                        inner sep=0pt, minimum width=8pt] at (1,0) {};
                        
\path[->] 
    (w0)  edge node[above]{$k$}         (w1);
\path[-]
	(w2)  edge node[above]{}         (w0);
\end{tikzpicture}
\end{center}
The graph $\Gamma_1$ in Proposition~\ref{Prop:BGamma_1-contains-P4} is then $P_{3,2}$. By the same reasoning as in Proposition~\ref{Prop:BGamma_1-contains-P4}, we find that $B(P_{3,k})$ contains an index $k$ subgroup isomorphic to $A(\Gamma_k)$, where $\Gamma_k$ is an undirected graph obtained as follows: take $k$ copies of the (undirected) path on three vertices, and root each such graph at one of the leaves. The graph $\Gamma_k$ is then the ``wedge sum'' of the graphs in the roots, i.e. the union of the $k$ graphs with the roots all identified, having $2k+1$ vertices in total. Clearly, $\Gamma_k$ contains an induced subgraph isomorphic to $P_4$ for all $k \geq 2$, so $A(P_4) \leq A(\Gamma_k) \leq B(P_{3,k})$ for all such $k$. We leave the details to the interested reader. We conclude by Theorem~\ref{Thm:Lohrey-Steinberg} the following: 

\begin{lemma}\label{Lem:SMMP-in-BP3k-and-induced}
The submonoid membership problem is undecidable in $B(P_{3,k})$ when $k \geq 2$. In particular, let $\Gamma$ be any directed graph containing $P_{3,k}$ as an induced subgraph. Then $B(P_{3,k}) \leq B(\Gamma)$, and the submonoid membership problem is undecidable in $B(\Gamma)$. 
\end{lemma}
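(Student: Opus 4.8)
The plan is to reduce everything to the \raag{} case already handled by Theorem~\ref{Thm:Lohrey-Steinberg}, using two transfer principles: first, that the submonoid membership problem can only become \emph{harder} when one passes from a finitely generated subgroup to an overgroup; and second, that an induced subgraph of a labelled graph yields a retract of the associated \rabsag. For the first principle (the \emph{transfer principle}) I would record the routine but essential observation that if $K \leq L$ is a finitely generated subgroup whose generators are given as explicit words over the generating set of $L$, then any submonoid of $K$ is literally a submonoid of $L$, and membership of a $K$-element in it is unchanged when computed in $L$; hence any decision procedure for the submonoid membership problem in $L$ yields one in $K$, and consequently undecidability in $K$ forces undecidability in $L$.

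For the first assertion I would invoke the Reidemeister--Schreier computation carried out in Proposition~\ref{Prop:BGamma_1-contains-P4} and its stated generalisation: for every $k \geq 2$ the group $B(P_{3,k})$ contains $A(\Gamma_k)$ as an index $k$ subgroup, where $\Gamma_k$ is the wedge of $k$ copies of the path on three vertices, and $\Gamma_k$ has $P_4$ as an induced subgraph. Thus $A(P_4) \leq A(\Gamma_k) \leq B(P_{3,k})$, with explicit generating words produced by the procedure. Since $P_4$ is (trivially) not $P_4$-free, Theorem~\ref{Thm:Lohrey-Steinberg}(2) gives that the submonoid membership problem is undecidable in $A(P_4)$. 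Applying the transfer principle to the chain $A(P_4) \leq B(P_{3,k})$ then yields undecidability of the submonoid membership problem in $B(P_{3,k})$.

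For the ``in particular'' clause I must first produce the embedding $B(P_{3,k}) \leq B(\Gamma)$. The natural map $\phi \colon B(P_{3,k}) \to B(\Gamma)$ sending each generator to the corresponding generator of $\Gamma$ is well defined because each defining relation of $B(P_{3,k})$ comes from an edge of $P_{3,k}$, which is an edge of $\Gamma$ carrying the same label. To see that $\phi$ is injective I would build a retraction $\rho \colon B(\Gamma) \to B(P_{3,k})$ fixing the vertices of $P_{3,k}$ and sending every other vertex to $1$, exactly as one does for \raags. Checking that $\rho$ respects each defining relation $v_i v_j^{m} = v_j^{m} v_i$ of $B(\Gamma)$ is a short case analysis on how many of $v_i, v_j$ lie in $P_{3,k}$: if at most one does then the image relation collapses to a triviality, while if both do then \emph{because $P_{3,k}$ is induced} that edge (with its label) already appears in $P_{3,k}$, so the image relation holds in $B(P_{3,k})$. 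Since $\rho\phi = \mathrm{id}$, the map $\phi$ is injective and $B(P_{3,k}) \leq B(\Gamma)$; a final application of the transfer principle, combined with the first assertion, gives undecidability of the submonoid membership problem in $B(\Gamma)$.

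The genuinely new step, and the one I expect to require the most care, is the embedding $B(P_{3,k}) \leq B(\Gamma)$: unlike in a \raag{} one cannot simply quote a standard retraction, and the hypothesis that $P_{3,k}$ is \emph{induced} is precisely what guarantees that no extra labelled edge of $\Gamma$ spoils the well-definedness of $\rho$. The transfer of undecidability, by contrast, is conceptually the crux but technically routine, provided one is careful that it is realised by an honest (uniform) many--one reduction at the level of input words.
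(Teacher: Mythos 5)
Your proposal is correct and follows essentially the same route as the paper: the chain $A(P_4) \leq A(\Gamma_k) \leq B(P_{3,k})$ obtained from the Reidemeister--Schreier computation of Proposition~\ref{Prop:BGamma_1-contains-P4}, then Theorem~\ref{Thm:Lohrey-Steinberg}(2) applied to $A(P_4)$, upward transfer of undecidability, and the induced-subgraph embedding $B(P_{3,k}) \leq B(\Gamma)$ for the second clause. The only difference is that you make explicit the retraction argument and the uniform reduction underlying the transfer principle, both of which the paper treats as obvious and leaves unproved.
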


Here we have used the obvious fact that if $\Gamma$ is an induced subgraph of $\Gamma'$, then $B(\Gamma) \leq B(\Gamma')$ via the map induced from the inclusion of the vertices of $\Gamma'$ into $\Gamma$. We shall use Lemma~\ref{Lem:SMMP-in-BP3k-and-induced} presently, by giving another family of \rabsags{} with undecidable submonoid membership problem. For $k,\ell \in \mathbb{Z}$, we denote by $S_{k,\ell}$ \textit{the $\ell$-regular Baumslag-Solitar-Artin star with $k$ arms} given in Figure~\ref{Fig:BS-star}. We let $S^{(b)}_{k,\ell}$ be the \textit{bristled} graph of $S_{k,\ell}$, also presented in Figure~\ref{Fig:BS-star}.
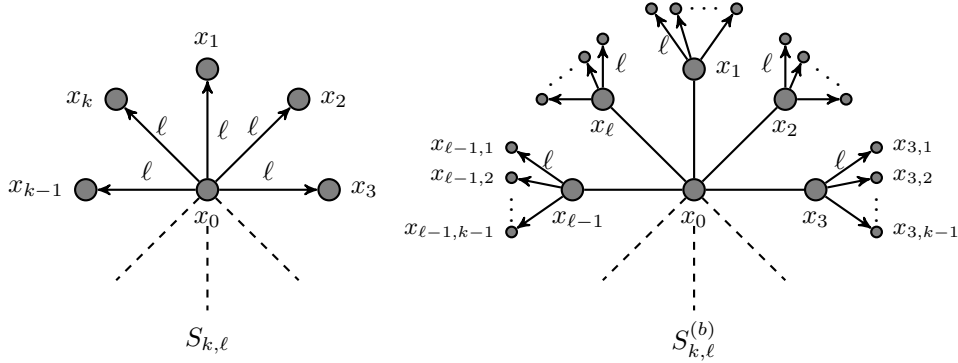
\begin{figure}[h]
\centering
\begin{tikzpicture}[>=stealth',thick,scale=0.8,el/.style = {inner sep=2pt, align=left, sloped}]%

\tikzmath{\t = 1;}

\node (w0)[label=below:$x_0$][circle, draw, fill=black!50,                        inner sep=0pt, minimum width=8pt] at (0,0) {};
\node (w1)[label=above:$x_1$][circle, draw, fill=black!50,
inner sep=0pt, minimum width=8pt] at (0,2) {};
\node (w2)[label=right:$x_2$][circle, draw, fill=black!50,
inner sep=0pt, minimum width=8pt] at (1.5,1.5) {};
\node (w3)[label=right:$x_3$][circle, draw, fill=black!50,
inner sep=0pt, minimum width=8pt] at (2,0) {};
\node (w4)[label=left:$x_{k-1}$][circle, draw, fill=black!50,
inner sep=0pt, minimum width=8pt] at (-2,0) {};
\node (w5)[label=left:$x_{k}$][circle, draw, fill=black!50,
inner sep=0pt, minimum width=8pt] at (-1.5,1.5) {};

\path[->]
	(w0)  edge node[right]{$\ell$}         (w1)
	(w0)  edge node[above]{$\ell$}         (w2)
	(w0)  edge node[above]{$\ell$}         (w3)
	(w0)  edge node[above]{$\ell$}         (w4)
    (w0)  edge node[above]{$\ell$}         (w5);
\draw [dashed] (w0) -- (1.5,-1.5);
\draw [dashed] (w0) -- (-1.5,-1.5);
\draw [dashed] (0,-0.6) -- (0,-2);
\node (lab) at (0,-2.5) {$S_{k,\ell}$};

\node (lab) at (\t+7,-2.5) {$S^{(b)}_{k,\ell}$};
\node (v0)[label=below:$x_0$][circle, draw, fill=black!50,
inner sep=0pt, minimum width=8pt] at (\t+7,0) {};
\node (v1)[label=right:$x_1$][circle, draw, fill=black!50,
inner sep=0pt, minimum width=8pt] at (\t+7,2) {};
\node (v2)[label=below:$x_2$][circle, draw, fill=black!50,
inner sep=0pt, minimum width=8pt] at (\t+8.5,1.5) {};
\node (v3)[label=below:$x_3$][circle, draw, fill=black!50,
inner sep=0pt, minimum width=8pt] at (\t+9,0) {};
\node (v4)[label=below:$\:\:x_{\ell-1}$][circle, draw, fill=black!50,
inner sep=0pt, minimum width=8pt] at (\t+5,0) {};
\node (v5)[label=below:$x_{\ell}$][circle, draw, fill=black!50,
inner sep=0pt, minimum width=8pt] at (\t+5.5,1.5) {};

\node (x)[label=left:{\small $x_{\ell-1,1}$}][circle, draw, fill=black!50, inner sep=0pt, minimum width=4pt] at (\t+4,0.7) {};
\node (y)[label=left:{\small $x_{\ell-1,2}$}][circle, draw, fill=black!50, inner sep=0pt, minimum width=4pt] at (\t+4,0.2) {};
\node (z)[label=left:{\small $x_{\ell-1,k-1}$}][circle, draw, fill=black!50, inner sep=0pt, minimum width=4pt] at (\t+4,-0.7) {};
\draw [->] (v4) -- (x); \draw [->] (v4) -- (y); \draw [->] (v4) -- (z);
\node (lab) at (\t+4,-0.2) {$\vdots$}; \node (lab) at (\t+4.6,0.5) {$\ell$};

\node (x)[label=right:{\small $x_{3,1}$}][circle, draw, fill=black!50, inner sep=0pt, minimum width=4pt] at (\t+10,0.7) {};
\node (y)[label=right:{\small $x_{3,2}$}][circle, draw, fill=black!50, inner sep=0pt, minimum width=4pt] at (\t+10,0.2) {};
\node (z)[label=right:{\small $x_{3,k-1}$}][circle, draw, fill=black!50, inner sep=0pt, minimum width=4pt] at (\t+10,-0.7) {};
\draw [->] (v3) -- (x); \draw [->] (v3) -- (y); \draw [->] (v3) -- (z);
\node (lab) at (\t+10,-0.2) {$\vdots$}; \node (lab) at (\t+9.4,0.5) {$\ell$};

\node (x)[][circle, draw, fill=black!50, inner sep=0pt, minimum width=4pt] at (\t+5.5,2.5) {};
\node (y)[][circle, draw, fill=black!50, inner sep=0pt, minimum width=4pt] at (\t+5.2,2.2) {};
\node (z)[][circle, draw, fill=black!50, inner sep=0pt, minimum width=4pt] at (\t+4.5,1.5) {};
\draw [->] (v5) -- (x); \draw [->] (v5) -- (y); \draw [->] (v5) -- (z);
\node (lab) at (\t+4.8,2) {$\iddots$}; \node (lab) at (\t+5.8,2.1) {$\ell$};

\node (x)[][circle, draw, fill=black!50, inner sep=0pt, minimum width=4pt] at (\t+8.5,2.5) {};
\node (y)[][circle, draw, fill=black!50, inner sep=0pt, minimum width=4pt] at (\t+8.8,2.2) {};
\node (z)[][circle, draw, fill=black!50, inner sep=0pt, minimum width=4pt] at (\t+9.5,1.5) {};
\draw [->] (v2) -- (x); \draw [->] (v2) -- (y); \draw [->] (v2) -- (z);
\node (lab) at (\t+9.2,2) {$\ddots$}; \node (lab) at (\t+8.2,2.1) {$\ell$};

\node (x)[][circle, draw, fill=black!50, inner sep=0pt, minimum width=4pt] at (\t+6.3,3) {};
\node (y)[][circle, draw, fill=black!50, inner sep=0pt, minimum width=4pt] at (\t+6.7,3) {};
\node (z)[][circle, draw, fill=black!50, inner sep=0pt, minimum width=4pt] at (\t+7.7,3) {};
\draw [->] (v1) -- (x); \draw [->] (v1) -- (y); \draw [->] (v1) -- (z);
\node (lab) at (\t+7.2,3) {$\cdots$}; \node (lab) at (\t+6.5,2.4) {$\ell$};

\path[-]
	(v0)  edge node[right]{}         (v1)
	(v0)  edge node[above]{}         (v2)
	(v0)  edge node[above]{}         (v3)
	(v0)  edge node[above]{}         (v4)
    (v0)  edge node[above]{}         (v5);
\draw [dashed] (v0) -- (\t+8.5,-1.5);
\draw [dashed] (v0) -- (\t+5.5,-1.5);
\draw [dashed] (\t+7,-0.6) -- (\t+7,-2);

\end{tikzpicture}
\caption{The graph $S_{k,\ell}$ and its bristled graph $S^{(b)}_{k,\ell}$. The group $B(S_{k,\ell})$ is generated by the $x_i$ with the $k$ defining relations $[x_0, x_i^\ell] = 1$ for $1 \leq i \leq k$. The group $B(S^{(b)}_{k,\ell})$ is generated by the $x_i$ and $x_{i,j}$, with the defining relations $[x_0, x_i] = 1$ and $[x_i, x_{i,j}^\ell] = 1$ for all $1 \leq i \leq k$ and $1 \leq j \leq \ell$. In Proposition~\ref{Prop:Star-BS-contains-P3ell}, we will prove that $B(S_{k,\ell})$ has an index $\ell$ normal subgroup isomorphic to $B(S^{(b)}_{k,\ell})$.}
\label{Fig:BS-star}
\end{figure}

We will show that $B(S_{k,\ell})$ contains a subgroup isomorphic to $B(P_{3,\ell})$  whenever $k, \ell \geq 2$. Note that $S^{(b)}_{k,\ell}$ contains an induced subgraph isomorphic to $P_{3,\ell}$ -- indeed, take the subgraph induced on $\{ x_0, x_i, x_{i,j} \}$ for any $i, j \geq 1$. Hence $B(S^{(b)}_{k,\ell})$ contains a subgroup isomorphic to $B(P_{3,\ell})$ (namely, the group generated by $x_0, x_i$, and $x_{i,j}$).

\begin{proposition}\label{Prop:Star-BS-contains-P3ell}
Let $G = B(S_{k,\ell})$, and let $H = B(S^{(b)}_{k,\ell})$, where $k, \ell \geq 2$. Then $H$ is isomorphic to an index $\ell$ normal subgroup of $G$. In particular, $G$ contains a subgroup isomorphic to $P_{3,\ell}$.
\end{proposition}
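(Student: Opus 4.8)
The plan is to realise $H = B(S^{(b)}_{k,\ell})$ as the kernel of a suitable homomorphism $G \to C_\ell$ and then recover its presentation by the Reidemeister--Schreier method, exactly as in Proposition~\ref{Prop:BGamma_1-contains-P4}. The crucial choice — and the one that makes the computation come out to a \rabsag{} rather than to something with torsion in its abelianisation — is to \emph{unfold along a single leaf} of the star rather than along its centre. Concretely, I would define $\phi \colon G \to C_\ell = \langle t \mid t^\ell = 1\rangle$ by $x_1 \mapsto t$ and $x_i \mapsto 1$ for all $i \neq 1$; every defining relator $[x_0, x_i^\ell]$ maps to $1$ (for $i=1$ because $t^\ell = 1$, for $i \neq 1$ because $x_i \mapsto 1$), so $\phi$ is a well-defined surjection and $H := \ker \phi$ is normal of index $\ell$. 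A Schreier transversal is the prefix-closed set $T = \{1, x_1, \dots, x_1^{\ell-1}\}$.

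Next I would write down the Schreier generators. Since $x_0, x_2, \dots, x_k \in H$, these are $c := x_1^\ell = s(x_1^{\ell-1}, x_1)$, together with $a_p := x_1^p x_0 x_1^{-p}$ and $b^{(j)}_p := x_1^p x_j x_1^{-p}$ for $2 \le j \le k$ and $0 \le p \le \ell-1$; all remaining $s(x_1^p, x_1)$ with $p < \ell-1$ are trivial. This already gives the correct count $1 + \ell + (k-1)\ell = k\ell + 1$ of vertices of $S^{(b)}_{k,\ell}$, with $c$ playing the role of the centre, the $a_p$ the $\ell$ arms, and the $b^{(j)}_p$ the $k-1$ bristles on the $p$-th arm. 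It then remains to rewrite the relators $\tau(x_1^p R_i x_1^{-p})$, where $R_i = [x_0, x_i^\ell]$.

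The two families of relators behave differently, and the relator $R_1 = x_0 x_1^\ell x_0^{-1} x_1^{-\ell}$ carrying the labelled edge is where the one genuine subtlety lies. Here I would use the factorisation in $F_A$
\[
x_1^p R_1 x_1^{-p} = \bigl(x_1^p x_0 x_1^{-p}\bigr)\bigl(x_1^{p+\ell} x_0^{-1} x_1^{-(p+\ell)}\bigr),
\]
together with the identity $x_1^{p+\ell} x_0 x_1^{-(p+\ell)} = x_1^\ell\,(x_1^p x_0 x_1^{-p})\,x_1^{-\ell} = c\, a_p\, c^{-1}$, so that the whole word is the product of Schreier generators $a_p\, c\, a_p^{-1}\, c^{-1}$. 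By the rewriting shortcut this yields precisely the relation $[a_p, c] = 1$ for each $p$; the point is that the ``wrap-around'' of $x_1^{p+\ell}$ past the end of the transversal is exactly what manufactures the generator $c$ and turns the labelled edge at the centre of $S_{k,\ell}$ into the \emph{ordinary} centre-to-arm edges of $S^{(b)}_{k,\ell}$. For $j \ge 2$ the computation is routine, since $x_1$ and $x_j$ do not interact: the word $x_1^p R_j x_1^{-p}$ factors as $a_p (b^{(j)}_p)^\ell a_p^{-1} (b^{(j)}_p)^{-\ell}$, giving $[a_p, (b^{(j)}_p)^\ell] = 1$, i.e.\ the $\ell$-labelled arm-to-bristle edges.

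Assembling \eqref{Eq:RM-method}, I obtain
\[
H \cong \pres{Gp}{c,\, a_p,\, b^{(j)}_p}{[a_p, c] = 1,\ [a_p, (b^{(j)}_p)^\ell] = 1},
\]
which is by inspection $B(\Gamma)$ for the graph $\Gamma$ with central vertex $c$, its $\ell$ neighbours $a_0, \dots, a_{\ell-1}$ joined by unlabelled edges, and each $a_p$ carrying $k-1$ bristles joined by $\ell$-labelled edges — that is, $\Gamma = S^{(b)}_{k,\ell}$. This proves that $H \cong B(S^{(b)}_{k,\ell})$ is a normal index-$\ell$ subgroup of $G$. The final clause is then immediate from the remark preceding the statement: $S^{(b)}_{k,\ell}$ contains $P_{3,\ell}$ as an induced subgraph (take $\{c, a_p, b^{(j)}_p\}$ for any $p,j$), so $B(P_{3,\ell}) \le H \le G$. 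I expect the only real obstacle to be the bookkeeping: getting the $R_1$ wrap-around right and matching the index ranges $(p,j)$ to the arm/bristle labelling of $S^{(b)}_{k,\ell}$; once the correct leaf-unfolding is chosen, everything else is a direct Reidemeister--Schreier calculation.
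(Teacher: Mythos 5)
Your proposal is correct and follows essentially the same route as the paper's own proof: the paper likewise unfolds along a single leaf (it distinguishes $x_k$ where you take $x_1$), uses the transversal $\{1, x_1, \dots, x_1^{\ell-1}\}$, and obtains exactly your Schreier generators and the relators $[a_p, c] = 1$ and $[a_p, (b^{(j)}_p)^\ell] = 1$ via the same wrap-around and insertion computations. The only cosmetic difference is that you certify normality and index $\ell$ via the explicit surjection $\phi \colon G \to C_\ell$, whereas the paper defines the subgroup by its generators and computes the quotient presentation directly; these are interchangeable.
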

\begin{proof}
We distinguish the generator $x_k$ and let $K$ be the subgroup 
\[
K = \langle x_k^\ell, x_k^i x_j x_k^{-i} \: (0 \leq j \leq k-1, \: i \in \mathbb{Z}) \rangle.
\]
Then $K$ is normal, and $G / K \cong \pres{Gp}{x_k}{x_k^\ell = 1}$. Hence $K$ has index $\ell$ in $G$, and a Schreier transversal for $K$ in $G$ is given by $T = \{ 1, x_k, \dots, x_k^{\ell-1} \}$. We will prove that $H \cong K$, yielding the result.

Letting $s_{i,j} = x_k^i x_j \overline{x_k^i x_j}^{-1}$, then $K$ is generated by the $s_{i,j}$ for $0 \leq i \leq \ell-1$ and $0 \leq j \leq k$. If $j < k$, then $x_k^i x_j x_k^{-i} \in H$, so $\overline{x_k^i x_j} = x_k^i$ for all $0 \leq i \leq \ell$. If $j=k$, then $\overline{x_k^i x_j} = x_k^{i+1 \pmod \ell}$. Hence, we find the generators $s_{i,j}$ to be 
\[
s_{i,j} = \begin{cases*} 1 & if $j=k$ and $0 \leq i < \ell-1$, \\
x_k^\ell, & if $j=k$ and $i=\ell-1$, \\ x_k^i x_j x_k^{-i} & if $0 \leq j \leq k-1$.
\end{cases*}
\]
Let $\tau$ be the Reidemeister-Schreier rewriting process which rewrites an element of $K$ into the generators $s_{i,j}$. Let $\alpha = x_k^\ell$. For $0 \leq i \leq \ell-1$, let $\beta_i = x_k^i x_0 x_k^{-i}$, and for $1 \leq j \leq k-1$ let $\beta_{i,j} = x_k^i x_j x_k^{-i}$. Then $K$ is generated by $\alpha$ and the $x_i, x_{i,j}$, and has as defining relators $\tau(x_k^i [x_0, x_j^\ell] x_k^{-i})$ for $0 \leq i \leq \ell-1$ and $1 \leq j \leq k$. 

Using the Reidemeister-Schreier method to find $\tau(x_k^i [x_0, x_j^\ell] x_k^{-i})$, we first consider the case when $j=k$, and find, by simple insertions of $x_k^{-i} x_k^i$-pairs, that 
\[
\tau(x_k^i [x_0, x_k^\ell] x_k^{-i})  = \tau\left((x_k^i x_0 x_k^{-i})(x_k^\ell)(x_k^ix_0^{-1} x_k^{-i}) (x_k^{-\ell}) \right) = \beta_i \alpha \beta_i^{-1} \alpha^{-1} = [\beta_i, \alpha].
\]
If, on the other hand, $j\neq k$, then 
\begin{align*}
\tau\left(x_k^i [x_0, x_j^\ell] x_k^{-i}  \right) &\equiv \tau\left( x_k^i x_0 x_j^\ell x_0^{-1} x_j^{-\ell} x_k^{-i} \right) \\
&= \tau\left( x_k^i x_0 (x_k^{-i} x_k^i) x_j^\ell (x_k^{-i} x_k^i) x_0^{-1} (x_k^{-i} x_k^i) x_j^{-\ell} x_k^{-i} \right) \\
&= \tau\left( (x_k^i x_0 x_k^{-i})(x_k^i x_j^\ell x_k^{-i})(x_k^i x_0^{-1} x_k^{-i})(x_k^i x_j^{-\ell} x_k^{-i}) \right) \\
&= \beta_i \beta_{i,j}^\ell \beta_i^{-1} \beta_{i,j}^{-\ell} = [\beta_i, \beta_{i,j}^\ell].
\end{align*}
Thus a presentation for $K$ is 
\[
K \cong \pres{Gp}{\alpha, \{ \beta_i, \beta_{i,j}\}_{i,j}}{[\alpha, \beta_i] = [\beta_i, \beta_{i,j}] = 1, \quad \text{for all $0 \leq i \leq \ell-1$ and $1 \leq j \leq k-1$.}}
\]
This group is now isomorphic to $H = B(S^{(b)}_{k,\ell})$, and we are done.
\end{proof}

Note that if $k=1$, then $B(S_{k,\ell}) = \BS(\ell,\ell)$, and the submonoid membership problem is decidable, as $\BS(\ell,\ell)$ has decidable rational subset membership problem, being a finite extension of the right-angled Artin group $F_n \times \mathbb{Z}$ which has decidable rational subset membership problem by Theorem~\ref{Thm:Lohrey-Steinberg}, and decidability of this problem is preserved by taking finite extensions \cite{Grunschlag1999}. If instead $\ell=1$, then $B(S_{k,\ell})$ is a right-angled Artin group defined by a star graph, and has decidable rational subset membership problem by Theorem~\ref{Thm:Lohrey-Steinberg}. Hence, we find the following characterisation. 

\begin{corollary}\label{Cor:BS-star-boundary-for-SMMP}
Let $\mathcal{P}$ be either the submonoid membership problem, or the rational subset membership problem. Then $\mathcal{P}$ is decidable in $B(S_{k,\ell})$ if and only if $k=\ell=1$.
\end{corollary}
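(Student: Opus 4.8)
The plan is to prove the biconditional by separating the two implications around the candidate point $(k,\ell)=(1,1)$. The backward direction is immediate: when $k=\ell=1$, the graph $S_{1,1}$ is a single edge carrying the label $1$, so by property~(3) of \S\ref{Subsec:RABSAG-intro} the group $B(S_{1,1})$ is the \raag{} on an edge, that is $\mathbb{Z}^2$. Both the submonoid membership problem and the rational subset membership problem are decidable in every finitely generated abelian group, so $\mathcal{P}$ is decidable in this case.

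For the forward direction I would argue contrapositively: given $(k,\ell)\neq(1,1)$, the goal is to show that $\mathcal{P}$ is undecidable in $B(S_{k,\ell})$. The engine is Lemma~\ref{Lem:SMMP-in-BP3k-and-induced}, which propagates undecidability of the submonoid membership problem from $B(P_{3,\ell})$ to any $B(\Gamma)$ containing $P_{3,\ell}$ as an induced subgraph, combined with Theorem~\ref{Thm:Lohrey-Steinberg}(3) relating the rational subset and submonoid membership problems. In the bulk region $k\geq 2$ and $\ell\geq 2$, Proposition~\ref{Prop:Star-BS-contains-P3ell} provides an index-$\ell$ normal subgroup $K\trianglelefteq B(S_{k,\ell})$ with $K\cong B(S^{(b)}_{k,\ell})$, and the bristled graph $S^{(b)}_{k,\ell}$ contains an induced copy of $P_{3,\ell}$ on the triple $x_0,x_i,x_{i,j}$. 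Thus Lemma~\ref{Lem:SMMP-in-BP3k-and-induced} makes $\mathcal{P}$ undecidable in $K$, and since decidability of these membership problems is inherited on passage between a group and its finite-index subgroups, $\mathcal{P}$ is undecidable in $B(S_{k,\ell})$ itself.

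The main obstacle is the boundary, namely the two families in which exactly one of $k,\ell$ equals $1$. When $\ell=1$ the graph $S_{k,1}$ is an unlabelled star, and when $k=1$ it is a single Baumslag--Solitar edge with $B(S_{1,\ell})\cong\BS(\ell,\ell)$; in neither case does $S_{k,\ell}$ carry an induced $P_{3,\ell}$ with $\ell\geq 2$, and the finite-index subgroup of Proposition~\ref{Prop:Star-BS-contains-P3ell} degenerates, so the embedding strategy above yields no obstruction. The decisive and most delicate step in pinning the decidable locus to the single point $(1,1)$ is therefore to analyse whether the Lohrey--Steinberg obstruction survives along this boundary --- that is, whether the collapse to a $P_4$-free star or to $\BS(\ell,\ell)$ genuinely eludes it --- since it is precisely this analysis that fixes the location of the decidability frontier asserted by the characterisation. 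I expect essentially all of the difficulty to concentrate here; the bulk region follows routinely from the embedding and finite-extension arguments above.
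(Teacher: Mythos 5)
Your backward direction and your treatment of the bulk region $k,\ell\geq 2$ are correct and follow exactly the paper's route (Proposition~\ref{Prop:Star-BS-contains-P3ell}, Lemma~\ref{Lem:SMMP-in-BP3k-and-induced}, and the fact that undecidability of submonoid membership in a subgroup, hence of the more general rational subset membership, passes to the ambient group). But the boundary you leave open is not merely delicate --- it is fatal to the ``only if'' direction as you set it up. When exactly one of $k,\ell$ equals $1$, both problems are \emph{decidable}: $B(S_{k,1})$ is the right-angled Artin group on a star, i.e.\ $F_k\times\mathbb{Z}$, whose underlying graph is a transitive forest (in particular $P_4$-free and $C_4$-free), so both problems are decidable by Theorem~\ref{Thm:Lohrey-Steinberg}; and $B(S_{1,\ell})\cong\BS(\ell,\ell)$ is a finite extension of $F_\ell\times\mathbb{Z}$, and decidability of rational subset membership (of which submonoid membership is a special case) is preserved under finite extensions \cite{Grunschlag1999}. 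So no further ``analysis of the Lohrey--Steinberg obstruction'' along the boundary can rescue your contrapositive: the obstruction genuinely disappears there, and the undecidability you hope to establish is false.

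The resolution is that the corollary as printed contains a slip, and the paper's own proof --- the paragraph immediately preceding the corollary statement --- proves exactly the two boundary decidability facts above, combining them with the undecidability for $k,\ell\geq 2$ from Proposition~\ref{Prop:Star-BS-contains-P3ell}. The intended characterisation is therefore ``$\mathcal{P}$ is decidable in $B(S_{k,\ell})$ if and only if $k=1$ or $\ell=1$'' (equivalently $\min(k,\ell)=1$), not ``$k=\ell=1$''. Your proposal correctly reproduces the undecidable half of the paper's argument, and your instinct that all the difficulty concentrates on the boundary is sound; but by trusting the misprinted statement you aim to prove undecidability precisely where the paper (correctly) proves decidability, so the proof as planned cannot be completed.
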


In view of Theorem~\ref{Thm:Lohrey-Steinberg}(3), the following question becomes natural:

\begin{question}
Is the submonoid membership problem recursively equivalent to the rational subset membership problem in all right-angled Baumslag-Solitar-Artin groups?
\end{question}

We now turn to a brief study of the subgroup membership problem.

\subsection{The subgroup membership problem}\label{Subsec:SGMP}

Kambites \cite{Kambites2009} has proved that if $\Gamma$ is a chordal graph (i.e. $C_4$-free), then $A(\Gamma)$ does not contain a subgroup isomorphic to $A(C_4)$. We now exhibit a chordal graph $\Gamma_2$ such that $B(\Gamma_2)$ contains a finite index subgroup isomorphic to $A(\Gamma_2')$ with $\Gamma_2'$ not chordal. This demonstrates even further that the subgroup structure of \rabsags{} seems significantly more complicated than that of right-angled Artin groups.

\begin{proposition}\label{Prop:Chordal-RABSAG-contains-C4}
Let $G = B(\Gamma_2)$, and let $H = A(\Gamma_2')$, where $\Gamma_2, \Gamma_2'$ are the graphs
\begin{center}
\begin{tikzpicture}[>=stealth',thick,scale=0.8,el/.style = {inner sep=2pt, align=left, sloped}]%

                        \node (w0)[label=above:$w_0$][circle, draw, fill=black!50,
                        inner sep=0pt, minimum width=8pt] at (-1,1) {};
                        \node (w1)[label=above:$w_1$][circle, draw, fill=black!50,
                        inner sep=0pt, minimum width=8pt] at (1,1) {};
                        \node (w2)[label=below:$w_2$][circle, draw, fill=black!50,
                        inner sep=0pt, minimum width=8pt] at (1,-1) {};
                        \node (w3)[label=below:$w_3$][circle, draw, fill=black!50,
                        inner sep=0pt, minimum width=8pt] at (-1,-1) {};
\path[->] 
    (w0)  edge node[el,above]{$2$}         (w2);
\path[-]
	(w0)  edge node[above]{}         (w1)
	(w1)  edge node[above]{}         (w2)
	(w2)  edge node[above]{}         (w3)
	(w3)  edge node[above]{}         (w0);
	\node (lab) at (0, -2) {$\Gamma_2$};
	
\node (b0)[label=above:$\beta_0$][circle, draw, fill=black!50,                        						inner sep=0pt, minimum width=8pt] at (5,1.5) {};
\node (b1)[label=right:$\beta_1$][circle, draw, fill=black!50,
                        inner sep=0pt, minimum width=8pt] at (6.3,0.25) {};
\node (b2)[label=below:$\beta_2$][circle, draw, fill=black!50,
                        inner sep=0pt, minimum width=8pt] at (6.3,-1) {};
\node (b3)[label=below:$\beta_3$][circle, draw, fill=black!50,
                        inner sep=0pt, minimum width=8pt] at (3.7,-1) {};
\node (b4)[label=left:$\beta_4$][circle, draw, fill=black!50,
                        inner sep=0pt, minimum width=8pt] at (3.7,0.25) {};

\path[-]
	(b0)  edge node[above]{}         (b1)
	(b0)  edge node[above]{}         (b2)
	(b0)  edge node[above]{}         (b4)
	(b0)  edge node[above]{}         (b3)
	(b1)  edge node[above]{}         (b2)
	(b1)  edge node[above]{}         (b4)
	(b2)  edge node[above]{}         (b3)
	(b4)  edge node[above]{}         (b3);
	\node (lab) at (5, -2) {$\Gamma_2'$};
\end{tikzpicture}
\end{center}
respectively. Then $H$ is isomorphic to an index $2$ subgroup of $G$, and $H \cong \mathbb{Z} \times F_2 \times F_2$. In particular, $A(C_4) \leq G$, and the subgroup membership problem is undecidable in $G$. 
\end{proposition}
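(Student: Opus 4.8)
The plan is to mimic the Reidemeister--Schreier computations of Propositions~\ref{Prop:BGamma_1-contains-P4} and~\ref{Prop:Star-BS-contains-P3ell}, unwrapping the single labelled diagonal edge $w_0 \xrightarrow{2} w_2$ of $\Gamma_2$ by passing to an index-$2$ subgroup. First I would define the homomorphism $\phi \colon G \to \mathbb{Z}/2\mathbb{Z}$ by $w_2 \mapsto 1$ and $w_0, w_1, w_3 \mapsto 0$, and check that each of the five defining relators $[w_0,w_1]$, $[w_1,w_2]$, $[w_2,w_3]$, $[w_3,w_0]$, $[w_0,w_2^2]$ lies in $\ker\phi$; this is immediate, the only one using the label $2$ being $[w_0,w_2^2]$, whose $w_2$-exponent sum is $2 \equiv 0$. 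Thus $H := \ker\phi$ is normal of index $2$, with Schreier transversal $T = \{1, w_2\}$.

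Second, I would compute the Schreier generators $s_{t,x} = t x \overline{tx}^{-1}$ for $t \in T$ and $x \in \{w_0,w_1,w_2,w_3\}$. Since $w_0,w_1,w_3 \in H$ and $\overline{w_2}=w_2$, the nontrivial generators are $a_0 = w_0$, $a_1 = w_1$, $a_3 = w_3$, $c = w_2^2$, together with the conjugates $b_0 = w_2 w_0 w_2^{-1}$, $b_1 = w_2 w_1 w_2^{-1}$, $b_3 = w_2 w_3 w_2^{-1}$ (seven in total). I would then rewrite the ten relators $\tau(t R_i t^{-1})$ via the rewriting map $\tau$. This is routine book-keeping, and the salient outcomes are: the relators $[w_1,w_2]$ and $[w_2,w_3]$ collapse to $b_1 = a_1$ and $b_3 = a_3$; the relator $[w_0,w_2^2]$ and its conjugate give $[a_0,c] = [b_0,c] = 1$, while the conjugates of $[w_1,w_2]$ and $[w_2,w_3]$ give $[a_1,c]=[a_3,c]=1$; and the four remaining relators give exactly the commutations $[a_0,a_1] = [a_1,b_0] = [b_0,a_3] = [a_3,a_0] = 1$.

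Third, I would eliminate $b_1, b_3$ by Tietze transformations (using $b_1 = a_1$, $b_3 = a_3$), leaving the five generators $a_0, a_1, a_3, b_0, c$. By the relations above, $c$ commutes with all four other generators, and the four surviving commutations are precisely those of the $4$-cycle $a_0 - a_1 - b_0 - a_3 - a_0$ (with $a_0 \not\sim b_0$ and $a_1 \not\sim a_3$). Reading off the graph, the underlying undirected graph of the resulting presentation is the wheel obtained by coning $C_4$ with the central vertex $c$, i.e. precisely $\Gamma_2'$, under the identification $\beta_0 = c$ and $(\beta_1,\beta_2,\beta_3,\beta_4) = (a_0,a_1,b_0,a_3)$. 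Hence $H \cong A(\Gamma_2')$. Since $\Gamma_2'$ is the join $\{\beta_0\} \ast C_4$ and $C_4 = \overline{K_2} \ast \overline{K_2}$ is itself a join, the standard join decomposition of right-angled Artin groups gives $A(\Gamma_2') \cong \mathbb{Z} \times A(C_4) \cong \mathbb{Z} \times F_2 \times F_2$, as claimed.

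Finally, for the undecidability statement, the inclusions $A(C_4) = \langle a_0,a_1,a_3,b_0\rangle \le H \le G$ exhibit $A(C_4)$ as an explicitly (hence computably) generated subgroup of $G$; in particular $A(C_4) \le G$. As $C_4$ is not $C_4$-free, Theorem~\ref{Thm:Lohrey-Steinberg}(1) shows that the subgroup membership problem is undecidable in $A(C_4)$, and since every instance of that problem is a special instance of the subgroup membership problem for $G$ (all data lying inside the subgroup $A(C_4)$), the latter is undecidable as well. The hard part will be purely the careful rewriting of the conjugated relator $\tau(w_2 [w_0,w_2^2] w_2^{-1})$, where one must track the coset representative of every intermediate prefix across the several occurrences of $w_2$; the genuinely new content — choosing this particular index-$2$ subgroup and recognizing the output graph as the cone over $C_4$ — is short but is the conceptual heart of the argument.
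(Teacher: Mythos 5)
Your proposal is correct and follows essentially the same route as the paper: the paper takes exactly this index-$2$ normal subgroup (described there as $\langle w_1, w_2^2, w_3, w_2^i w_0 w_2^{-i}\rangle$, which coincides with your $\ker\phi$), uses the same Schreier transversal $\{1, w_2\}$, and asserts that the rewritten relators yield $A(\Gamma_2')$, leaving the details to the reader. Your write-up simply supplies those omitted details correctly -- including the Tietze eliminations $b_1 = a_1$, $b_3 = a_3$ needed before all relators become commutators, the join decomposition giving $\mathbb{Z}\times F_2\times F_2$, and the inheritance argument for undecidability -- so it is a faithful, fleshed-out version of the paper's own proof.
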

\begin{proof}
The proof follows along the same pattern as that of Proposition~\ref{Prop:BGamma_1-contains-P4}. Taking 
\[
H = \langle w_1, w_2^2, w_3, w_2^i w_0 w_2^{-i} \: (i \in \mathbb{Z}) \rangle,
\]
we find that $H$ is an index $2$ normal subgroup of $G$, with Schreier transversal $T = \{ 1, w_2 \}$. Just as in the proof of Proposition~\ref{Prop:BGamma_1-contains-P4}, we enumerate the generators and relators of $H$, and find that all such relators are commutators of generators, and is hence a right-angled Artin group, and that the underlying graph of this group is $\Gamma_2'$. We leave the details to the reader. 
\end{proof}
\begin{remark}
The subgroup membership problem is undecidable in general for two-relator groups, but is open for one-relator groups. It would be interesting to find a boundary for \rabsags{} for when the subgroup membership problem is decidable, in the spirit of the boundary given by Theorem~\ref{Thm:Lohrey-Steinberg} for right-angled Artin groups. 
\end{remark}

Recall that $B(P_{3,k})$ is the group defined by 
\[
B(P_{3,k}) = \pres{Gp}{a,b,c}{[a,b^k] = 1, [a,c] = 1}.
\]
This group has undecidable submonoid membership problem (Proposition~\ref{Prop:BGamma_1-contains-P4}) if and only if $k \geq 2$. Recall also the definition of the $\ell$-regular Baumslag-Solitar-Artin star with $k$ arms $S_{k,\ell}$, and that this has undecidable submonoid membership problem when $k \geq 2$ and $\ell \geq 2$.

\begin{question}\label{Question:SGMP-in-certain-rabsags}
\begin{enumerate}
\item Is the subgroup membership problem decidable in $B(P_{3,k})$ for $k \geq 2$? 
\item Is the subgroup membership problem decidable in $B(S_{k,\ell})$ for $k, \ell \geq 2$?
\end{enumerate}
\end{question}

We note that, as decidability of the subgroup membership problem is preserved under taking finite extensions \cite{Grunschlag1999}, a positive answer to Question~\ref{Question:SGMP-in-certain-rabsags}(2) would (by the results of \S\ref{Subsec:Moldavanskii-groups}) solve the subgroup membership problem in the one-relator groups
\[
G_{m,n} = \pres{Gp}{a,b}{[a^m, b^n] = 1},
\]
for which the author is at present not aware of any solution to the subgroup membership problem. 

Finally, we remark on a connection with the Diophantine problem. Ciobanu, Holt \& Rees proved (see \cite[Theorem~3.1]{Ciobanu2020}) that the Diophantine problem is decidable in any group which is virtually a direct product of hyperbolic groups. In particular, the Diophantine problem is decidable in any group which is virtually a right-angled Artin group $A(\Gamma)$ when $\Gamma$ is a star graph, as in this case $A(\Gamma) \cong F \times \mathbb{Z}$ for a finitely generated free group $F$. In particular (as we shall see in \S\ref{Subsec:Moldavanskii-groups}), this applies to, and solves the Diophantine problem in, the unimodular Baumslag-Solitar groups $\BS(m,m)$, which are virtually $F_m \times \mathbb{Z}$ when $m >0$. Furthermore, this solves the Diophantine problem in the \rabsag{} $B(\Gamma_2)$ in Proposition~\ref{Prop:Chordal-RABSAG-contains-C4}. We give the following conjecture in light of this:

\begin{conjecture}\label{Conj:all_rabsag_virtually_star_dec_DP}
The Diophantine problem is decidable in any group $G$ which is virtually a \rabsag{} $B(\Gamma)$ where the underlying undirected graph of $\Gamma$ is a star graph. 
\end{conjecture}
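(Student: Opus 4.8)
The plan is to reduce the conjecture to a single group and then attack that group via the right-angled Artin machinery. First I would invoke Levine's theorem \cite{Levine2021}: since decidability of the Diophantine problem is inherited by finite extensions, a group $G$ that is virtually $B(\Gamma)$ has decidable Diophantine problem as soon as $B(\Gamma)$ does. Writing the star graph as $S_{k,\ell}$, it therefore suffices to prove that every $B(S_{k,\ell})$ has decidable Diophantine problem. The boundary cases are already settled: for $k=1$ one has $B(S_{1,\ell}) = \BS(\ell,\ell)$, which is virtually $F_\ell \times \mathbb{Z}$, a direct product of hyperbolic groups, so Ciobanu, Holt \& Rees \cite{Ciobanu2020} applies; and for $\ell = 1$ the group $B(S_{k,1}) \cong F_k \times \mathbb{Z}$ is itself a right-angled Artin group, handled by \cite{Diekert2006} (or again by \cite{Ciobanu2020}). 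Thus I may assume $k, \ell \geq 2$.

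Second, the core of the argument would be to show that $B(S_{k,\ell})$ is \emph{virtually a right-angled Artin group}, after which \cite{Diekert2006} (decidability of the Diophantine problem in every \raag{}) together with Levine's theorem \cite{Levine2021} closes the case. The natural route, entirely in the spirit of the Reidemeister--Schreier computations of \S\ref{Subsec:SMMP}, is to pass to the kernel $K$ of a homomorphism onto a finite group chosen so as to ``linearise'' the $\ell$-labelled edges -- for instance $\phi \colon B(S_{k,\ell}) \to (\mathbb{Z}/\ell)^k$ sending $x_0 \mapsto 0$ and each $x_i$ to the $i$th standard basis vector $e_i$ -- and to compute a presentation of $K$ by Reidemeister--Schreier, hoping that every rewritten relator $\tau(t\,[x_0,x_i^\ell]\,t^{-1})$ simplifies to a commutator of Schreier generators. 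If this happens, then $K \cong A(\Gamma')$ for an explicit graph $\Gamma'$, and $K$ has finite index in $B(S_{k,\ell})$. Note that the resulting $\Gamma'$ must contain an induced copy of $P_4$: indeed, rational subset membership is undecidable in $B(S_{k,\ell})$ by Corollary~\ref{Cor:BS-star-boundary-for-SMMP}, and, being preserved under finite extensions \cite{Grunschlag1999}, it is therefore undecidable in $A(\Gamma')$, so $\Gamma'$ cannot be $P_4$-free by Theorem~\ref{Thm:Lohrey-Steinberg}. This is a useful consistency check that keeps the expected $\Gamma'$ honest.

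The hard part will be exactly this virtual-\raag{} step, which is flagged as an open question earlier in this section (``Is every \rabsag{} virtually a \raag?''). The obstruction is that the only finite-index reduction I currently control, the bristling operation of Proposition~\ref{Prop:Star-BS-contains-P3ell}, does \emph{not} lower the edge-labels: passing to the index-$\ell$ subgroup replaces $B(S_{k,\ell})$ by $B(S^{(b)}_{k,\ell})$, whose bristle edges are again labelled $\ell$, so a naive induction on $\ell$ never terminates. One therefore needs a genuinely different finite quotient (such as the $\phi$ above) together with a direct verification that the $\ell$-labels unwind into commuting generators; whether this is always possible is precisely the content of the open question. Should the virtual-\raag{} approach fail, an alternative worth pursuing is to bypass \raags{} altogether and prove the possibly more tractable statement that $B(S_{k,\ell})$ is virtually a direct product of hyperbolic groups, invoking \cite{Ciobanu2020} directly; here the amalgamated decomposition $B(S_{k,\ell}) \cong \BS(\ell,\ell) \ast_{\langle x_0\rangle} \cdots \ast_{\langle x_0\rangle} \BS(\ell,\ell)$ of $k$ copies of $\BS(\ell,\ell)$ over the cyclic subgroup $\langle x_0 \rangle$ suggests a graph-of-groups or virtually-special analysis, though it must again overcome the same label-control difficulty.
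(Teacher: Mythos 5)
First, be aware that the statement you are addressing is stated in the paper as a \emph{conjecture}: the paper offers no proof of it, and explicitly flags the step on which your whole plan hinges (``Is every \rabsag{} virtually a \raag{}?'') as an open question. So your proposal is not competing with a proof in the paper --- but it is also not a proof. The outer layers are fine and match the reductions the paper itself makes: Levine's theorem \cite{Levine2021} correctly reduces the conjecture from groups virtually $B(\Gamma)$ to the groups $B(\Gamma)$ themselves, and the degenerate cases $k=1$ and $\ell=1$ are correctly dispatched via \cite{Ciobanu2020} and \cite{Diekert2006}. Everything after that is a strategy whose central step --- that $B(S_{k,\ell})$ for $k,\ell \geq 2$ is virtually a \raag{}, or at least virtually a direct product of hyperbolic groups --- is left entirely unestablished, as you yourself acknowledge. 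Since that step \emph{is} the conjecture (modulo the easy reductions), what you have is a genuine gap, not a removable blemish.

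Second, the specific mechanism you propose stalls exactly where you fear it does, and one can see this concretely. Take $k=\ell=2$ and the kernel $K$ of $\phi \colon B(S_{2,2}) \to (\mathbb{Z}/2)^2$, with Schreier transversal $\{1, x_1, x_2, x_1x_2\}$. For a transversal element $t$ involving $x_2$, the rewritten relator is
\[
\tau\bigl(t\,[x_0,x_1^2]\,t^{-1}\bigr) \;=\; \bigl[\,\tau(t x_0 t^{-1}),\; \tau(t x_1^2 t^{-1})\,\bigr],
\]
and while $t x_0 t^{-1}$ is a single Schreier generator, $t x_1^2 t^{-1}$ is not: for $t = x_2$ one has $x_2 x_1^2 x_2^{-1} = [x_2,x_1]\cdot(x_1 x_2 x_1 x_2^{-1})$, a product of two distinct Schreier generators of the free part, precisely because $x_2$ does not commute with $x_1^2$ in the free group. (This is exactly why the $k=1$ computation for $\BS(\ell,\ell)$ succeeds --- there every conjugator is a power of $x_1$ and commutes with $x_1^\ell$ --- and why it does not generalise.) So the Reidemeister--Schreier presentation of $K$ has relators that are commutators of a generator with a \emph{word}; it is not a \raag{} presentation, and whether $K$ (or any finite-index subgroup) is abstractly a \raag{} is the actual open problem, untouched by the computation. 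Your $P_4$ consistency check is correct but gives no positive information. Finally, a smaller loss of generality: the conjecture allows the star's edges to carry distinct labels and either orientation (e.g.\ a relation $[x_2, x_0^3]=1$ is permitted), so even a complete argument for the homogeneous groups $B(S_{k,\ell})$ would not by itself settle the statement as written.
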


If this conjecture turns out to be true, then this would solve the Diophantine problem in a large class of one-relator groups, see \S\ref{Subsec:Moldavanskii-groups}. As a particular case of Conjecture~\ref{Conj:all_rabsag_virtually_star_dec_DP}, it would be interesting to know whether the Diophantine problem is decidable in the two-relator group $B(\Gamma_1)$ from Proposition~\ref{Prop:BGamma_1-contains-P4}, i.e. 
\[
B(\Gamma_1) \cong \pres{Gp}{x,y,z}{[x,y^2] = [x,z] = 1}.
\]
It bears repeating that the submonoid membership problem is undecidable in $B(\Gamma_1)$.

\subsection{Moldavanskii groups}\label{Subsec:Moldavanskii-groups}

\noindent In the following section, we will investigate a rather peculiar class of one-relator groups $G_{m,n}$, and relate the Diophantine problem for these groups to the Diophantine problem in certain \rabsags. We shall call these groups \textit{Moldavanskii-Tieudjo groups}, after the two authors who systematically investigated some of their properties \cite{Tieudjo1998, Tieudjo2005, Tieudjo2006, Tieudjo2008, Tieudjo2010, Tieudjo2010b}. 

Let $m, n \geq 1$. Then we define
\begin{equation}
G_{m,n} = \pres{Gp}{a,b}{[a^m, b^n] = 1}.
\end{equation}
and call $G_{m,n}$ a \textit{Moldavanskii-Tieudjo} group. Some facts are worth mentioning. Obviously, if $m=1$, then $G_{m,n} \cong \BS(n,n)$. Thus these groups arise as natural generalisations of the Baumslag-Solitar groups. Let $m,n>1$. Then $G_{m,n}$ is residually finite (this is an easy consequence of a theorem due to Baumslag \cite{Baumslag1983}); they are conjugacy separable \cite{Kim1995}, see also \cite{Tieudjo2010b}, and so in particular have decidable conjugacy problem; and the finitely generated abelian subgroups of $G_{m,n}$ are finitely separable \cite{Tieudjo2005}. Furthermore, using the Magnus-Moldavanskii hierarchy the group $G_{m,n}$ is an HNN-extension of the group $T_{m,m} = \pres{Gp}{a,b}{a^m b^m = 1}$, as studied in \S\ref{Sec:Warmup-torus-knots}. 

Using, again, the Reidemeister-Schreier method, we shall describe some finite index subgroups inside $G_{m,n}$. In particular, we will find that $G_{m,n}$ is virtually a \rabsag{} for all $m, n > 1$. Recall the definition of the Baumslag-Solitar Artin stars $S_{k,\ell}$ as defined in Figure~\ref{Fig:BS-star}. 

\begin{theorem}\label{Thm:[am,bn]-contains-BS-star}
Let $G_{m,n} = \pres{Gp}{a,b}{[a^m, b^n] = 1}$ with $m, n \geq 1$. Then $G_{m,n}$ contains an index $m$ normal subgroup isomorphic to $B(S_{m,n})$.
\end{theorem}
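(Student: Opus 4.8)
The plan is to follow the Reidemeister--Schreier template already used in Proposition~\ref{Prop:Torus-knot-virtually-is-Fn-Z} and Proposition~\ref{Prop:Star-BS-contains-P3ell}. The first step is to locate the correct index $m$ normal subgroup. Since the relator $[a^m, b^n]$ lies in the kernel of every homomorphism onto an abelian group, I would define $\phi \colon G_{m,n} \to C_m = \langle t \mid t^m = 1 \rangle$ by $a \mapsto t$ and $b \mapsto 1$; because $C_m$ is abelian the relation $[a^m, b^n] = 1$ is automatically respected, so $\phi$ is a well-defined epimorphism. Setting $H = \ker \phi$, the subgroup $H$ is normal of index $m$, consisting exactly of those elements whose total $a$-exponent is divisible by $m$, and $T = \{ 1, a, a^2, \dots, a^{m-1} \}$ is a Schreier transversal.

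Next I would run the Reidemeister--Schreier machine with this transversal. The coset action is $\overline{a^i a} = a^{i+1 \bmod m}$ and $\overline{a^i b} = a^i$, so the Schreier generators $s(a^i, a)$ all vanish except for $s(a^{m-1}, a) = a^m$, which I abbreviate $\alpha := a^m$, while $s(a^i, b) = a^i b a^{-i} =: \beta_i$ for $0 \leq i \leq m-1$. Thus $H$ is generated by $\alpha, \beta_0, \dots, \beta_{m-1}$, giving exactly $m+1$ generators, matching the vertex count of $S_{m,n}$.

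The computational heart---and the step most prone to bookkeeping errors---is rewriting the $m$ conjugated relators $\tau(a^i R a^{-i})$, where $R = a^m b^n a^{-m} b^{-n}$. Here I would invoke the product shortcut from \S\ref{Subsec:Reidemeister}: inserting trivial pairs $a^{-i} a^i$ shows that in $F_{\{a,b\}}$ one has the free-group identity $a^i R a^{-i} = \alpha\, \beta_i^{\,n}\, \alpha^{-1} \beta_i^{-n}$, each factor being (a power of) a single Schreier generator. Reading off the corresponding symbols then yields $\tau(a^i R a^{-i}) = [\alpha, \beta_i^{\,n}]$ for each $0 \leq i \leq m-1$, producing $m$ relations.

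Assembling this, the Reidemeister--Schreier presentation is
\[
H \cong \pres{Gp}{\alpha, \beta_0, \dots, \beta_{m-1}}{[\alpha, \beta_i^{\,n}] = 1 \:\: (0 \leq i \leq m-1)}.
\]
Finally, I would match this against the defining presentation of $B(S_{m,n})$ recorded in Figure~\ref{Fig:BS-star}, namely $\langle x_0, x_1, \dots, x_m \mid [x_0, x_i^{\,n}] = 1 \:\: (1 \leq i \leq m) \rangle$, via the relabelling $\alpha \mapsto x_0$ and $\beta_i \mapsto x_{i+1}$; the relations correspond precisely, so $H \cong B(S_{m,n})$. I expect no genuine obstacle beyond carefully verifying the free-group identity for the conjugated relators, and the degenerate case $m = 1$ checks out directly, since then $H = G_{1,n} = \BS(n,n) = B(S_{1,n})$.
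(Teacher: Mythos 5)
Your proposal is correct and takes essentially the same route as the paper: your kernel $H = \ker\phi$ coincides with the paper's subgroup $\langle a^m, a^i b a^{-i} \ (i \in \mathbb{Z})\rangle$, and the rest — the Schreier transversal $\{1, a, \dots, a^{m-1}\}$, the generators $\alpha = a^m$, $\beta_i = a^i b a^{-i}$, and the rewriting of the conjugated relators to $[\alpha, \beta_i^{\,n}]$ via insertion of $a^{-i}a^i$ pairs — matches the paper's computation exactly.
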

\begin{proof}
Consider the subgroup $H$ of $G$ generated by 
\[
H = \langle a^m, a^i b a^{-i} \: (i \in \mathbb{Z}) \rangle.
\]
Then $H$ is, despite first appearances, finitely generated, as any conjugate $a^i b a^{-i}$ is a product of powers of $a^m$ and $a^j b a^{-j}$ with $0 \leq j < m$. Furthermore, $H$ is evidently normal, and has finite index in $G$. Note that neither of the previous properties have any dependency on $G$ itself, but are true in any group generated by two elements $a$ and $b$. Now, 
\[
G/H \cong \pres{Gp}{a,b}{[a^m, b^n] = 1, b = 1, a^m = 1} \cong \pres{Gp}{a}{a^m =1}.
\]
Thus $H$ has index $m$ in $G$. A Schreier transversal for $H$ in $G$ is given by $T = \{ 1, a, \dots, a^{m-1} \}$. Using the standard Reidemeister-Schreier technique, we find a generating set 
\[
S = \{ tx \overline{tx}^{-1} \mid t \in T, x \in \{a, b\}\}
\]
for $H$. Let $s_{i,x}$ denote $a^i x \overline{a^ix}^{-1}$ for $0 \leq i \leq m-1$ and $x \in \{ a, b\}$. Then 

\[
s_{i,a} = \begin{cases*} 1 & if $0 \leq i < m-1$ \\ a^m & if $i = m-1$ \end{cases*}
\]
and, as $a^iba^{-i} \in H$ and thus $a^ib \in Ha^i$ for all $i \in \mathbb{Z}$, we have $\overline{a^ib} = a^{i}$. Hence 
\[
s_{i,b} = a^i b \overline{a^ib}^{-1} = a^i b a^{-i}.
\]
Let $\alpha$ denote the generator $s_{m-1,a}$, and let $\beta_{i+1}$ denote the generator $s_{i,b}$ for all $0 \leq i \leq m-1$. Then $H$ is generated by the set $\alpha, \beta_1, \beta_2, \dots, \beta_m$, and we can find a presentation for $H$ as usual. Let $\tau$ be a rewriting process for $\{a,b\}$-words to words over the new alphabet. Then a set of defining relators for $H$ is $\tau(a^i [a^m, b^n] a^{-i})$ for $0 \leq i \leq m-1$. But
\begin{align*}
\tau(a^i [a^m, b^n]a^{-i}) &= \tau\left(a^m a^i b^n a^{-m} b^{-n} a^{-i} \right) \\&= \tau\left( (a^m) (a^i b a^{-i})^n (a^m)^{-1} (a^i b a^{-i})^{-n} \right) \\ &= \alpha \beta_i^n \alpha^{-1} \beta_i^{-n}.
\end{align*}
That is, $H$ is defined by the presentation
\[
H \cong \pres{Gp}{\alpha, \beta_1, \dots, \beta_m}{[\alpha, \beta_i^n] = 1 \: (1 \leq i \leq m)},
\]
i.e. $H \cong B(S_{m,n})$. This is what was to be shown. 
\end{proof}

Taking the special case of $n=1$ above, as $G_{m,1} = \BS(m,m)$ and as $B(S_{m,1}) = F_m \times \mathbb{Z}$, the above Theorem~\ref{Thm:[am,bn]-contains-BS-star} provides a direct proof that $\BS(m,m)$ is virtually $F_m \times \mathbb{Z}$ for $m \geq 1$. This is not a new theorem, by any means -- see e.g. \cite{Levitt2015}, cf. also \cite{Meskin1972, Wise2000}. A direct proof via Bass-Serre theory is also not particularly difficult. For our purposes, we will however write out the following direct corollary:

\begin{proposition}\label{Prop:BS(m,m)-dec}
The Diophantine problem is decidable in $G_{m,1} \cong \BS(m,m)$ for all $m \in \mathbb{Z}$. 
\end{proposition}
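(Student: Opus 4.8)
The plan is to read the result off directly from Theorem~\ref{Thm:[am,bn]-contains-BS-star} together with the Ciobanu--Holt--Rees theorem. First I would specialise Theorem~\ref{Thm:[am,bn]-contains-BS-star} to the case $n=1$: this gives that for every $m \geq 1$ the group $G_{m,1}$ contains an index $m$ normal subgroup isomorphic to $B(S_{m,1})$. Since the defining relations of $B(S_{m,1})$ are $[x_0, x_i] = 1$ for $1 \leq i \leq m$, this group is precisely $\mathbb{Z} \times F_m = F_m \times \mathbb{Z}$, with the central $\mathbb{Z}$ generated by $x_0$ and the free factor by $x_1, \dots, x_m$. As $G_{m,1} \cong \BS(m,m)$, this exhibits $\BS(m,m)$ as virtually $F_m \times \mathbb{Z}$. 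Now $F_m \times \mathbb{Z}$ is a direct product of the two hyperbolic groups $F_m$ and $\mathbb{Z}$, so \cite[Theorem~3.1]{Ciobanu2020} applies and the Diophantine problem is decidable in $\BS(m,m)$ for all $m \geq 1$.

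It then remains to dispose of the cases $m = 0$ and $m < 0$ needed for the full range $m \in \mathbb{Z}$. For $m = 0$ the relation $b a^0 b^{-1} = a^0$ is vacuous, so $\BS(0,0) \cong F_2$ is free, hence hyperbolic, and its Diophantine problem is decidable by \cite{Dahmani2010} (or classically by Makanin). For $m < 0$ I would observe that the relation defining $\BS(m,m)$, namely $[b, a^m] = 1$, is equivalent to $[b, a^{-m}] = 1$, since an element commutes with $a^m$ if and only if it commutes with $a^{-m}$; hence $\BS(m,m) \cong \BS(-m,-m)$ via the identity map on generators, and this case reduces at once to the already-established case $-m > 0$. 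Combining the three cases yields the claim for all $m \in \mathbb{Z}$.

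I do not expect any genuine obstacle here: the substantive work is the Reidemeister--Schreier computation already carried out in Theorem~\ref{Thm:[am,bn]-contains-BS-star}, and the identification of $F_m \times \mathbb{Z}$ as a product of hyperbolic groups is immediate. The only point requiring a moment's care is that Theorem~\ref{Thm:[am,bn]-contains-BS-star} is stated for $m \geq 1$, so the boundary values $m \leq 0$ must be treated separately -- but each reduces instantly either to a free group or to the positive case. I would also note that one need not invoke Levine's finite-extension result \cite{Levine2021} here, since \cite{Ciobanu2020} already states decidability for groups that are merely \emph{virtually} a direct product of hyperbolic groups, which is exactly the form in which $\BS(m,m)$ arises.
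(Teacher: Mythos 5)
Your proposal is correct and follows essentially the same route as the paper: specialise Theorem~\ref{Thm:[am,bn]-contains-BS-star} to $n=1$, identify $B(S_{m,1})$ with $F_m \times \mathbb{Z}$, and invoke \cite[Theorem~3.1]{Ciobanu2020} for groups virtually a direct product of hyperbolic groups. Your explicit treatment of the boundary cases $m=0$ (giving $F_2$) and $m<0$ (reducing to $\BS(-m,-m)$ via inverting the relator) is a small but genuine improvement, since the paper states the proposition for all $m \in \mathbb{Z}$ while its supporting discussion only covers $m \geq 1$.
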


This has already been (implicitly) noted by Ciobanu, Holt \& Rees \cite[p.91]{Ciobanu2020}. A similar reasoning also solves the Diophantine problem in $\BS(m,-m)$. The reason for writing this out explicitly is to place Proposition~\ref{Prop:BS(m,m)-dec} in the wider context of the groups $G_{m,n}$. Indeed, if Conjecture~\ref{Conj:all_rabsag_virtually_star_dec_DP} holds, then Theorem~\ref{Thm:[am,bn]-contains-BS-star} implies that all groups $\pres{Gp}{a,b}{[a^m, b^n]=1}$ have decidable Diophantine problem. 

We now use Theorem~\ref{Thm:[am,bn]-contains-BS-star} to prove some corollaries. The first is an application to the submonoid membership problem in one-relator groups. As proved in Proposition~\ref{Prop:Star-BS-contains-P3ell} and Corollary~\ref{Cor:BS-star-boundary-for-SMMP}, the groups $B(S_{k,\ell})$ all have undecidable submonoid membership problem for $k, \ell \geq 2$. Hence, we conclude:

\begin{corollary}\label{Cor:Gmn-has-undec-SMMP}
Let $G_{m,n} = \pres{Gp}{a,b}{[a^m, b^n] = 1}$ with $m, n > 1$. Then the submonoid membership problem is undecidable in $G_{m,n}$. 
\end{corollary}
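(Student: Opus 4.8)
The plan is to deduce the result almost immediately from Theorem~\ref{Thm:[am,bn]-contains-BS-star} together with the undecidability of the submonoid membership problem in the Baumslag--Solitar--Artin stars, once we record the (essentially formal) fact that undecidability of the submonoid membership problem is inherited by any group from a finitely generated subgroup. So the real content of the corollary already lives in the earlier sections, and what remains is to glue the pieces together correctly.

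First I would invoke Theorem~\ref{Thm:[am,bn]-contains-BS-star} to fix an embedding $\iota \colon B(S_{m,n}) \hookrightarrow G_{m,n}$ onto an index-$m$ normal subgroup $H$. Since $m, n \geq 2$, Corollary~\ref{Cor:BS-star-boundary-for-SMMP} guarantees that the submonoid membership problem is undecidable in $B(S_{m,n})$, hence in $H$. (The underlying reason, traced back through Proposition~\ref{Prop:Star-BS-contains-P3ell} and Lemma~\ref{Lem:SMMP-in-BP3k-and-induced}, is that $B(S_{m,n})$ contains a copy of $B(P_{3,n})$ for $n \geq 2$.)

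The key step is then the reduction. Let $\{h_1, \dots, h_k\}$ be a finite subset of $H$ and let $h \in H$; each of these can be written explicitly as a word over the generators $\{a,b\}$ of $G_{m,n}$ via the embedding $\iota$, which is given concretely by the Reidemeister--Schreier generators appearing in the proof of Theorem~\ref{Thm:[am,bn]-contains-BS-star}. The submonoid $\langle h_1, \dots, h_k \rangle$ generated inside $G_{m,n}$ consists of products of the $h_i$, all of which lie in the subgroup $H$; hence this submonoid coincides, as a subset of $G_{m,n}$, with the submonoid generated by $\{h_1, \dots, h_k\}$ inside $H$. Since moreover $h \in H$, the question ``is $h \in \langle h_1, \dots, h_k \rangle$?'' has the same answer whether posed in $H$ or in $G_{m,n}$. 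Performing the rewriting of the input by substitution is effective, so this exhibits a computable many-one reduction of the submonoid membership problem in $H$ to that in $G_{m,n}$.

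Combining the three steps, the undecidability of submonoid membership in $H \cong B(S_{m,n})$ forces undecidability in $G_{m,n}$, which proves the corollary. The only point that requires any care ---and hence the main, if modest, obstacle--- is the reduction: one must observe that closure of $H$ under multiplication is precisely what makes the generated submonoid insensitive to whether it is computed in $H$ or in the overgroup, so that for this (easy) direction no finite-index hypothesis is even needed, only that $H$ is a subgroup with a computable embedding.
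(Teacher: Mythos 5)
Your proposal is correct and follows essentially the same route as the paper: combine Theorem~\ref{Thm:[am,bn]-contains-BS-star} (the index-$m$ copy of $B(S_{m,n})$ in $G_{m,n}$) with the undecidability of the submonoid membership problem in $B(S_{k,\ell})$ for $k,\ell \geq 2$ from Proposition~\ref{Prop:Star-BS-contains-P3ell} and Corollary~\ref{Cor:BS-star-boundary-for-SMMP}. The only difference is that you spell out the (correct) reduction showing that undecidability of submonoid membership passes from a computably embedded subgroup to the ambient group, a step the paper leaves implicit, as it does in Lemma~\ref{Lem:SMMP-in-BP3k-and-induced}.
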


The groups $G_{m,n}$ in Corollary~\ref{Cor:Gmn-has-undec-SMMP} are not the first to be proved to have undecidable submonoid membership problem. Indeed, Gray \cite{Gray2020} shows that the submonoid membership problem is undecidable in 
\begin{equation}\label{Eq:Bob-group}
B = \pres{Gp}{a,t}{[a, tat^{-1}] = 1} \cong \pres{Gp}{a,b}{[ab, ba]=1}.
\end{equation}
Now, defining $B_i = \pres{Gp}{a,t}{[a, t^i a t^{-i}] = 1}$ for $i \geq 1$, we have $B = B_1$, and $B_{i+1}$ is an HNN-extension of $B_i$ conjugating the subgroup $\langle a \rangle$. Just as in \eqref{Eq:Bob-group}, we can furthermore define $B_i$ as a one-relator group with defining relator a commutator of two positive words, i.e. 
\begin{equation}
B_i \cong \pres{Gp}{a,b}{[ab^i, b^ia] = 1} \quad \text{for $i \geq 1$.}\label{Eq:Bob-family}
\end{equation}
As $B_i \leq B_{i+1}$ for all $i \geq 1$, the submonoid membership problem is now undecidable in every (one-relator) group $B_i$. However, the defining relator $[ab^i, b^ia]$ is not a commutator $[u, v]$ of two words $u, v$ over disjoint alphabets. In particular, $B_i$ is not obviously decomposable a free product of two free groups with commuting subgroups. 

Recall (see \cite[p. 220, ex. 22]{Magnus1966}) that if $A, B$ are two groups with $H \leq A$ and $K \leq B$, then the group $\pres{Gp}{A, B}{[H, K] = 1}$ is called the \textit{free product of $A$ and $B$ with commuting subgroups $H$ and $K$}. Such products have seen some recent study, see e.g. \cite{Sokolov2014}. 

Note that the groups $G_{m,n}$ have the structure of a the free product of two infinite cyclic groups $\langle a \rangle$ and $\langle b \rangle$ with commuting subgroups $H = \langle a^m \rangle$ and $K = \langle b^n \rangle$ (both of finite index). Hence, Corollary~\ref{Cor:Gmn-has-undec-SMMP} gives the following additional corollary:

\begin{corollary}\label{Cor:FP-with-commuting-can-be-undec}
There exists a free product $G$ of two infinite cyclic groups with commuting subgroups such that the submonoid membership problem in $G$ is undecidable. 
\end{corollary}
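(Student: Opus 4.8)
The plan is to produce the example directly from Corollary~\ref{Cor:Gmn-has-undec-SMMP} by recognising a suitable $G_{m,n}$ as a free product with commuting subgroups. First I would fix $m = n = 2$ and set $G = G_{2,2} = \pres{Gp}{a,b}{[a^2, b^2] = 1}$, which by Corollary~\ref{Cor:Gmn-has-undec-SMMP} has undecidable submonoid membership problem; any choice of $m, n > 1$ would serve equally well.

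The key step is then to verify that $G$ genuinely has the structure of a free product of two infinite cyclic groups with commuting subgroups, in the precise sense recalled above. Taking $A = \langle a \rangle \cong \mathbb{Z}$ and $B = \langle b \rangle \cong \mathbb{Z}$, together with the subgroups $H = \langle a^2 \rangle \leq A$ and $K = \langle b^2 \rangle \leq B$, the free product of $A$ and $B$ with commuting subgroups $H$ and $K$ is by definition $\pres{Gp}{A, B}{[H, K] = 1}$. Since $H$ is cyclic generated by $a^2$ and $K$ is cyclic generated by $b^2$, the requirement that every element of $H$ commute with every element of $K$ is equivalent to the single relation $[a^2, b^2] = 1$: if $a^2$ and $b^2$ commute, then so do all of their integer powers, and conversely. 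Hence this free product with commuting subgroups is precisely $G_{2,2}$.

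Combining these two observations immediately yields the statement: $G = G_{2,2}$ is a free product of two infinite cyclic groups with commuting subgroups, and its submonoid membership problem is undecidable. There is essentially no obstacle to overcome here, as all of the difficulty has already been absorbed into Corollary~\ref{Cor:Gmn-has-undec-SMMP} and, upstream, into Proposition~\ref{Prop:Star-BS-contains-P3ell} and Corollary~\ref{Cor:BS-star-boundary-for-SMMP} via Theorem~\ref{Thm:[am,bn]-contains-BS-star}. The only point requiring care is the elementary algebraic reformulation of the commuting-subgroups relation as the single commutator relation $[a^2,b^2]=1$, which I would state explicitly so as to make transparent that the two group-theoretic descriptions of $G$ coincide.
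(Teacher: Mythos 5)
Your proposal is correct and follows essentially the same route as the paper: the paper likewise observes that $G_{m,n}$ (for $m,n>1$) is the free product of the infinite cyclic groups $\langle a \rangle$ and $\langle b \rangle$ with commuting subgroups $\langle a^m \rangle$ and $\langle b^n \rangle$, and then invokes Corollary~\ref{Cor:Gmn-has-undec-SMMP}. The only difference is cosmetic: you fix $m=n=2$ and spell out the elementary check that $[\langle a^2\rangle, \langle b^2\rangle]=1$ reduces to the single relation $[a^2,b^2]=1$, a detail the paper leaves implicit.
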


To the best of the author's knowledge, this the first known occurrence of such groups.

Finally, let $G_1, G_2$ be two groups with decidable submonoid membership problem. As mentioned in the remark following Theorem~\ref{Thm:Lohrey-Steinberg}, it is currently unknown whether $G_1 \ast G_2$ always has decidable submonoid membership problem. Corollary~\ref{Cor:FP-with-commuting-can-be-undec} does not give an answer to this question, but shows that already rather complicated nature of going beyond free products. It would be interesting to know whether the subgroup membership problem can be similarly undecidable in this situation.

\subsection{Application: subgroups of Baumslag-Solitar-Artin groups}\label{Subsec:applying-MT-to-rabsags}

The second application of Theorem~\ref{Thm:[am,bn]-contains-BS-star} will be a small insight into the subgroup structure of the \rabsag{} $B(P_{3,k})$ from Proposition~\ref{Prop:BGamma_1-contains-P4}. The way this is proved is via knowing the \raag{} subgroups of one-relator groups. Using our results on the Moldavanskii-Tieudjo groups $G_{m,n}$ from \S\ref{Subsec:Moldavanskii-groups}, we can deduce the following result about the \rabsag{} $B(P_{3,k})$, which makes no mention of one-relator groups.

\begin{corollary}\label{Cor:P3k-contains-only-good-raags}
Let $G = B(P_{3,k})$, where $k \geq 2$ and $P_{3,k}$ is the graph 
\begin{center}
\begin{tikzpicture}[>=stealth',thick,scale=1.0,el/.style = {inner sep=2pt, align=left, sloped}]%

                        \node (w0)[label=below:$w_0$][circle, draw, fill=black!50,
                        inner sep=0pt, minimum width=4pt] at (0,0) {};
                        \node (w1)[label=below:$w_1$][circle, draw, fill=black!50,
                        inner sep=0pt, minimum width=4pt] at (-1,0) {};
                        \node (w2)[label=below:$w_2$][circle, draw, fill=black!50,
                        inner sep=0pt, minimum width=4pt] at (1,0) {};
                        
\path[->] 
    (w0)  edge node[above]{$k$}         (w1);
\path[-]
	(w2)  edge node[above]{}         (w0);
\end{tikzpicture}
\end{center}
and let $\Gamma$ be any graph. Then $G$ contains the right-angled Artin group $A(\Gamma)$ as a subgroup if and only if $\Gamma$ is a forest.  
\end{corollary}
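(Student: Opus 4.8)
The plan is to prove the two implications separately, using one-relator theory for the necessity of ``forest'' and \raag{}-into-\raag{} theory for sufficiency. Throughout write $G = B(P_{3,k}) = \pres{Gp}{a,b,c}{[a,b^k]=1,\ [a,c]=1}$ with $k \geq 2$, and recall from the discussion following Proposition~\ref{Prop:BGamma_1-contains-P4} that $G$ has an index-$k$ subgroup isomorphic to $A(\Gamma_k)$, where $\Gamma_k$ is the tree obtained by wedging $k$ copies of the path $P_3$ at a common leaf. As noted there, $\Gamma_k$ contains $P_4$ as an induced subgraph whenever $k \geq 2$, so that $A(P_4) \leq A(\Gamma_k) \leq G$.

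For the necessity direction, I would first exhibit $G$ as a subgroup of a one-relator group. Combining Proposition~\ref{Prop:Star-BS-contains-P3ell} (applied to the star with $2$ arms and edge-label $k$), which yields $B(P_{3,k}) \leq B(S_{2,k})$, with Theorem~\ref{Thm:[am,bn]-contains-BS-star} (applied with $m=2$, $n=k$), which realises $B(S_{2,k})$ as an index-$2$ subgroup of $G_{2,k} = \pres{Gp}{a,b}{[a^2,b^k]=1}$, produces the chain
\[
B(P_{3,k}) \leq B(S_{2,k}) \leq G_{2,k},
\]
so that $G$ embeds in the one-relator group $G_{2,k}$. Hence if $A(\Gamma) \leq G$, then $A(\Gamma)$ embeds in the one-relator group $G_{2,k}$, and Howie's Lemma~\ref{Lem:Howie'sLemma} forces $\Gamma$ to be a forest. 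This is precisely the step where one-relator theory does the substantive work, and it is what makes the statement interesting despite mentioning no one-relator groups.

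For the sufficiency direction, let $\Gamma$ be a finite forest. I would first reduce to the case of a tree: writing the connected components of $\Gamma$ as $T_1, \dots, T_r$ and adjoining one new vertex joined to a single vertex of each $T_i$ produces a finite tree $T$ in which $\Gamma$ sits as an induced subgraph, so $A(\Gamma) \leq A(T)$. It then suffices to embed $A(T)$ into $G$ for every finite tree $T$. Here I would invoke the extension-graph machinery of Kim \& Koberda \cite{Kim2013}: since $P_4$ is a tree of diameter $3$, its extension graph contains every finite tree as an induced subgraph, and therefore $A(T) \leq A(P_4)$ for every finite tree $T$. Combined with $A(P_4) \leq A(\Gamma_k) \leq G$ from the opening paragraph, this gives $A(\Gamma) \leq A(T) \leq A(P_4) \leq G$, completing the implication.

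The main obstacle is the sufficiency direction, and specifically the input that the single \raag{} $A(P_4)$ already contains every finite forest \raag{}. Unlike the necessity direction, this is not supplied by one-relator theory but by the extension-graph technology for \raags{}. The structurally interesting point, worth emphasising, is that $\Gamma_k$ is \emph{not} a transitive forest (it contains an induced $P_4$), so Droms' rigidity theorem \cite{Droms1987} places no constraint on its \raag{} subgroups; it is exactly this failure of transitivity that lets the finite-index subgroup $A(\Gamma_k)$ absorb all finite forests. The remaining ingredients, namely the explicit index-$k$ Reidemeister--Schreier computation identifying $A(\Gamma_k)$ inside $G$ and the verification of the embedding chain into $G_{2,k}$, are routine given the earlier results, although the index bookkeeping in the chain should be checked with care.
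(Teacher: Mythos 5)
Your proof is correct and takes essentially the same route as the paper: for necessity you embed $G$ into a one-relator group via the chain through a bristled star (Proposition~\ref{Prop:Star-BS-contains-P3ell}) and Theorem~\ref{Thm:[am,bn]-contains-BS-star} and then apply Howie's Lemma~\ref{Lem:Howie'sLemma}, while for sufficiency you combine the Kim--Koberda embedding of all finite-forest \raags{} into $A(P_4)$ with $A(P_4) \leq A(\Gamma_k) \leq G$ from Proposition~\ref{Prop:BGamma_1-contains-P4}, exactly as the paper does. The only cosmetic difference is that you route the necessity chain through $B(S_{2,k}) \leq G_{2,k}$ whereas the paper uses $B(S_{k,k}) \leq G_{k,k}$; both parameter choices are equally valid.
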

\begin{proof}
$(\impliedby)$. Let $\Gamma$ be any finite forest. Then, by \cite[Theorem~1.8]{Kim2013}, $A(\Gamma)$ embeds in $A(P_4)$. But $G$ contains a copy of $A(P_4)$ by Proposition~\ref{Prop:BGamma_1-contains-P4} and the remark following it.

$(\implies)$. Let $\Gamma$ be a graph such that $A(\Gamma)$ embeds in $G$. Now, $G$ embeds in $A(S_{k,k}^{(b)})$, where $S_{k,k}^{(b)}$ is as in Figure~\ref{Fig:BS-star}, as $k \geq 2$ and $P_{3,k}$ is an induced subgraph of $S^{(b)}_{k,k}$. By Proposition~\ref{Prop:Star-BS-contains-P3ell}, $B(S^{(b)}_{k,k}) \leq B(S_{k,k})$, where $S_{k,k}$ is also shown in Figure~\ref{Fig:BS-star}. Now, by Theorem~\ref{Thm:[am,bn]-contains-BS-star}, the group $G_{k,k} = \pres{Gp}{a,b}{[a^k, b^k]=1}$ contains a subgroup isomorphic to $B(S_{k,k})$. Hence we have shown that 
\[
A(\Gamma) \leq G \leq B(S_{k,k}^{(b)}) \leq B(S_{k,k}) \leq G_{k,k}.
\]
As $G_{k,k}$ is a one-relator group, $\Gamma$ is necessarily a finite forest by Howie's lemma (Lemma~\ref{Lem:Howie'sLemma}). 
\end{proof}

In particular, by Corollary~\ref{Cor:P3k-contains-only-good-raags}, the group $B(P_{3,k})$ does not contain any subgroup isomorphic to $F_2 \times F_2$. This lends some credence to the following conjecture:

\begin{conjecture}
The subgroup membership problem is decidable in $B(P_{3,k})$ for every $k \geq 2$. 
\end{conjecture}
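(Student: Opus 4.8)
The plan is to deduce decidability from the virtual right-angled Artin structure that has already been exhibited for these groups. By Proposition~\ref{Prop:BGamma_1-contains-P4} and the remark immediately following it, $B(P_{3,k})$ contains, as an index $k$ subgroup, the \raag{} $A(\Gamma_k)$, where $\Gamma_k$ is the tree obtained by wedging $k$ copies of the path $P_3$ at a common leaf. Since decidability of the subgroup membership problem is inherited both by, and from, finite-index subgroups \cite{Grunschlag1999}, it suffices to prove that $A(\Gamma_k)$ has decidable subgroup membership problem. The whole question thus reduces to a statement purely about the \raag{} on a tree, which is at least consistent with the necessity half of Theorem~\ref{Thm:Lohrey-Steinberg}(1), since a tree is $C_4$-free.

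Next I would exploit the tree structure of $\Gamma_k$ to obtain a graph-of-groups decomposition and reduce to smaller pieces. The central (wedge) vertex $v$ is a cut vertex, so $A(\Gamma_k)$ is the fundamental group of a star-shaped graph of groups: there are $k$ vertex groups, each the arm group $A(P_3)\cong\mathbb{Z}\times F_2$ (the middle vertex of each $P_3$ dominates its path), amalgamated over the common cyclic edge group $\langle v\rangle\cong\mathbb{Z}$, where $v$ is a free generator of the $F_2$-factor of each arm. The vertex groups $\mathbb{Z}\times F_2$ have decidable subgroup membership problem, so the strategy is to run a Stallings-type folding algorithm for graphs of groups (in the style of Kapovich, Weidmann and Miasnikov) and to argue that it terminates. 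A crucial point in our favour, supplied by Corollary~\ref{Cor:P3k-contains-only-good-raags}, is that $A(\Gamma_k)$ contains no copy of $F_2\times F_2=A(C_4)$; this removes the Mihailova obstruction that underlies all known undecidability results for membership in \raags, so there is no \emph{a priori} reason to expect undecidability here.

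The hard part will be proving that the folding process terminates, and this is where I expect the main obstacle. The cyclic edge groups $\langle v\rangle$ are neither finite nor malnormal in the vertex groups $\mathbb{Z}\times F_2$ -- indeed $v$ has centralizer $\langle v\rangle\times\mathbb{Z}$ -- so the acylindricity hypotheses under which the folding machinery is known to terminate fail. The standard remedy, subgroup separability, is also unavailable: by the Metaftsis--Raptis criterion a \raag{} is subgroup separable precisely when its graph is a transitive forest, and $\Gamma_k$ fails this, containing an induced $P_4$ (leaf--middle--$v$--middle$'$). In fact the difficulty is already visible in the base case $A(P_4)\le A(\Gamma_k)$, the first non-separable \raag{} that contains no $F_2\times F_2$, so any successful argument must in particular solve membership there. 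I would therefore try to replace separability by a direct normal-form and quasiconvexity analysis tailored to these specific amalgams -- exploiting that all non-abelian behaviour is confined to the free factors and that each edge group is generated by a single vertex generator -- in order to bound the complexity of the reduced folded graphs and thereby force termination.
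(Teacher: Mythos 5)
This statement is a \emph{conjecture} in the paper, not a theorem: the paper offers no proof of it (the same question is posed as Question~\ref{Question:SGMP-in-certain-rabsags}(1)), only the circumstantial evidence of Corollary~\ref{Cor:P3k-contains-only-good-raags} that $B(P_{3,k})$ contains no copy of $F_2\times F_2$. Your proposal does not close this gap either, and you essentially say so yourself. Your first paragraph is correct: by Proposition~\ref{Prop:BGamma_1-contains-P4} and the remark following it, $B(P_{3,k})$ has an index-$k$ subgroup isomorphic to $A(\Gamma_k)$ with $\Gamma_k$ a tree, and since decidability of subgroup membership passes between a group and a finite-index subgroup \cite{Grunschlag1999}, the conjecture is equivalent to decidability of subgroup membership in $A(\Gamma_k)$. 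But this reduction merely repackages the conjecture inside the class of right-angled Artin groups, where it is already an open problem; none of the difficulty is removed by it.

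The genuine gap is in your final paragraph. Your splitting of $A(\Gamma_k)$ as an amalgam of $k$ copies of $\mathbb{Z}\times F_2$ over the cyclic subgroup $\langle v\rangle$ is correct, and your diagnosis of why the known machinery fails is also correct: $\langle v\rangle$ is far from malnormal in each vertex group (its centralizer is $\langle v\rangle\times\mathbb{Z}$), so the Kapovich--Weidmann--Miasnikov folding procedure is not known to terminate, and $A(\Gamma_k)$ is not subgroup separable by Metaftsis--Raptis, since $\ell_1$--$m_1$--$v$--$m_2$ is an induced $P_4$. Having identified these obstructions, however, you do not overcome them: ``I would therefore try to replace separability by a direct normal-form and quasiconvexity analysis'' is a plan, not an argument, and no decision procedure, no termination proof, and no normal-form lemma is actually supplied. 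As you yourself observe, any completed argument would in particular decide subgroup membership in $A(P_4)$, which is a well-known open problem -- part (1) of Theorem~\ref{Thm:Lohrey-Steinberg} gives only a necessary condition, and the paper's remark after that theorem records that its sharpness is open; the undecidability of the \emph{submonoid} membership problem in $A(P_4)$ shows how delicate this boundary is. What you have written is therefore a sensible research programme, consistent with the paper's evidence for the conjecture, but the conjecture remains unproven.
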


It seems an interesting challenge to classify precisely those right-angled Baumslag-Solitar-Artin groups $B(\Gamma)$ which contain $F_2 \times F_2$ as a subgroup. As proved in Proposition~\ref{Prop:Chordal-RABSAG-contains-C4}, there exist chordal graphs $\Gamma$ for which $F_2 \times F_2$ can nevertheless be embedded in $B(\Gamma)$, unlike the case for right-angled Artin groups (by Kambites' theorem).

\section{Torsion subgroups}\label{Sec:Torsion-subgroups}

\noindent Recall the fact from \S\ref{Subsec:OR-groups} that every finitely generated subgroup of a hyperbolic one-relator group is hyperbolic, and hence has decidable Diophantine problem. In this section, we will exhibit some classes torsion-free one-relator groups which embed into one-relator groups with torsion, thus solving the Diophantine problem in certain torsion-free one-relator groups. Note that it follows from Pride \cite{Pride1977}, every torsion-free two-generator subgroup of a one-relator group with torsion is free. Hence, this method can only give new results in the case of three or more generators. Using this method, in \S\ref{Subsec:Squared-torus-knots}, we give an example of an infinite class of torsion-free one-relator groups with decidable Diophantine problem. Following this, we generalise an example due to B.~B.~Newman (Example~\ref{Ex:Newman}) to find another infinite family of torsion-free one-relator groups with decidable Diophantine problem. The overall aim of this section is to demonstrate the usefulness of this embedding method; the classes obtained here are by no means the only new classes obtainable in this manner.

\subsection{Squared torus knots}\label{Subsec:Squared-torus-knots}

We begin, similarly to \S\ref{Sec:Warmup-torus-knots}, by an example arising from torus knot groups. Let $p, q,k \geq 2$, and let 
\begin{equation}\label{Eq:inflated-torus-knot}
T^{(k)}_{p,q} = \pres{Gp}{a,b}{(a^p b^q)^k = 1}.
\end{equation}
These groups are ``inflated'' versions of the groups $T_{p,q}$ studied in \S\ref{Sec:Warmup-torus-knots}. As $k > 1$, all one-relator groups \eqref{Eq:inflated-torus-knot} have torsion and are hyperbolic. In particular, the groups $T^{(k)}_{p,q}$ all have decidable Diophantine problem for $m, n, k \geq 1$ (the case $k=1$ is dealt with in \S\ref{Sec:Warmup-torus-knots}). 

Despite the simplicity of the presentations of these groups, we can find some interesting, seemingly unrelated, torsion-free one-relator groups inside, even in some restricted cases. Let $m \geq 0$, and let
\begin{equation}\label{Eq:TF-from-squared-torus}
H_m = \pres{Gp}{a,b,c}{(ac^{m+1}a)(bc^mb) = 1}.
\end{equation}
Then $H_m$ is a torsion-free one-relator group.

\begin{proposition}\label{Prop:TF-in-squared-torus}
Let $m \geq 1$. Then $T_{2,2m+1}^{(2)}$ has an index $2$ subgroup isomorphic to $H_m$. 
\end{proposition}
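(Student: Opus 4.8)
The plan is to realise $H_m$ as the kernel of a surjection $T^{(2)}_{2,2m+1}\to C_2$ and to compute a presentation of this kernel by the Reidemeister--Schreier method, exactly as in the proofs of Proposition~\ref{Prop:Torus-knot-virtually-is-Fn-Z} and Theorem~\ref{Thm:[am,bn]-contains-BS-star}. Write $G = T^{(2)}_{2,2m+1} = \pres{Gp}{a,b}{(a^2b^{2m+1})^2 = 1}$. Since the exponent sums of $a$ and of $b$ in the relator $R = (a^2b^{2m+1})^2$ are both even, the assignment $a \mapsto 0$, $b \mapsto 1$ extends to a surjective homomorphism $\phi\colon G \to C_2$; I would take $H = \ker\phi$, an index-$2$ (hence normal) subgroup. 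A Schreier transversal for $H$ is $T = \{1, b\}$. First I would compute the Schreier generators $s(t,x) = tx\overline{tx}^{-1}$: one checks that $s(1,b)$ is trivial and that the remaining generators are $\alpha_0 = a$, $\alpha_1 = bab^{-1}$ and $\beta = b^2$, so that $H$ is generated by three elements, matching the three generators of $H_m$.

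Next I would rewrite the two relators $\tau(R)$ and $\tau(bRb^{-1})$. Tracking cosets through $R = a^2b^{2m+1}a^2b^{2m+1}$, using that each letter $b$ toggles the coset while $a$ fixes it, a routine bookkeeping gives
\[
\tau(R) = \alpha_0^2\,\beta^m\,\alpha_1^2\,\beta^{m+1},
\]
while $\tau(bRb^{-1})$ comes out as the cyclic conjugate $\alpha_1^2\beta^{m+1}\alpha_0^2\beta^m$, hence is redundant (in a group $uv=1$ iff $vu=1$). Thus
\[
H \cong \pres{Gp}{\alpha_0, \alpha_1, \beta}{\alpha_0^2\beta^m\alpha_1^2\beta^{m+1} = 1}.
\]

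The main work, and the step I expect to be the real obstacle, is then to identify this presentation with that of $H_m$: the two relators are genuinely distinct cyclic words (in $H$ the two squared generators are adjacent, whereas in $H_m$ the two occurrences of $a$ are separated by $c^{m+1}$), so no relabelling alone suffices. I would resolve this by a change of generators. Applying the automorphism of the free group on $\{\alpha_0,\alpha_1,\beta\}$ given by $\alpha_1 \mapsto \beta^{-m}\alpha_1\beta^{m}$ (fixing $\alpha_0,\beta$) sends the relator to the balanced word $\alpha_0^2\alpha_1^2\beta^{2m+1}$, so $H \cong \pres{Gp}{x,y,z}{x^2y^2z^{2m+1}=1}$. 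Symmetrically, starting from $H_m = \pres{Gp}{a,b,c}{ac^{m+1}abc^mb = 1}$, the successive substitutions $b \mapsto c^{-m}b$, then $a \mapsto ac^{-(m+1)}$, then $c \mapsto c^{-1}$, then $b\mapsto c^{-(2m+1)}bc^{2m+1}$ (each an automorphism of the free group on $\{a,b,c\}$) carry the relator to $a^2b^2c^{2m+1}$.

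Hence both groups are isomorphic to $\pres{Gp}{x,y,z}{x^2y^2z^{2m+1}=1}$, an automorphism of the free group carrying one defining relator to the other inducing the isomorphism of one-relator groups; this proves $H \cong H_m$ and, with $[G:H]=2$, completes the argument. The only points requiring genuine care are verifying that each displayed substitution is indeed a free-group automorphism and tracking the exponents of the intermediate relators correctly through the cancellations; once the balanced form $x^2y^2z^{2m+1}$ is reached on both sides, the identification is immediate.
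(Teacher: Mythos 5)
Your proof is correct, but it takes a genuinely different route from the paper's. First, you use a different index-$2$ subgroup: your $H=\ker(\phi)$ is the kernel of the $b$-exponent parity map (it contains $a$), whereas the paper takes the subgroup $\langle a^2, ab, ab^{-1}\rangle$, the kernel of the \emph{total} exponent parity map (which does not contain $a$); these are distinct subgroups of $T^{(2)}_{2,2m+1}$, and both happen to be isomorphic to $H_m$, so either choice proves the statement. Second, your identification step differs: the paper's transversal $\{1,a\}$ produces the messier relator $\gamma(\alpha\beta)^m\alpha\gamma\beta(\alpha\beta)^m$ and then applies a single free-group automorphism ($\alpha\mapsto\alpha\beta^{-1}$, $\gamma\mapsto\gamma\beta$) landing directly on the $H_m$ relator up to cyclic shift; your transversal $\{1,b\}$ yields the cleaner relator $\alpha_0^2\beta^m\alpha_1^2\beta^{m+1}$, and you then normalise \emph{both} presentations to the common balanced form $\pres{Gp}{x,y,z}{x^2y^2z^{2m+1}=1}$ by chains of Nielsen-type automorphisms (each of which I checked is indeed an automorphism, and your exponent bookkeeping through the cancellations is correct). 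What your approach buys is a more transparent endpoint: it exhibits $H_m$ itself, and the index-$2$ subgroup, as the pleasantly symmetric one-relator group with relator $x^2y^2z^{2m+1}$, making the isomorphism immediate rather than requiring one to spot the right single automorphism; the cost is the extra (routine) verification that four successive substitutions are automorphisms. Both arguments are Reidemeister--Schreier computations of the same flavour, and both are complete proofs of the proposition.
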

\begin{proof}
Let $H = \langle a^2, ab, ab^{-1} \rangle$. Clearly, $H$ is normal, as can be seen e.g. by constructing the Stallings graph for the subgroup of the free group $F(a,b)$ generated by the same generating set. This graph has two vertices; in particular $H$ has index $2$ in $G$. A Schreier transversal for $H$ in $G$ is given by $\{ 1, a \}$. As usual, we find the Schreier generators, which in this case become $ba^{-1}, ab, a^2$. Call these $\alpha, \beta$, resp. $\gamma$. Then, using a Reidemeister--Schreier rewriting $\tau$ to rewrite the relator of $T_{2,2m+1}^{(2)}$ and its conjugate by $a$, we find:
\begin{align*}
\tau\left(a^2b^{2m+1}a^2b^{2m+1}\right) &\equiv \tau\left((a^2) (b^2)^{m} b a a b (b^2)^{m}\right) \\ &\equiv \tau\left((a^2) (ba^{-1}ab)^{m} (ba^{-1})(a^2) (ab) (ba^{-1} ab)^{m}\right) \\
&= \gamma (\alpha\beta)^m \alpha \gamma \beta (\alpha \beta)^m =: R_1 \\
\tau\left(aa^2b^{2m+1}a^2b^{2m+1}a^{-1}\right) &\equiv \tau\left((a^2)(ab) (b^2)^m (a^2) (b^2)^m ba^{-1} \right) \\
&= \gamma \beta (\alpha\beta)^m \gamma (\alpha\beta)^m \alpha =: R_2.
\end{align*}
As $R_1$ is a cyclic conjugate of $R_2$, $H$ is a one-relator group:
\[
H \cong \pres{Gp}{\alpha, \beta, \gamma}{\gamma (\alpha\beta)^m \alpha \gamma \beta (\alpha \beta)^m = 1}.
\]
The automorphism of the free group on $\alpha, \beta, \gamma$ defined by $\alpha \mapsto \alpha \beta^{-1}, \beta \mapsto \beta$, and $\gamma \mapsto \gamma \beta$ gives a new presentation for $H$ as 
\[
H \cong \pres{Gp}{\alpha, \beta, \gamma}{\beta \gamma \alpha^{m+1} \gamma \beta \alpha^m = 1}.
\]
By a cyclic shift of the relator, and relabelling the generators, this is \eqref{Eq:TF-from-squared-torus}, so $H \cong H_m$.
\end{proof}

\begin{corollary}\label{Cor:acm+1a=bcmb_dio}
The Diophantine problem is decidable in the torsion-free one-relator group
\[
H_m = \pres{Gp}{a,b,c}{(ac^{m+1}a)(bc^mb) = 1}
\]
for every $m \geq 0$.
\end{corollary}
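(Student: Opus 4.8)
The plan is to split into the two cases $m \geq 1$ and $m = 0$, reducing each to a class of groups whose Diophantine problem is already known to be decidable.

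For $m \geq 1$, I would invoke the immediately preceding Proposition~\ref{Prop:TF-in-squared-torus}, which realises $H_m$ as an index $2$ subgroup of $T_{2,2m+1}^{(2)} = \pres{Gp}{a,b}{(a^2 b^{2m+1})^2 = 1}$. The crucial observation is that the defining relator $(a^2b^{2m+1})^2$ is a proper power of the nonempty cyclically reduced word $a^2 b^{2m+1}$, so $T_{2,2m+1}^{(2)}$ is a one-relator group \emph{with torsion}; by the Spelling Theorem it is therefore hyperbolic, as already remarked after \eqref{Eq:inflated-torus-knot}. Since $H_m$ has finite index it is finitely generated (indeed finitely presented, via Reidemeister--Schreier), so Corollary~\ref{Cor:f.g.-ORwt-is-hyperbolic} applies verbatim and yields decidability of the Diophantine problem in $H_m$. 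Equivalently, a finite-index subgroup of a hyperbolic group is itself hyperbolic, so one may instead quote Dahmani \& Guirardel \cite{Dahmani2010} directly, or appeal to Levine's theorem \cite{Levine2021} that decidability is inherited under finite extensions.

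The remaining case $m = 0$ must be handled separately, since Proposition~\ref{Prop:TF-in-squared-torus} is stated only for $m \geq 1$. Here the defining relator specialises to $(ac^1a)(bc^0b) = acab^2$, so that
\[
H_0 = \pres{Gp}{a,b,c}{acab^2 = 1}.
\]
The generator $c$ occurs exactly once in this relator, whence the relation can be solved as $c = a^{-1}b^{-2}a^{-1}$ and $c$ eliminated by a Tietze transformation; this gives $H_0 \cong \pres{Gp}{a,b}{} = F_2$, a free group of rank two. The Diophantine problem is decidable in free groups by Makanin \cite{Makanin1982}, completing this case.

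I do not anticipate a genuine obstacle: the substance has already been absorbed into Proposition~\ref{Prop:TF-in-squared-torus} together with the hyperbolicity of one-relator groups with torsion, so the corollary is essentially a two-line deduction. The only point requiring care is the degenerate case $m = 0$, where the factor $bc^mb$ collapses to $b^2$ and the torus-knot embedding is unavailable; recognising that $H_0$ is then free (because $c$ is a once-occurring generator) is exactly what makes the statement valid all the way down to $m = 0$.
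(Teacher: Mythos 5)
Your proof is correct and takes essentially the same route as the paper: for $m \geq 1$ the corollary is exactly the combination of Proposition~\ref{Prop:TF-in-squared-torus} with the hyperbolicity of the one-relator group with torsion $T^{(2)}_{2,2m+1}$ (so that its finite-index subgroup $H_m$ is hyperbolic and Dahmani--Guirardel, equivalently Corollary~\ref{Cor:f.g.-ORwt-is-hyperbolic}, applies). Your separate treatment of the degenerate case $m=0$ --- observing that $c$ occurs exactly once in the relator $acab^2$, so Tietze elimination gives $H_0 \cong F_2$ and Makanin's theorem finishes --- is correct and in fact fills in a case the paper leaves implicit, since Proposition~\ref{Prop:TF-in-squared-torus} is stated only for $m \geq 1$ while the corollary claims all $m \geq 0$.
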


We remark that as the groups $H_m$ are positive one-relator groups, they are residually solvable \cite{Baumslag1971}. It would be interesting to know whether the Diophantine problem is decidable in all positive one-relator groups; this can be related to the Diophantine problem in certain one-relation monoids \cite{Garreta2021b}. As $H_m$ is virtually a one-relator group with torsion, it is also coherent \cite{Louder2020}.

\begin{remark}
The groups $\pres{Gp}{a,b}{(a^m b^n)^t = 1}$ were studied by Baumslag \cite[Lemma~1]{Baumslag1967} in a Bulletin article, who proved (or, rather, claimed) that these are all virtually residually free. The author is not aware of any published article where the proof of this fact appears. 
\end{remark}

\subsection{Newman One-relator Groups}\label{Subsec-newman-groups}

As mentioned in \S\ref{Subsec:OR-groups}, every $2$-generator subgroup of a one-relator group with torsion is either free or has torsion. B. B. Newman and S. Pride ``once thought'' (\cite[p. 483]{Pride1977}) that \textit{every} finitely generated subgroup of a one-relator group with torsion will either be free or have torsion. However, Pride reports on a counterexample to this statement, discovered by Newman. In fact, the example appears already in Newman's Ph.D. thesis (see \cite{NybergBrodda2021a} for the story of how this thesis was uncovered):

\begin{example}[{B. B. Newman, \cite[p. 72]{Newman1968}}]\label{Ex:Newman}
Let $G = \pres{Gp}{x,y}{(yxy^{-1}x^{-2})^2 = 1}$. Then the subgroup $H = \langle y, x^2, xyx^{-1} \rangle$ has index $2$ in $G$, and 
\begin{equation}\label{Eq:Newman'sGroup}
H \cong \pres{Gp}{a,b,c}{a^b a^c = a},
\end{equation}
where we use the convention that $a^b = bab^{-1}$. In particular, as $H$ is hyperbolic, being a finite index subgroup of a hyperbolic group (or applying Corollary~\ref{Cor:f.g.-ORwt-is-hyperbolic}), the Diophantine problem is decidable in the torsion-free one-relator group $\pres{Gp}{a,b,c}{a^ba^c = a}$. We remark that $a$ is obviously contained in every term of the lower central series of $H$, and hence $H$ cannot be residually nilpotent; hence, in particular, $H$ is not a free group \cite{Magnus1935}.
\end{example}

We will now, in various ways, generalise this example to find more torsion-free one-relator groups with decidable Diophantine problem. We define two new families of one-relator groups. Every group in the first family has torsion; in the second, every group is torsion-free. Let $k \geq 1$, and $m, n \in \mathbb{Z}$. Then the group
\begin{equation}\label{Eq:BSkmn-def-inflated}
\BS^{(k)}(m,n) = \pres{Gp}{a,b}{(ba^m b^{-1}a^{-n})^k = 1}
\end{equation}
will be called a \textit{$k$-fold inflated Baumslag-Solitar group}. Of course, $\BS^{(1)}(m,n) = \BS(m,n)$. If $k>1$, then $\BS^{(k)}(m,n)$ is a one-relator group with torsion, and in particular is hyperbolic. 

The second family of groups are defined analogous to B. B. Newman's group \eqref{Eq:Newman'sGroup}. Let $p,q \geq 1$ be integers. Then the group 
\begin{equation}\label{Eq:NewmanPridegroup-def}
\NP(p,q) = \pres{Gp}{y, x_1, x_2, \dots, x_p}{y = (y^q)^{x_1} (y^q)^{x_2} \cdots (y^q)^{x_p}}
\end{equation}
will be called a \textit{Newman group}. The class of Newman groups generalise the solvable Baumslag-Solitar groups $\BS(1,n)$, as 
\[
\NP(1, q) = \pres{Gp}{y, x_1}{y = (y^q)^{x_1}} \cong \BS(1, q). 
\]
Of course, $\NP(p,q)$ is not solvable by $p>1$, as by the \textit{Freiheitssatz} such groups contain free subgroups of rank $2$ (indeed, $\NP(p,q)$ virtually surjects $F_2$ when $p>1$, by \cite{Baumslag1978}).

\begin{proposition}\label{Prop:Newman-group-in-inflated}
For every $p \geq 2$ and $q \geq 1$, the Newman group $\NP(p, q)$ is isomorphic to an index $p$ normal subgroup of the inflated Baumslag-Solitar group $\BS^{(p)}(1,pq)$.
\end{proposition}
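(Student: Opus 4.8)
The plan is to realise $\NP(p,q)$ as the kernel $H$ of the homomorphism $\varphi\colon \BS^{(p)}(1,pq) \to C_p$ sending $a \mapsto t$ (a generator of $C_p$) and $b \mapsto 1$, and then to identify $H$ explicitly by the Reidemeister--Schreier method of \S\ref{Subsec:Reidemeister}. First I would check that $\varphi$ is well defined: writing $R = (bab^{-1}a^{-pq})^p$, the exponent sum of $a$ in $R$ is $p(1-pq)$, so $\varphi(R) = t^{p(1-pq)} = 1$. Hence $H = \ker\varphi$ is normal of index $p$, and (exactly as in Theorem~\ref{Thm:[am,bn]-contains-BS-star}) it is generated by $\alpha := a^p$ together with $\beta_i := a^i b a^{-i}$ for $0 \le i \le p-1$; a Schreier transversal is $T = \{1, a, \dots, a^{p-1}\}$, with $\overline{a^i a} = a^{i+1 \bmod p}$ and $\overline{a^i b} = a^i$ since $b \in H$.

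The substance of the proof is to rewrite the defining relators $\tau(a^i R a^{-i})$ for $0 \le i \le p-1$. I would track the coset ``state'' $a^j$ while reading a single factor $w = bab^{-1}a^{-pq}$: reading $b$ emits $\beta_j$ and fixes the state; reading the lone $a$ emits $s(a^j,a)$, which equals $\alpha$ precisely when $j = p-1$ and $1$ otherwise, advancing the state to $j+1 \bmod p$; and crucially, reading the whole block $a^{-pq}$ emits $\alpha^{-q}$ and returns the state to its starting value, since $pq \equiv 0 \pmod p$ and each of the $q$ passes through the boundary $0 \mapsto p-1$ contributes one $\alpha^{-1}$. Thus reading $w$ from state $j$ emits $\beta_j \cdot [\alpha \text{ if } j=p-1] \cdot \beta_{j+1 \bmod p}^{-1}\cdot \alpha^{-q}$ and advances the state by one. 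Reading $w^p$ from state $i$ therefore produces the product of the $p$ blocks indexed by $j = i, i+1, \dots, i+p-1 \pmod p$; in particular the relator $R_i := \tau(a^i R a^{-i})$ is literally a cyclic permutation of $R_0$, hence a conjugate of it, so all the relators with $i \ge 1$ are redundant and $H$ is a one-relator group on $\alpha,\beta_0,\dots,\beta_{p-1}$ with the single relator
\[
R_0 = \beta_0\beta_1^{-1}\alpha^{-q}\,\beta_1\beta_2^{-1}\alpha^{-q}\cdots\beta_{p-2}\beta_{p-1}^{-1}\alpha^{-q}\,\beta_{p-1}\alpha\beta_0^{-1}\alpha^{-q}.
\]

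To finish I would exploit the telescoping hidden in $R_0$. Passing to the cyclic conjugate that begins with the unique positive $\alpha$ and regrouping yields
\[
R_0 \;\sim\; \alpha\prod_{j=0}^{p-1}\beta_j^{-1}\alpha^{-q}\beta_j,
\]
so the relation $R_0 = 1$ is equivalent, after inverting, to $\beta_{p-1}^{-1}\alpha^q\beta_{p-1}\cdots\beta_0^{-1}\alpha^q\beta_0 = \alpha$. Setting $y = \alpha$ and $x_i = \beta_{p-i}^{-1}$ (a relabelling of the free generators, hence an automorphism of the ambient free group) turns this into $y = (y^q)^{x_1}(y^q)^{x_2}\cdots(y^q)^{x_p}$, which is exactly the defining relation of $\NP(p,q)$. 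This gives the required isomorphism $H \cong \NP(p,q)$, completing the proof. The main obstacle is the bookkeeping in the rewriting step --- in particular recognising that the block $a^{-pq}$ collapses to $\alpha^{-q}$ without disturbing the coset state, and that the $p$ conjugate relators are cyclic rotations of one another; once the single relator $R_0$ is in hand, the telescoping and the relabelling are routine.
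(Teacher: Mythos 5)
Your proposal is correct and takes essentially the same route as the paper: you work with the same index-$p$ normal subgroup (the paper's $H = \langle a^p, a^i b a^{-i}\rangle$, which you realise as $\ker\varphi$), the same Schreier transversal $\{1, a, \dots, a^{p-1}\}$, and the same Reidemeister--Schreier rewriting, and your relator $R_0$ is exactly the paper's $\tau(R^p)$, with the other conjugate relators discarded as cyclic conjugates just as in the paper. Your final relabelling $y = \alpha$, $x_i = \beta_{p-i}^{-1}$ (after inverting the relation) is only a cosmetic variant of the paper's free-group automorphism $\alpha \mapsto \alpha^{-1}$, $\beta_i \mapsto \beta_i^{-1}$ followed by an index shift.
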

\begin{proof}
We will let $G = \BS^{(p)}(1,pq)$, with presentation as in \eqref{Eq:BSkmn-def-inflated}, and let $A =\{ a, b\}$. If $p=1$,
\[
\BS^{(p)}(1,pq) = \BS(1, q) \cong \pres{Gp}{x_1, x_2}{x_1 = (x_1^q)^{x_2}} = \NP(1, q) = \NP(1,pq),
\]
as required. Suppose therefore $p>1$. Let $H = \langle a^p, a^i b a^{-i} \: (i \in \mathbb{Z} \rangle$. As is easily verified, $H$ is normal in $G$. Furthermore, 
\begin{align*}
G / H &\cong \pres{Gp}{a,b}{(bab^{-1}a^{-pq})^p = 1, a^p = 1, a^i ba^{-i} = 1 \: (0 \leq i \leq p-1)} \\
&= \pres{Gp}{a,b}{b a^pb^{-1} = 1, a^p = 1, b=1} = \pres{Gp}{a}{a^p = 1} \cong C_p.
\end{align*}
In particular $H$ has index $p$ in $G$. Thus, it suffices to prove $H \cong \NP(p, q)$. 

We find a presentation for $H$ by the Reidemeister--Schreier method. By the presentation for $G/H$ above, a Schreier transversal for $H$ in $G$ is $T = \{ 1, a, \dots, a^{p-1} \}$. As usual, the Schreier generators $tx \overline{tx}^{-1}$ for $t \in T$ and $x \in A$ can be found to be $a^p$ and $b^i a b^{-i}$ for $0 \leq i \leq p-1$. Denote these by $\alpha$ resp. $\beta_i$. Let $R = a^i(bab^{-1}a^{-pq})a^{-i}$. We will now rewrite the conjugates of $R$ over these generators. This is straightforward; indeed, note that if $p=i+1$ then 
\begin{equation}\label{Eq:single-step-of-newman}
a^i R a^{-i} \equiv a^i bab^{-1}a^{-pq} a^{-i} = \underbrace{(a^iba^{-i})}_{\beta_i}\underbrace{(a^{i+1}b^{-1}a^{-(i+1)}}_{\beta_{i+1}^{-1}})\underbrace{(a^p)^{-q}}_{\alpha^{-q}} a
\end{equation}
and hence, starting for simplicity with the particular case that $i=0$, we have
\begin{align}\label{Eq:p-1-step-of-newman-i=0}
R^{p} \equiv R^{p-1} \cdot R &= \prod_{j=0}^{p-2}\left( \underbrace{(a^jba^{-j})}_{\beta_j}\underbrace{(a^{j+1}b^{-1}a^{-(j+1)}}_{\beta_{j+1}^{-1}})\underbrace{(a^p)^{-q}}_{\alpha^{-q}} \right) a^{p-1} \cdot bab^{-1} a^{-pq} \nonumber \\
&= \prod_{j=0}^{p-2}\left( \underbrace{(a^jba^{-j})}_{\beta_j}\underbrace{(a^{j+1}b^{-1}a^{-(j+1)}}_{\beta_{j+1}^{-1}})\underbrace{(a^p)^{-q}}_{\alpha^{-q}} \right) \underbrace{(a^{p-1}ba^{-(p-1)})}_{\beta_{p-1}} \underbrace{a^p}_{\alpha} \underbrace{(b^{-1})}_{\beta_0^{-1}} \underbrace{a^{-pq}}_{\alpha^{-q}},
\end{align}
where all equalities are in the free group on $A$. The right-hand side of \eqref{Eq:p-1-step-of-newman-i=0} is now a word over $\alpha$ and the $\beta_i$, so we find
\begin{equation}\label{Eq:newman-beta-defining-rewritten}
\tau(R^p) \equiv \prod_{j=0}^{p-2} \left( \beta_j \beta_{j+1}^{-1} \alpha^{-q} \right) \beta_{p-1} \alpha \beta_0^{-1} \alpha^{-q}.
\end{equation}
The reader may now readily verify, in exactly the same manner as above, that $\tau(a^i R^p a^{-i})$ is a cyclic conjugate of \eqref{Eq:newman-beta-defining-rewritten}; indeed, the conjugation will shift the indices on the $\beta_j$ to $\beta_{j+i}$, taken mod $p$, but the single $\alpha$ will still appear between the $\beta_{p-1}$ and the $\beta_0^{-1}$.

 In particular, $H$ is a one-relator group with the single defining relation $\tau(R^p)$. A cyclic conjugate of $\tau(R^p)$, putting it in a more readable form, is:
\[
w \equiv (\beta_0^{-1} \alpha^{-q} \beta_0)(\beta_1^{-1} \alpha^{-q} \beta_1) (\beta_2^{-1} \cdots )(\beta_{p-1}^{-1} \alpha^{-q} \beta_{p-1}) \alpha.
\]
Thus 
\begin{align*}
H &\cong \pres{Gp}{\alpha, \beta_0, \dots, \beta_{p-1}}{w = 1} \cong \pres{Gp}{\alpha, \beta_1, \dots, \beta_p}{\alpha = (\alpha^q)^{\beta_1} \cdots (\alpha^q)^{\beta_p}} \cong \NP(p,q)
\end{align*}
where, for the second isomorphism, we used the free group automorphism induced by $\alpha \mapsto -\alpha$, $\beta_i \mapsto \beta_i^{-1}$, shifting the indices of the $\beta_i$ up by one. Thus $H \cong \NP(p,q)$ is an index $p$ normal subgroup of $G$, as desired. 
\end{proof}

We now conclude that $\NP(p,q)$ has decidable Diophantine problem; indeed, if $p=1$ then the result is known \cite{Kharlampovich2020}). If $p>1$, then the result follows (e.g.) from Corollary~\ref{Cor:f.g.-ORwt-is-hyperbolic}. Alternatively, we can use the recently proved result that decidability of the Diophantine problem is inherited by finite index subgroups \cite{Levine2021}. Either way, we conclude:

\begin{corollary}\label{Cor:NP-have-dec-DP}
For every $p, q \geq 1$, the Diophantine problem is decidable in the group
\[
\NP(p,q) = \pres{Gp}{y, x_1, \dots, x_p}{y = (y^q)^{x_1} (y^q)^{x_2} \cdots (y^q)^{x_p}}.
\]
\end{corollary}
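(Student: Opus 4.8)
The plan is to reduce the statement to earlier results already established in the paper, rather than to give a self-contained argument. The key observation is that Corollary~\ref{Cor:NP-have-dec-DP} is an immediate consequence of Proposition~\ref{Prop:Newman-group-in-inflated} combined with the general machinery set up in \S\ref{Subsec:OR-groups}. So the entire proof is a case split on the value of $p$, followed by an appeal to the right black-box result in each case.

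\begin{proof}
We distinguish two cases according to the value of $p$.

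First suppose $p = 1$. Then, as noted in the definition of the Newman groups, $\NP(1,q) \cong \BS(1,q)$, which is the solvable Baumslag--Solitar group with $m = 1$. The Diophantine problem is decidable in all such groups by the result of Kharlampovich, L\'{o}pez \& Myasnikov \cite{Kharlampovich2020}. This settles the case $p = 1$.

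Now suppose $p \geq 2$. By Proposition~\ref{Prop:Newman-group-in-inflated}, the group $\NP(p,q)$ is isomorphic to an index $p$ (in particular, finite index) subgroup of the inflated Baumslag--Solitar group $\BS^{(p)}(1,pq)$. Since $p \geq 2$, the relator $(ba^mb^{-1}a^{-n})^p$ defining $\BS^{(p)}(1,pq)$ is a proper power of a non-empty cyclically reduced word, and hence $\BS^{(p)}(1,pq)$ is a one-relator group with torsion. By the B.~B.~Newman Spelling Theorem \cite{Newman1968, NybergBrodda2021a}, every one-relator group with torsion is hyperbolic, so $\BS^{(p)}(1,pq)$ is hyperbolic. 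A finite index subgroup of a hyperbolic group is itself hyperbolic, so $\NP(p,q)$ is hyperbolic. The Diophantine problem is decidable in all hyperbolic groups by Dahmani \& Guirardel \cite{Dahmani2010}, and hence it is decidable in $\NP(p,q)$. (Alternatively, one may invoke Corollary~\ref{Cor:f.g.-ORwt-is-hyperbolic} directly, since $\NP(p,q)$ is a finitely generated --- indeed finite index --- subgroup of the one-relator group with torsion $\BS^{(p)}(1,pq)$; or one may appeal to the result of Levine \cite{Levine2021} that decidability of the Diophantine problem is inherited by finite index subgroups, applied to the hyperbolic group $\BS^{(p)}(1,pq)$.)

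In either case, the Diophantine problem is decidable in $\NP(p,q)$, as claimed.
\end{proof}

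**The main point** of the argument is entirely encapsulated in Proposition~\ref{Prop:Newman-group-in-inflated}: once $\NP(p,q)$ has been realised as a finite index subgroup of a one-relator group with torsion, decidability follows mechanically from hyperbolicity (via the Spelling Theorem) and the Dahmani--Guirardel theorem. There is no genuine obstacle at the level of the corollary itself; all the real work --- the explicit Reidemeister--Schreier computation identifying the rewritten relator $\tau(R^p)$ with the defining relator of $\NP(p,q)$ --- has already been carried out in the proof of Proposition~\ref{Prop:Newman-group-in-inflated}. The only minor care needed is to handle the degenerate case $p = 1$ separately, since there $\BS^{(1)}(1,q) = \BS(1,q)$ is torsion-free and the hyperbolicity argument does not apply; this is why the paper cites \cite{Kharlampovich2020} specifically for the solvable case.
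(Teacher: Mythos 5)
Your proposal is correct and follows essentially the same route as the paper: the case $p=1$ is dispatched via $\NP(1,q) \cong \BS(1,q)$ and \cite{Kharlampovich2020}, and the case $p \geq 2$ combines Proposition~\ref{Prop:Newman-group-in-inflated} with the fact that $\NP(p,q)$ is then a finite index subgroup of the hyperbolic one-relator group with torsion $\BS^{(p)}(1,pq)$, exactly as the paper does via Corollary~\ref{Cor:f.g.-ORwt-is-hyperbolic} (with Levine's finite-index result \cite{Levine2021} noted as an alternative in both). Your version merely unpacks the hyperbolicity argument (Spelling Theorem plus Dahmani--Guirardel) that the paper's corollary encapsulates.
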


It would be interesting to know whether the subgroup and/or the submonoid membership problems in $\NP(p,q)$ are decidable.

{
\bibliography{BalancedBaumslag.bib} 
\bibliographystyle{plain}
}
 \end{document}